\documentclass[a4paper,12pt]{article}

\usepackage[left=2cm,right=2cm, top=2cm,bottom=3cm,bindingoffset=0cm]{geometry}

\usepackage{verbatim}
\usepackage{amsmath}
\usepackage{amsthm}
\usepackage{amssymb}
\usepackage{delarray}
\usepackage{cite}
\usepackage{hyperref}
\usepackage{mathrsfs}
\usepackage{tikz}
\usetikzlibrary{patterns}
\usepackage{caption}
\DeclareCaptionLabelSeparator{dot}{. }
\captionsetup{justification=centering,labelsep=dot}

\newcommand{\al}{\alpha}
\newcommand{\be}{\beta}
\newcommand{\ga}{\gamma}
\newcommand{\de}{\delta}
\newcommand{\la}{\lambda}
\newcommand{\om}{\omega}

\newcommand{\eps}{\varepsilon}
\newcommand{\vv}{\varphi}
\newcommand{\iy}{\infty}

\theoremstyle{plain}

\numberwithin{equation}{section}

\newtheorem{thm}{Theorem}[section]
\newtheorem{lem}[thm]{Lemma}
\newtheorem{prop}[thm]{Proposition}
\newtheorem{cor}[thm]{Corollary}

\theoremstyle{definition}

\newtheorem{alg}[thm]{Algorithm}

\newtheorem{prob}[thm]{Problem}
\newtheorem{df}[thm]{Definition}

\theoremstyle{remark}

\newtheorem{remark}[thm]{Remark}

\DeclareMathOperator*{\Res}{Res}

\DeclareMathOperator{\diag}{diag}

\sloppy \allowdisplaybreaks

\begin{document}

\begin{center}
{\Large\bf Reconstruction of higher-order differential operators  \\[0.2cm] by their spectral data}
\\[0.5cm]
{\bf Natalia P. Bondarenko}
\end{center}

\vspace{0.5cm}

{\bf Abstract.} This paper is concerned with inverse spectral problems for higher-order ($n > 2$) ordinary differential operators. We develop an approach to the reconstruction from the spectral data for a wide range of differential operators with either regular or distribution coefficients.
Our approach is based on the reduction of an inverse problem to a linear equation in the Banach space of bounded infinite sequences. This equation is derived in a general form that can be applied to various classes of differential operators. The unique solvability of the linear main equation is also proved. By using the solution of the main equation, we derive reconstruction formulas for the differential expression coefficients in the form of series and prove the convergence of these series for several classes of operators. The results of this paper can be used for constructive solution of inverse spectral problems and for investigation of their solvability and stability.

\medskip

{\bf Keywords:} inverse spectral problems; higher-order differential operators; distribution coefficients; constructive solution; method of spectral mappings. 

\medskip

{\bf AMS Mathematics Subject Classification (2020):} 34A55 34B09 34B05 34E05 46F10  

\vspace{1cm}

\section{Introduction} \label{sec:intr}

This paper is concerned with the inverse spectral theory for operators generated by the differential expression
\begin{align} \nonumber
\ell_n(y) := & y^{(n)} + \sum_{k = 0}^{\lfloor n/2\rfloor - 1} (\tau_{2k}(x) y^{(k)})^{(k)} \\ \label{defl} + & \sum_{k = 0}^{\lfloor (n-1)/2\rfloor - 1}  \bigl((\tau_{2k+1}(x) y^{(k)})^{(k+1)} + (\tau_{2k+1}(x) y^{(k+1)})^{(k)}\bigr), \: x \in (0,1),
\end{align}
where the notation $\lfloor a \rfloor$ means rounding down, the functions $\{ \tau_{\nu} \}_{\nu = 0}^{n-2}$ can be either integrable or distributional. Various aspects of spectral theory for such operators and related issues have been intensively studied in recent years (see, e.g., \cite{MS16, SS20, KM19, BH13, Pap16, BSHZ19, PB20, GN21, Bond21}). 
However, the general theory of inverse spectral problems for \eqref{defl} with arbitrary $n > 2$ has not been created yet. This paper aims to develop an approach to the reconstruction of the coefficients $\{ \tau_{\nu} \}_{\nu = 0}^{n-2}$ from the spectral data for a wide class of differential operators.

\subsection{Historical background}

% applications
Inverse problems of spectral analysis consist in the recovery of differential operators from their spectral information. Such problems arise in practice when one needs to determine certain physical parameters of a system from some measured data or to construct a model with desired properties. The majority of physical applications are concerned with linear differential operators of form \eqref{defl} with $n = 2, 3, 4$.

For $n = 2$, expression \eqref{defl} turns into the Sturm-Liouville (Schr\"odinger) operator
\begin{equation} \label{StL}
-\ell_2(y) = -y'' + q(x) y,
\end{equation}
which models string vibrations in classical mechanics, electron motion in quantum mechanics, and is
widely used in other branches of science and engineering. The third-order linear differential operators arise
in the inverse problem method for integration of the nonlinear Boussinesq equation (see \cite{DTT82, McK81}),
in mechanical problems of modeling thin membrane flow of viscous liquid and elastic beam
vibrations (see \cite{BP19} and references therein). Inverse spectral problems for the fourth-order linear differential operators attract much attention of scholars because of applications in mechanics and geophysics (see \cite{Barc74, McL76, PK97, CPS98,  Glad05, Mor15, BK15, JLX22} and references therein).

The classical results of the inverse problem theory have been obtained for the Sturm-Liouville operator \eqref{StL} with integrable potential $q(x)$
in 1950th by Marchenko, Levitan, and their followers (see \cite{Mar77, Lev84}). They have developed the transformation operator method, which reduces the nonlinear inverse Sturm-Liouville spectral problem to the linear Fredholm integral equation of the second kind. However, the transformation operator method appeared to be ineffective for the higher-order differential operators
\begin{equation} \label{ho}
y^{(n)} + \sum_{k = 0}^{n-2} p_k(x) y^{(k)}, \quad n > 2.
\end{equation}
Note that the differential expression \eqref{defl} can be transformed into \eqref{ho} in the case of sufficiently smooth coefficients $\{ \tau_{\nu} \}_{\nu = 0}^{n-2}$.

Thus, the development of inverse spectral theory for the higher-order operators \eqref{ho} required new approaches. Relying on the ideas of Leibenson \cite{Leib66, Leib71}, Yurko has created the method of spectral mappings. This method allowed him to construct inverse problem solutions for the higher-order differential operators \eqref{ho} with regular (integrable) coefficients on the half-line $x > 0$ and on a finite interval $x \in (0, T)$ (see \cite{Yur92, Yur02}). The case of Bessel-type singularities also was considered \cite{Yur93, Yur95}. Later on, the ideas of the method of spectral mappings were applied to a wide range of inverse spectral problems, e.g., to inverse problems for the first-order differential systems \cite{Yur05}, for differential operators on graphs \cite{Yur16}, for quadratic differential pencils \cite{BY12}. This method is based on the theory of analytic functions and mainly on the contour integration in the complex plane of the spectral parameter. The method of spectral mappings reduces a nonlinear inverse problem to a linear equation in a suitable Banach space. This space is constructed in different ways for different operator classes. In particular, for differential operators on a finite interval, the main equation is usually derived in the space $m$ of infinite bounded sequences. 
It is also worth mentioning that
an approach to inverse scattering problems for higher-order differential operators \eqref{ho} on the full line was developed by Beals et al \cite{Beals85, Beals88}.

During the last 20 years, the inverse problems are actively investigated for the second-order differential operators with distributional potentials (see, e.g., \cite{HM-sd, HM-2sp, HM-half, FIY08, MT09, SS10, HP12, Eckh14, Gul19, Bond21-AMP}). In particular, Hryniv and Mykytyuk \cite{HM-sd, HM-2sp, HM-half} transferred the transformation operator method to the Sturm-Liouville operators \eqref{StL} with potential $q(x)$ of class $W_2^{-1}(0,1)$ and so generalized the basic results of inverse problem theory to this class of operators. Note that the space $W_2^{-1}$ contains the Dirac $\de$-function and the Coulumb potential $\frac{1}{x}$, which are used for modeling particle interactions in quantum mechanics \cite{Alb05}. The method of spectral mappings has been extended to the Sturm-Liouville operators with potentials of $W_2^{-1}$ in \cite{FIY08, Bond21-AMP, Bond21-tamkang}. This opens the possibility of constructing the inverse spectral theory for higher-order differential operators with distribution coefficients. However, till now, only the first steps have been taken in this direction. In \cite{Bond21, Bond22}, the uniqueness of recovering the higher-order differential operators with distribution coefficients on a finite interval and on the half-line has been studied. The goals of this paper are to derive the linear main equation of the inverse problem, to prove its unique solvability, and to obtain reconstruction formulas for the coefficients $\{ \tau_{\nu} \}_{\nu = 0}^{n-2}$ of various classes.

\subsection{Problem statement and methods}

Our treatment of the differential expression \eqref{defl} is based on \textit{the regularization approach}. Namely, we will assume that the differential equation
\begin{equation} \label{eqv}
\ell_n(y) = \la y, \quad x \in (0, 1),
\end{equation}
where $\la$ is the spectral parameter,
can be equivalently transformed into the first-order system
\begin{equation} \label{sys}
Y'(x) = (F(x) + \Lambda) Y(x), \quad x \in (0, 1),
\end{equation}
where $Y(x)$ is a column vector function of size $n$, 
$\Lambda$ is the $(n \times n)$-matrix whose entry at the position $(n,1)$ equals $\la$ and all the other entries are zero, and $F(x) = [f_{k,j}(x)]_{k,j = 1}^n$ is a matrix function with the following properties:
\begin{equation} \label{propF}
\begin{array}{llll}
f_{k,j}(x) \equiv 0, \quad & k + 1 < j, \qquad
& f_{k,k+1}(x) \equiv 1, \quad & k = \overline{1,n-1}, \\
f_{k,k} \in L_2(0,1), \quad & k = \overline{1,n}, \qquad
& f_{k,j} \in L_1(0,1), \quad & k > j, \quad \mbox{trace}(F(x)) = 0.
\end{array}
\end{equation}
We denote the class of $(n \times n)$ matrix functions satisfying \eqref{propF} by $\mathfrak F_n$.

By using any matrix $F \in \mathfrak F_n$, one can define the quasi-derivatives
\begin{equation} \label{quasi}
y^{[0]} := y, \quad y^{[k]} = (y^{[k-1]})' - \sum_{j = 1}^k f_{k,j} y^{[j-1]}, \quad k = \overline{1,n},
\end{equation}
and the domain
$$
\mathcal D_F = \{ y \colon y^{[k]} \in AC[0,1], \, k = \overline{0, n-1} \}.
$$

\begin{df} \label{def:F}
A matrix function $F(x) \in \mathfrak F_n$ is called \textit{an associated matrix} of the differential expression $\ell_n(y)$ if $\ell_n(y) = y^{[n]}$ for any $y \in \mathcal D_F$. We call a function $y$ \textit{a solution} of equation \eqref{eqv} if $y \in \mathcal D_F$ and $y^{[n]} = \la y$, $x \in (0,1)$.
\end{df}

For a function $y \in \mathcal D_F$, introduce the notation
$\vec y(x) = \mbox{col} ( y^{[0]}(x), y^{[1]}(x), \ldots, y^{[n-1]}(x))$. Obviously, $y$ is a solution of equation \eqref{eqv} if and only if $Y = \vec y$ satisfies \eqref{sys}.

%The associated matrix $F(x)$ is constructed by the coefficients $\{ \tau_{\nu}\}_{\nu = 0}^{n-2}$ via a certain rule which depends on $n$ and the class of $\{ \tau_{\nu}\}_{\nu = 0}^{n-2}$.
The associated matrices for various classes of differential expressions $\ell_n(y)$ have been constructed, e.g., in \cite{MS16, MS19, KM19, VNS21, Bond22} (see also Subsections~\ref{sec:3}-\ref{sec:evenW} of this paper).
For example, for the differential expression $\ell_2(y) = y'' - \tau_0 y$, $\tau_0 \in W_2^{-1}(0,1)$, that is, $\tau_0 = \sigma'_0$, $\sigma_0 \in L_2(0,1)$, the associated matrix has the form (see \cite{SS99}):
$$
F(x) = \begin{bmatrix}
\sigma_0(x) & 1 \\ -\sigma_0^2(x) & -\sigma_0(x)
\end{bmatrix}.
$$

For the regular case $\tau_{\nu} \in L_1(0,1)$, $\nu = \overline{0,n-2}$, the construction of associated matrix $F(x)$ is well-known (see \cite{EM99} and Subsection~\ref{sec:evenL} of this paper). The regularization of even order ($n = 2m$) differential operators \eqref{defl} with distribution coefficients $\tau_{2k+j} \in W_2^{-(m-k-j)}(0,1)$, $k = \overline{0,m-1}$, $j = 0, 1$, has been obtained by Mirzoev and Shkalikov \cite{MS16}. Later on, the case of odd order $n$ was considered in \cite{MS19}. Vladimirov \cite{Vlad17} suggested a more general construction which, in particular, includes the both cases \cite{MS16, MS19}. It is worth mentioning that in \cite{MS16, MS19, Vlad17} the differential expressions of more general form than \eqref{defl} were studied, with the coefficients at $y^{(n)}$ and $y^{(n-1)}$ not necessarily equal $1$ and $0$, respectively. However, in this paper, we confine ourselves to the form \eqref{defl}, which is natural for studying the inverse problems \cite{Bond21, Bond22}.

In this paper, we assume that $\ell_n(y)$ is any differential expression that has an associated matrix in terms of Definition~\ref{def:F}. We do not impose any additional restrictions on $\{ \tau_{\nu} \}_{\nu = 0}^{n-2}$, since we are interested to formulate the abstract results which can be applied to various classes of differential operators. Certain restrictions on $\{ \tau_{\nu} \}_{\nu = 0}^{n-2}$ will be imposed below when necessary.

Let us proceed to the inverse problem formulation. Suppose that we have a differential expression of form \eqref{defl} and an associated matrix $F(x) = [f_{k,j}]_{k,j = 1}^n$.
By using the corresponding quasi-derivatives \eqref{quasi}, define the linear forms
\begin{equation} \label{defU}
\mathcal U_{s,a}(y) := y^{[p_{s,a}]}(a) + \sum_{j = 1}^{p_{s,a}} u_{s,j,a} y^{[j-1]}(a), \quad s = \overline{1, n}, \quad a = 0, 1,
\end{equation}
where $p_{s,a} \in \{ 0, \ldots, n-1 \}$, $p_{s,a} \ne p_{k,a}$ for $s \ne k$, and $u_{s,j,a}$ are some complex numbers. In addition, introduce the matrices $U_a = [u_{s,j,a}]_{s,j = 1}^n$, $u_{s,j,a} := \de_{j,p_{s,a} + 1}$ for $j > p_{s,a}$, $a = 0, 1$. Here and below, $\de_{j,k}$ is the Kronecker delta. We call by the problem $\mathcal L$ the triple $(F(x), U_0, U_1)$. Below we introduce various characteristics related to the problem $\mathcal L$.

Denote by $\{ C_k(x,\la) \}_{k = 1}^n$ and the solutions of equation~\eqref{eqv} satisfying the initial conditions
\begin{equation} \label{initC1}
\mathcal U_{s,0} (C_k) = \de_{s,k}, \quad s = \overline{1, n}.
\end{equation}
Equavalently, the $(n \times n)$-matrix function $C(x, \la) := [\vec C_k(x, \la)]_{k = 1}^n$ is the solution of the system \eqref{sys} with the initial condition $C(0, \la) = U_0^{-1}$. Therefore, the solutions $\{ C_k(x,\la) \}_{k = 1}^n$ are uniquely defined. Moreover, their quasi-derivatives $C_k^{[j]}(x, \la)$ are entire in $\la$ for each fixed $x \in [0,1]$, $k = \overline{1,n}$, $j = \overline{0,n-1}$.

It has been proved in \cite[Section 4]{Bond21} that, for all $\la \in \mathbb C$ except for a countable set, equation \eqref{eqv} has the so-called \textit{Weyl solutions} $\{ \Phi_k(x,\la) \}_{k = 1}^n$ satisfying the boundary conditions
\begin{equation} \label{bcPhi}
    \mathcal U_{s,0}(\Phi_k) = \de_{s,k}, \quad s = \overline{1, k}, \qquad
    \mathcal U_{s,1}(\Phi_k) = 0, \quad s = \overline{k+1,n},
\end{equation}
Define the matrix function $\Phi(x, \la) = [\vec \Phi_k(x, \la)]_{k = 1}^n$. The columns of the matrices $C(x, \la)$ and $\Phi(x, \la)$ form fundamental solution systems of \eqref{sys}. Consequently, the following relation holds: 
\begin{equation} \label{relM}
\Phi(x, \la) = C(x, \la) M(\la) 
\end{equation}
where the matrix function $M(\la)$ is called \textit{the Weyl matrix} of the problem $\mathcal L$ (see \cite{Bond21}).

The notion of Weyl matrix generalizes the notion of Weyl function for the second-order operators (see \cite{Mar77, Yur02}). Weyl functions and their generalizations play an important role in the inverse spectral theory for various classes of differential operators. In particular, Yurko \cite{Yur92, Yur93, Yur95, Yur02} has used the Weyl matrix as the main spectral characteristics for the reconstruction of the higher-order differential operators \eqref{ho} with regular coefficients. The analogous inverse problem for the differential expression of form \eqref{defl} can be formulated as follows.

\begin{prob} \label{prob:Weyl}
Given the Weyl matrix $M(\la)$, find the coefficients $\{ \tau_{\nu} \}_{\nu = 0}^{n-2}$.
\end{prob}

The uniqueness of Problem~\ref{prob:Weyl} solution has been proved in \cite{Bond21} for the Mirzoev-Shkalikov case: $n = 2m$, $\tau_{2k+j} \in W_2^{-(m-k-j)}(0,1)$ and $n = 2m+1$, $\tau_{2k+j} \in W_1^{-(m-k-j)}(0,1)$, $j = 0,1$. 
In \cite{Bond22}, the uniqueness of recovering the boundary condition coefficients from the Weyl matrix has been studied.

It has been shown in \cite[Section~4]{Bond21} that
the Weyl matrix $M(\la) = [M_{j,k}(\la)]_{j,k = 1}^n$ is unit lower-triangular, and its non-trivial entries have the form
\begin{equation} \label{Mjk}
M_{j,k}(\la) = -\frac{\Delta_{j,k}(\la)}{\Delta_{k,k}(\la)}, \quad 1 \le k < j \le n,
\end{equation}
where $\Delta_{k,k}(\la) := \det[\mathcal U_{s,1}(C_r)]_{s,r = k + 1}^n$ and $\Delta_{j,k}(\la)$ is obtained from $\Delta_{k,k}(\la)$ by the replacement of $C_j$ by $C_k$. The functions $C_r^{[s]}(1, \la)$, $r = \overline{1, n}$, $s = \overline{0,n-1}$, are entire analytic in $\la$, so do the functions $\Delta_{j,k}(\la)$, $1 \le k \le j \le n$. Hence, $M(\la)$ is meromorphic in $\la$, and the poles of the $k$-th column of $M(\la)$ coincide with the zeros of $\Delta_{k,k}(\la)$. At the same time, the zeros of the entire functions $\Delta_{j,k}(\la)$ , $1 \le k \le j \le n$, coincide with the eigenvalues of some boundary value problems for equation \eqref{eqv}, and the inverse problem by the Weyl matrix (Problem~\ref{prob:Weyl}) is related to the inverse problem by $\frac{n(n+1)}{2}$ spectra (see \cite{Bond21} for details).

We will say that the problem $\mathcal L$ belongs to the class $W$ if all the zeros of $\Delta_{k,k}(\la)$ are simple for $k = \overline{1,n-1}$. Then, in view of \eqref{Mjk}, the poles of $M(\la)$ are simple. In general, the function $\Delta_{k,k}(\la)$ can have at most finite number of multiple zeros. The latter case can be treated by developing the methods of Buterin et al \cite{But07, BSY13}, who considered the non-self-adjoint Sturm-Liouville operators ($n = 2$) with regular potentials.
However, the case of multiple zeros is much more technically complicated, so, in this paper, we always assume that $\mathcal L \in W$.

Denote by $\Lambda$ the set of the Weyl matrix poles.
Consider the Laurent series
$$
M(\la) = \frac{M_{\langle -1 \rangle}(\la_0)}{\la - \la_0} + M_{\langle 0 \rangle}(\la_0) + M_{\langle 1 \rangle}(\la_0)(\la - \la_0) + \dots, \quad \la_0 \in \Lambda.
$$
Denote
\begin{equation} \label{defN}
\mathcal N(\la_0) := [M_{\langle 0 \rangle}(\la_0)]^{-1} M_{\langle -1\rangle}(\la_0), \quad \la_0 \in \Lambda,
\end{equation}
We call the collection $\{ \la_0, \mathcal N(\la_0) \}_{\la_0 \in \Lambda}$ \textit{the spectral data} of the problem $\mathcal L$. Obviously, the spectral data are uniquely specified by the Weyl matrix $M(\la)$, so Problem~\ref{prob:Weyl} can be reduced to the following problem.

\begin{prob} \label{prob:sd-coef}
Given the spectral data $\{ \la_0, \mathcal N(\la_0) \}_{\la_0 \in \Lambda}$, find the coefficients $\{ \tau_{\nu} \}_{\nu = 0}^{n-2}$. 
\end{prob}

It is more convenient to study the reconstruction question for Problem~\ref{prob:sd-coef}.
It is worth mentioning that, in fact, the Weyl matrix and the spectral data can be constructed according to the above definitions for any matrix function $F(x)$ of class $\mathfrak F_n$, not necessarily associated with any differential expression of form \eqref{defl}. But, in general, the matrix $F(x)$ is not uniquely specified by the Weyl matrix (see Example~4.5 in \cite{Bond22}). Therefore, in this paper, the solution of Problem~\ref{prob:sd-coef} is divided into the two steps:
$$
\{ \la_0, \mathcal N(\la_0) \}_{\la_0 \in \Lambda} \: \stackrel{(1)}{\to} \: \{ \Phi_k(x, \la) \}_{k = 1}^n \: \stackrel{(2)}{\to} \: \{ \tau_{\nu} \}_{\nu = 0}^{n-2}.
$$

The recovery of the Weyl solutions $\{ \Phi_k(x, \la) \}_{k = 1}^n$ from the spectral data is studied for a matrix $F(x)$ of general form, and then reconstruction formulas are derived for $\{ \tau_{\nu} \}_{\nu = 0}^{n-2}$ of certain classes.

For a fixed $F \in \mathfrak F_n$, we define the quasi-derivatives \eqref{quasi}, the expression $\ell_n(y) := y^{[n]}$, the problem $\mathcal L = (F(x), U_0, U_1)$, its spectral data
$\{ \la_0, \mathcal N(\la_0) \}_{\la_0 \in \Lambda}$ as above, and focus on the following auxiliary problem.

\begin{prob} \label{prob:sd}
Given the spectral data $\{ \la_0, \mathcal N(\la_0) \}_{\la_0 \in \Lambda}$, find the Weyl solutions $\{ \Phi_k(x, \la) \}_{k = 1}^n$.
\end{prob}

Let us briefly describe the method of solution.
Along with $\mathcal L$, we
consider another problem $\tilde {\mathcal L} = (\tilde F(x), \tilde U_0, \tilde U_1)$ of the same form but with different coefficients. Similarly to $\Phi(x, \la)$, define $\tilde \Phi(x, \la)$ for $\tilde {\mathcal L}$. An important role in our analysis is played by \textit{the matrix of spectral mappings}:
$$
    \mathcal P(x, \la) = \Phi(x, \la) [\tilde \Phi(x, \la)]^{-1}.
$$

For each fixed $x \in [0,1]$, the matrix function $\mathcal P(x, \la)$ is meromorpic in $\la$ with poles at the eigenvalues $\Lambda \cup \tilde \Lambda$. The method is based on the integration of some functions by a special family of contours enclosing these eigenvalues. Applying the Residue theorem, we derive an infinite system of linear equations. Further, that system is transformed into a linear equation in the Banach space $m$ of infinite bounded sequences. The main equation of the inverse problem has the form
$$
    (\mathbf{I} - \tilde R(x)) \psi(x) = \tilde \psi(x), \quad x \in [0,1],
$$
where,
for each fixed $x \in [0,1]$, $\psi(x)$ and $\tilde \psi(x)$ are elements of $m$, $\tilde R(x)$ is a linear compact operator in $m$, and $\mathbf{I}$ is the unit operator. The element $\tilde \psi(x)$ and the operator $\tilde R(x)$ are constructed by the model problem $\tilde {\mathcal L}$ and by the spectral data $\{ \la_0, \mathcal N(\la_0) \}_{\la_0 \in \Lambda}$, $\{ \tilde \la_0, \tilde{\mathcal N}(\tilde \la_0) \}_{\tilde \la_0 \in \tilde \Lambda}$ of the two problems $\mathcal L$, $\tilde {\mathcal L}$, respectively, while the unknown element $\psi(x)$ is related to the desired functions $\{ \Phi_k(x, \la) \}_{k = 1}^n$. We prove that the operator $(\mathbf{I} - \tilde R(x))$ has the bounded inverse, and so the main equation is uniquely solvable (see Theorem~\ref{thm:main}). This implies the uniqueness of solution for Problem~\ref{prob:sd}. Using the main equation, we obtain a constructive procedure for solving Problem~\ref{prob:sd} (see Algorithm~\ref{alg:1}). These results can be applied to a wide range of differential operators \eqref{defl} with associated matrices of class $\mathfrak F_n$.

Further, by using the solution of the main equation, we derive reconstruction formulas for $\{ \tau_{\nu} \}_{\nu = 0}^{n-2}$. We describe the general idea and then apply it to the certain classes of operators:

\medskip

(i) $n = 3$, $\tau_1 \in L_2(0,1)$, $\tau_0 \in W_2^{-1}(0,1)$.

\smallskip

(ii) $n$ is even, $\tau_{\nu} \in L_2(0,1)$, $\nu = \overline{0,n-2}$.

\smallskip

(iii) $n$ is even, $\tau_{\nu} \in W_2^{-1}(0,1)$, $\nu = \overline{0,n-2}$.

\medskip

We obtain the uniqueness theorems and constructive algorithms for solving Problem~\ref{prob:sd-coef} for the cases (i)-(iii). Note that, although the functions $\tau_{\nu}$ in the case (ii) are regular, this case has less smoothness than the one considered by Yurko \cite{Yur02}. 

The reconstruction formulas have the form of series, and the main difficulties in our analysis are related to studying the convergence of those series. These difficulties increase for the case of non-smooth and/or distribution coefficients.
In order to prove the series convergence, we use the Birkhoff-type solutions constructed by Savchuk and Shkalikov \cite{SS20} and
the precise asymptotic formulas for the spectral data obtained in \cite{Bond22-asympt}. For the cases (ii) and (iii), we reconstruct the functions $\tau_{\nu}$ step-by-step for $\nu = n-2, n-3, \ldots, 1, 0$. The similar approach can be used in the case of odd $n$, which requires technical modifications.

By using the reconstruction formulas, one can develop numerical methods for solving inverse spectral problems (see \cite{IY08} for the second-order case). However, this issue requires an additional work. In this paper, we obtain theoretical algorithms, which in the future can be used for investigation of existence and stability of the inverse problem solution.

It is worth mentioning that our method of inverse problem solution is the first one for higher-order differential operators with distribution coefficents.
The obtained main equation and reconstruction formulas generalize the results of \cite{Bond21-tamkang} for the Sturm-Liouville operators with distribution potential. The other methods applied to the second-order operators (see, e.g., \cite{HM-sd, SS10}), to the best of the author's knowledge, appear to be ineffective for higher orders.

\medskip

The paper is organized as follows. In Section~\ref{sec:prelim}, we provide preliminaries and study the properties of the spectral data. Section~\ref{sec:main} is devoted to the contour integration and to the derivation of the main equation of the inverse problem in a Banach space. The unique solvability of the main equation is also proved. As a result, an algorithm for solving the auxiliary Problem~\ref{prob:sd} is obtained for arbitrary $F \in \mathfrak F_n$. In Section~\ref{sec:rec}, we derive the reconstruction formulas for the coefficients $\{ \tau_{\nu} \}_{\nu = 0}^{n-2}$ and study the convergence of the obtained series. Section~\ref{sec:concl} contains a brief summary of the main results.

\section{Preliminaries} \label{sec:prelim}

Throughout the paper, we use the following \textbf{notations}.

\begin{enumerate}
    \item $I$ is the $(n \times n)$ unit matrix, $e_k$ is the $k$-th column of $I$, $k = \overline{1, n}$.
    \item The sign $T$ denotes the matrix transpose.
    \item $\de_{k,j} = \begin{cases}
    1, \quad k = j, \\ 0, \quad k \ne j.
    \end{cases}$
    \item $J := [(-1)^{k+1} \de_{k,n-j+1}]_{k,j = 1}^n$, $J_a := [(-1)^{p_{k,a}^{\star}}\de_{k,n-j+1}]_{k,j = 1}^n$, where $p_{k,a}^{\star} := n-1-p_{k,a}$, $a = 0, 1$.
    \item If for $\la \to \la_0$
    $$
    A(\la) = \sum_{k = -q}^p a_k(\la - \la_0)^k + o((\la-\la_0)^p),
    $$
    then
    $$
        [A(\la)]_{|\la = \la_0}^{\langle k \rangle} = A_{\langle k \rangle}(\la_0) := a_k.
    $$
    \item The notations $\lfloor x \rfloor$ and $\lceil x \rceil$ are used for rounding a real number $x$ down and up, respectively.
    \item The binomial coefficients are denoted by $C_n^k = \dfrac{n!}{k!(n-k)!}$.
    \item Along with $\mathcal L$, we will consider the problems $\tilde {\mathcal L}$, $\mathcal L^{\star}$, $\tilde {\mathcal L}^{\star}$ of the same form but with different coefficients. We agree that, if a symbol $\ga$ denotes an object related to $\mathcal L$, then the symbols $\tilde \ga$, $\ga^{\star}$, $\tilde \ga^{\star}$ will denote the analogous objects related to $\tilde {\mathcal L}$, $\mathcal L^{\star}$, $\tilde {\mathcal L}^{\star}$, respectively. Note that the quasi-derivatives for the problems $\tilde {\mathcal L}$, $\mathcal L^{\star}$, $\tilde {\mathcal L}^{\star}$ are defined by using the  matrices $\tilde F(x)$, $F^{\star}(x)$, $\tilde F^{\star}(x)$, respectively, which may be different from $F(x)$.
    \item The notation $y^{[k]}$ is used for quasi-derivatives defined by \eqref{quasi} (or analogously by using the entries of $\tilde F(x)$, $F^{\star}(x)$, or $\tilde F^{\star}(x)$). The notation $\vec y(x)$ is used for the column vector of the quasi-derivatives $y^{[0]}(x)$, $y^{[1]}(x)$, \dots, $y^{[n-1]}(x)$.
    \item In estimates, the symbol $C$ is used for various positive constants independent of $x$, $l$, $k$, etc.
    \item $a \stackrel{if \: (condition)}{\times} b = \begin{cases}
    a b, \quad \text{if (condition) holds}, \\
    a, \quad \text{otherwise}.
    \end{cases}$.
\end{enumerate}

In Subsection~\ref{sec:star}, we define an auxiliary problem $\mathcal L^{\star} = (F^{\star}(x), U_0^{\star}, U_1^{\star})$ and study its properties. In Subsection~\ref{sec:sd}, the properties of the spectral data $\{ \la_0, \mathcal N(\la_0) \}_{\la_0 \in \Lambda}$ are investigated. 

\subsection{Problems $\mathcal L$ and $\mathcal L^{\star}$} \label{sec:star}

For a matrix $F \in \mathfrak F_n$, define the matrix $F^{\star}(x) = [f_{k,j}^{\star}(x)]_{k,j = 1}^n$ as follows:
\begin{equation} \label{fstar}
f_{k,j}^{\star}(x) := (-1)^{k+j+1} f_{n-j+1, n-k+1}(x).
\end{equation}
Obviously, $F^{\star} \in \mathfrak F_n$.

Let $F(x)$ be a fixed matrix function of class $\mathfrak F_n$. Suppose that $y \in \mathcal D_F$ and $z \in \mathcal D_{F^{\star}}$, the quasi-derivatives for $y$ are defined via \eqref{quasi} by using the elements of $F(x)$, the quasi-derivatives for $z$ are defined as
\begin{equation} \label{quasiz}
z^{[0]} := z, \quad z^{[k]} = (z^{[k-1]})' - \sum_{j = 1}^k f^{\star}_{k,j} z^{[j-1]}, \quad k = \overline{1,n},
\end{equation}
and
$$
\mathcal D_{F^{\star}} := \{ z \colon z^{[k]} \in AC[0,1], \, k = \overline{0, n-1} \}.
$$

Define 
\begin{gather*}
\ell_n(y) := y^{[n]}, \quad \ell_n^{\star}(z) := (-1)^n z^{[n]}, \quad
\langle z, y \rangle := \sum_{j = 0}^{n-1} (-1)^j z^{[j]} y^{[n-j-1]}.
\end{gather*}

\begin{lem}
The following relation holds:
\begin{equation} \label{wron1}
\frac{d}{dx} \langle z, y \rangle = z \ell_n(y) - y \ell_n^{\star}(z).
\end{equation}
\end{lem}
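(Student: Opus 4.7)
The plan is to prove the identity by a direct calculation in the spirit of the classical Lagrange/Green formula, using only the defining relations \eqref{quasi}, \eqref{quasiz} for the quasi-derivatives and the antisymmetric prescription \eqref{fstar} for $F^{\star}$. First I would apply the Leibniz rule to obtain
$$
\frac{d}{dx}\langle z,y\rangle \;=\; \sum_{j=0}^{n-1}(-1)^j\bigl((z^{[j]})'\, y^{[n-j-1]} + z^{[j]}\,(y^{[n-j-1]})'\bigr),
$$
and then substitute the expressions $(z^{[j]})' = z^{[j+1]} + \sum_{i=1}^{j+1}f^{\star}_{j+1,i}z^{[i-1]}$ and $(y^{[n-j-1]})' = y^{[n-j]} + \sum_{i=1}^{n-j}f_{n-j,i}y^{[i-1]}$, both valid for $j = 0,\ldots,n-1$. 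This decomposes the right-hand side into a principal part (free of entries of $F$ and $F^{\star}$) and a coefficient part (containing them).

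Next I would dispose of the principal part by telescoping. Setting $A_j := (-1)^j z^{[j]} y^{[n-j]}$, the $j$-th principal contribution rewrites as $A_j - A_{j+1}$, so summing over $j = 0,\ldots,n-1$ collapses to
$$
A_0 - A_n \;=\; z^{[0]} y^{[n]} - (-1)^n z^{[n]} y^{[0]} \;=\; z\,\ell_n(y) - y\,\ell_n^{\star}(z),
$$
which is precisely the desired right-hand side of \eqref{wron1}.

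The only remaining (and main) obstacle is to verify that the coefficient contributions
$$
S_1 := \sum_{j=0}^{n-1}\sum_{i=1}^{j+1}(-1)^j f^{\star}_{j+1,i}\, z^{[i-1]} y^{[n-j-1]}, \qquad S_2 := \sum_{j=0}^{n-1}\sum_{i=1}^{n-j}(-1)^j f_{n-j,i}\, z^{[j]} y^{[i-1]}
$$
cancel. Putting $k = j+1$ in $S_1$ and invoking \eqref{fstar} gives $(-1)^{k-1}f^{\star}_{k,i} = (-1)^i f_{n-i+1,n-k+1}$; the involution $(k,i)\mapsto(n-i+1,n-k+1)$ of the index set $\{1\le i\le k\le n\}$ then rewrites $S_1$ as $\sum_{k=1}^{n}\sum_{i=1}^{k}(-1)^{n-k+1}f_{k,i}\, z^{[n-k]} y^{[i-1]}$. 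Reindexing $S_2$ via $k \leftrightarrow n-j$ puts it in the identical form but with the opposite sign, so $S_1 + S_2 = 0$. Combined with the telescoping identity this yields \eqref{wron1}; the key technical point throughout is the sign-and-index bookkeeping, with no analytic input beyond $y \in \mathcal D_F$ and $z \in \mathcal D_{F^{\star}}$.
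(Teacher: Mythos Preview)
Your proof is correct and follows essentially the same approach as the paper's: differentiate term-by-term, substitute the quasi-derivative recursions, telescope the principal part to obtain $z\,\ell_n(y)-y\,\ell_n^{\star}(z)$, and cancel the two coefficient sums via the sign relation \eqref{fstar}. The index manipulations you perform (the involution $(k,i)\mapsto(n-i+1,n-k+1)$ and the shift $k\leftrightarrow n-j$) are precisely those used in the paper, only organized slightly differently.
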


\begin{proof}
Differentiation implies
\begin{equation} \label{sm1}
\frac{d}{dx} \langle z, y \rangle = \sum_{j = 0}^{n-1} (-1)^j (z^{[j]})' y^{[n-j-1]} + \sum_{j = 0}^{n-1} (-1)^j z^{[j]} (y^{[n-j-1]})'.
\end{equation}
From \eqref{quasiz} and \eqref{quasi}, we obtain
$$
    (z^{[j]})'  = z^{[j+1]} + \sum_{s = 1}^{j+1} f_{j+1, s}^{\star} z^{[s-1]}, \quad
    (y^{[n-j-1]})' = y^{[n-j]} + \sum_{s = 1}^{n-j} f_{n-j,s} y^{[s-1]}.
$$
Substituting the latter relations into \eqref{sm1}, we get
\begin{align*}
\frac{d}{dx} \langle z, y \rangle
= & \sum_{j = 0}^{n-1} (-1)^j y^{[n-j]} z^{[j]} + \sum_{j = 0}^{n-1} (-1)^j \sum_{s = 1}^{n-j} f_{n-j,s} y^{[s-1]} z^{[j]} \\ & + \sum_{j = 0}^{n-1} (-1)^j y^{[n-j-1]} z^{[j+1]} + \sum_{j = 0}^{n-1} (-1)^j \sum_{s = 1}^{j+1} f_{j+1,s}^{\star} y^{[n-j-1]} z^{[s-1]}.
\end{align*}
Note that
\begin{align*}
\sum_{j = 0}^{n-1} (-1)^j y^{[n-j]} z^{[j]} + \sum_{j = 0}^{n-1} (-1)^j y^{[n-j-1]} z^{[j+1]} & = y^{[n]} z + (-1)^{n-1} y z^{[n]}, \\
\sum_{j = 0}^{n-1} (-1)^j \sum_{s = 1}^{n-j} f_{n-j,s} y^{[s-1]} z^{[j]} & = \sum_{1 \le s \le j \le n} (-1)^{s+1} f_{n-s+1, n-j+1}y^{[n-j]} z^{[s-1]}, \\
\sum_{j = 0}^{n-1} (-1)^j \sum_{s = 1}^{j+1} f_{j+1,s}^{\star} y^{[n-j-1]} z^{[s-1]} & = \sum_{1 \le s \le j \le n} (-1)^{j+1} f_{j,s}^{\star} y^{[n-j]} z^{[s-1]}.
\end{align*}
Taking \eqref{fstar} into account, we arrive at \eqref{wron1}.
\end{proof}

If $y$ and $z$ satisfy the relations $\ell_n(y) = \la y$ and $\ell_n^{\star}(z) = \mu z$, respectively, then \eqref{wron1} readily implies
\begin{equation} \label{wron2}
\frac{d}{dx} \langle z, y \rangle = (\la - \mu) y z.
\end{equation}

Define $\vec y(x) = \mbox{col} ( y^{[0]}(x), y^{[1]}(x), \ldots, y^{[n-1]}(x))$ and $\vec z(x) = \mbox{col} ( z^{[0]}(x), z^{[1]}(x), \ldots, z^{[n-1]}(x))$ by using the corresponding quasi-derivatives \eqref{quasi} and \eqref{quasiz}, and the matrix $J := [(-1)^{k+1}\de_{k,n-j+1}]_{k,j = 1}^n$. Then
\begin{equation} \label{wrona1}
\langle z, y \rangle_{|x = a} = [\vec z(a)]^T J \vec y(a).
\end{equation}

For $a = 0, 1$, let $U_a = [u_{s,j,a}]_{s,j = 1}^n$ be an $(n \times n)$ matrix such that $u_{s,j,a} = \de_{j, p_{s,a} + 1}$ for $j > p_{s,a}$, where $p_{s,a} \in \{ 0, \ldots, n-1\}$, $p_{s,a} \ne p_{k,a}$ for $s \ne k$. The matrices $U_a$ define the linear forms $\mathcal U_{s,a}$ via \eqref{defU}.

Along with $U_a$, consider the matrices 
\begin{equation} \label{defUs}
U_a^{\star} := [J_a^{-1} U_a^{-1} J]^T, \quad a = 0, 1,
\end{equation}
where $J_a = [(-1)^{p_{k,a}^{\star}}\de_{k,n-j+1}]_{k,j = 1}^n$, $p_{k,a}^{\star} := n - 1 - p_{n-k+1, a}$. The matrices $U_a^{\star}$, $a = 0, 1$, generate the linear forms
$$
\mathcal U_{s,a}^{\star}(z) = z^{[p_{s,a}^{\star}]}(a) + \sum_{j = 1}^{p_{s,a}^{\star}} u_{s,j,a}^{\star} z^{[j-1]}(a), \quad s = \overline{1, n}, \quad a = 0,1.
$$

The matrices $U_a^{\star}$ are chosen is such a way that the following relation holds:
\begin{equation} \label{wrona2}
\langle z, y \rangle_{|x = a} = \sum_{s = 1}^n (-1)^{p_{s,a}^{\star}} \mathcal U_{s,a}^{\star}(z) \mathcal U_{n-s+1,a}(y)
\end{equation}
for any $y \in \mathcal D_F$, $z \in \mathcal D_{F^{\star}}$. Indeed, the right-hand side of \eqref{wrona2} can be represented in the matrix form
$$
[U_a^{\star} \vec z(a)]^T J_a U_a \vec y(a),
$$
Taking \eqref{wrona1} and \eqref{defUs} into account, we arrive at \eqref{wrona2}.

Consider the problems $\mathcal L = (F(x), U_0, U_1)$ and $\mathcal L^{\star} = (F^{\star}(x), U_0^{\star}, U_1^{\star})$. For $\mathcal L$, the matrix functions $C(x, \la)$, $\Phi(x, \la)$, and $M(\la)$ were defined in the Introduction. For $\mathcal L^{\star}$, similarly denote by $\{ C_k^{\star}(x, \la) \}_{k = 1}^n$ and $\{ \Phi_k^{\star}(x, \la) \}_{k = 1}^n$ the solutions of equation $\ell_n^{\star}(z) = \la z$, $x \in (0, 1)$, satisfying the conditions
\begin{gather} \nonumber
\mathcal U^{\star}_{s,0} (C_k^{\star}) = \de_{s,k}, \quad s = \overline{1, n}, \\ \label{bcPhis}
    \mathcal U_{s,0}^{\star}(\Phi_k^{\star}) = \de_{s,k}, \quad s = \overline{1, k}, \qquad
    \mathcal U_{s,1}^{\star}(\Phi_k^{\star}) = 0, \quad s = \overline{k+1,n}.
\end{gather}
Put $C^{\star}(x, \la) := [\vec C_k^{\star}(x, \la)]_{k = 1}^n$, $\Phi^{\star}(x, \la) := [\vec \Phi_k^{\star}(x, \la)]_{k = 1}^n$.
Then, the relation 
\begin{equation} \label{relMs}
\Phi^{\star}(x, \la) = C^{\star}(x, \la) M^{\star}(\la)
\end{equation}
holds, where $M^{\star}(\la)$ is the Weyl matrix of the problem $\mathcal L^{\star}$.

\begin{lem} \label{lem:M}
The following relations hold:
\begin{gather} \label{MJM}
[M^{\star}(\la)]^T J_0 M(\la) = J_0, \\ \label{PJP}
[\Phi^{\star}(x, \la)]^T J \Phi(x, \la) = J_0.
\end{gather}
\end{lem}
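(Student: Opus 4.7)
My plan is to establish \eqref{PJP} directly by a constancy-in-$x$ argument and then deduce \eqref{MJM} from it using \eqref{relM}, \eqref{relMs}, and the analogous identity for the Cauchy solutions.

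For \eqref{PJP}, note that each column $\Phi_j(\cdot,\la)$ of $\Phi$ satisfies $\ell_n(y) = \la y$ and each column $\Phi_k^\star(\cdot,\la)$ of $\Phi^\star$ satisfies $\ell_n^\star(z) = \la z$ with the same $\la$, so \eqref{wron2} gives $\tfrac{d}{dx}\langle \Phi_k^\star, \Phi_j\rangle \equiv 0$. By \eqref{wrona1}, the matrix $[\Phi^\star(x,\la)]^T J \Phi(x,\la)$ is independent of $x$. To identify it with $J_0$ I compute its $(k,j)$ entry via \eqref{wrona2} at \emph{both} endpoints. At $x=0$ the conditions \eqref{bcPhi}, \eqref{bcPhis} force $\mathcal U_{s,0}^\star(\Phi_k^\star) = 0$ for $s<k$ and $\mathcal U_{n-s+1,0}(\Phi_j) = 0$ for $s>n-j+1$, so only indices in the range $k \le s \le n-j+1$ can survive; this range is empty for $k+j > n+1$ and collapses to the single term $s=k$ (with value $1\cdot 1$) for $k+j = n+1$. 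Thus at $x=0$ all entries with $k+j > n+1$ vanish and those with $k+j = n+1$ equal $(-1)^{p_{k,0}^\star}$. The symmetric argument at $x=1$ (where the Weyl conditions kill the other half of the forms) shows that all entries with $k+j < n+1$ vanish. By constancy in $x$, these two partial evaluations match and together give $[\Phi^\star(x,\la)]^T J \Phi(x,\la) = J_0$, which is \eqref{PJP}.

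For \eqref{MJM}, the very same scheme applied to $[C^\star(x,\la)]^T J C(x,\la)$ is simpler: constancy follows from \eqref{wron2}, and at $x = 0$ the full initial conditions $\mathcal U_{s,0}(C_k) = \de_{s,k}$, $\mathcal U_{s,0}^\star(C_k^\star) = \de_{s,k}$ collapse the sum in \eqref{wrona2} to a single nontrivial term, yielding $[C^\star(0,\la)]^T J C(0,\la) = J_0$. Substituting \eqref{relM} and \eqref{relMs} into \eqref{PJP} produces
\[
[M^\star(\la)]^T \bigl([C^\star(x,\la)]^T J C(x,\la)\bigr) M(\la) = J_0,
\]
and replacing the middle factor by $J_0$ gives \eqref{MJM}.

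The main obstacle is precisely the endpoint evaluation of $[\Phi^\star]^T J \Phi$: the Weyl conditions \eqref{bcPhi}, \eqref{bcPhis} only pin down half of the forms $\mathcal U_{s,a}^\star(\Phi_k^\star)$, $\mathcal U_{s,a}(\Phi_j)$ at each end, so neither $x=0$ nor $x=1$ alone determines all entries of the matrix. The decisive idea is to exploit the $x$-independence to combine the two partial computations, since the triangles of vanishing entries they produce are complementary and the surviving anti-diagonal is already captured at $x=0$.
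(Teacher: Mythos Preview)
Your proof is correct and uses essentially the same core idea as the paper: the matrix with entries $\langle \Phi_k^\star, \Phi_j\rangle$ is constant in $x$ by \eqref{wron2}, and combining the complementary triangles of vanishing entries obtained from \eqref{wrona2} at $x=0$ and $x=1$ identifies it with $J_0$. The only difference is the order of deduction: the paper first writes $[M^\star(\la)]^T J_0 M(\la)$ via $M(\la)=U_0\Phi(0,\la)$ and recognizes its $(k,j)$ entry as $\langle \Phi_k^\star,\Phi_j\rangle$, so \eqref{MJM} comes first and \eqref{PJP} is an immediate corollary, whereas you prove \eqref{PJP} first and then need the small auxiliary identity $[C^\star(x,\la)]^T J\, C(x,\la)=J_0$ to extract \eqref{MJM}.
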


\begin{proof}
The initial conditions \eqref{initC1} are equivalent to $U_0 C(0, \la) = I$. % Unit matrix?
Using \eqref{relM}, we get $M(\la) = U_0 \Phi(0, \la)$. Similarly, $M^{\star}(\la) = U_0^{\star} \Phi^{\star}(0,\la)$. Hence
\begin{gather} \nonumber
A(\la) := [M^{\star}(\la)]^T J_0 M(\la) = [U_0^{\star} \Phi^{\star}(0,\la)]^T J_0 U_0 \Phi(0,\la), \quad A(\la) = [A_{k,j}(\la)]_{k,j = 1}^n,\\ \label{Akj}
A_{k,j}(\la) = [U_0^{\star} \vec \Phi^{\star}_k(0,\la)]^T J_0 U_0 \vec \Phi_j(0, \la) = \sum_{s = 1}^n (-1)^{p_{s,0}^{\star}} \mathcal U_{s,0}^{\star}(\Phi_k^{\star}) \mathcal U_{n-s+1,0}(\Phi_j).
\end{gather}

On the one hand, using \eqref{Akj}, \eqref{bcPhi}, and \eqref{bcPhis}, we get $A_{k,j}(\la) = 0$ if $k + j > n + 1$ and $A_{k,j}(\la) = (-1)^{p_{k,0}^{\star}}$ if $k + j = n + 1$. On the other hand, \eqref{wrona2} and \eqref{Akj} imply $A_{k,j}(\la) = \langle \Phi_k^{\star}, \Phi_j \rangle_{|x = 0}$. It follows from \eqref{wron2} that $\langle \Phi_k^{\star}, \Phi_j \rangle$ does not depend on $x$. Consequently,
$$
\langle \Phi_k^{\star}, \Phi_j \rangle_{|x = 0} = \langle \Phi_k^{\star}, \Phi_j \rangle_{|x = 1} = \sum_{s = 1}^n (-1)^{p_{s,1}^{\star}} \mathcal U_{s,1}^{\star}(\Phi_k^{\star}) \mathcal U_{n-s+1,1}(\Phi_j).
$$
Using the boundary conditions \eqref{bcPhi} and \eqref{bcPhis} at $x = 1$, we conclude that $A_{k,j}(\la) = 0$ if $k + j < n + 1$. Thus, $A(\la) = J_0$ and \eqref{MJM} is proved.

Using the relation $A_{k,j}(\la) = \langle \Phi_k^{\star}, \Phi_j\rangle$ for $k,j = \overline{1,n}$ and \eqref{wrona1}, we obtain
$$
A(\la) = [\Phi^{\star}(x,\la)]^T J \Phi(x, \la).
$$
This implies \eqref{PJP}.
\end{proof}

\subsection{Spectral data} \label{sec:sd}

Consider the Weyl matrix $M(\la)$ of the problem $\mathcal L = (F(x), U_0, U_1)$, where $F \in \mathfrak F_n$. Recall that the poles of the $k$-th column of $M(\la)$ coincide with the zeros of $\Delta_{k,k}(\la) = \det[\mathcal U_{s,1}(C_r)]_{s,r = k+1}^n$. One can easily show that the zeros of $\Delta_{k,k}(\la)$ coincide with the eigenvalues 
of the following boundary value problem $\mathcal L_k$:
$$
\ell_n(y) = \la y, \quad x \in (0,1), \qquad \mathcal U_{s,0}(y) = 0, \quad s = \overline{1,k}, \qquad
\mathcal U_{s,1}(y) = 0, \quad s = \overline{k+1,n}.
$$

By virtue of Theorem~1.1 in \cite{Bond22-asympt}, the spectrum of $\mathcal L_k$ is a countable set of eigenvalues $\Lambda_k := \{ \la_{l,k} \}_{l \ge 1}$  having the following asymptotics (counting with multiplicities):
\begin{equation} \label{asymptla}
\la_{l,k} = (-1)^{n-k} \left( \frac{\pi}{\sin\tfrac{\pi k}{n}} (l + \chi_k + \varkappa_{l,k}) \right)^n,  
\end{equation}
where $\{ \varkappa_{l,k} \} \in l_2$ and $\chi_k$ are constants which depend only on $n$, $k$, and $\{ p_{s,a} \}$.
Hence, for a fixed $k \in \{ 1, \ldots, n-1 \}$ and sufficiently large $l$, the eigenvalues $\la_{l,k}$ are simple. 

Assume that $\mathcal L \in W$, that is, all the zeros of $\Delta_{k,k}(\la)$ are simple for $k = \overline{1,n-1}$.
Then, in view of \eqref{Mjk} and \eqref{MJM}, the poles of $M(\la)$ and $M^{\star}(\la)$ are simple. It follows from \eqref{relM} and \eqref{relMs} that the matrix functions $\Phi(x, \la)$ and $\Phi^{\star}(x, \la)$ for each fixed $x \in [0,1]$ also have only simple poles.  

Denote $\Lambda := \bigcup_{k = 1}^{n-1} \Lambda_k$. Similarly to $\mathcal N(\la_0)$, denote
\begin{equation} \label{defNs}
\mathcal N^{\star}(\la_0) := [M^{\star}_{\langle 0 \rangle}(\la_0)]^{-1} M^{\star}_{\langle -1\rangle}(\la_0), \quad \la_0 \in \Lambda.
\end{equation}
For $\la_0 \not\in \Lambda$, we mean that $\mathcal N(\la_0) = \mathcal N^{\star}(\la_0) = 0$.

Let us study some properties of the matrices $\mathcal N(\la_0)$ and $\mathcal N^{\star}(\la_0)$.
Denote by $\phi(x, \la)$ the first row of the matrix function $\Phi(x, \la)$: $\phi(x, \la) = e_1^T \Phi(x, \la) = [\Phi_k(x, \la)]_{k = 1}^n$.

\begin{lem} \label{lem:N1}
The following relations hold for each $\la_0 \in \Lambda$: $\mathcal N^2(\la_0) = 0$,
\begin{gather} \label{relN1}
 [\mathcal N^{\star}(\la_0)]^T = - J_0 \mathcal N(\la_0) J_0^{-1}, \\ \label{relNPhi}
    \Phi_{\langle -1 \rangle}(x, \la_0) = \Phi_{\langle 0 \rangle}(x, \la_0) \mathcal N(\la_0), \quad
    \Phi^{\star}_{\langle -1 \rangle}(x, \la_0) = \Phi^{\star}_{\langle 0 \rangle}(x, \la_0) \mathcal N^{\star}(\la_0), \\ \label{lnphi}
    \ell_n(\phi_{\langle 0 \rangle}(x, \la_0)) = \la_0 \phi_{\langle 0 \rangle}(x, \la_0) + \phi_{\langle 0 \rangle}(x, \la_0) \mathcal N(\la_0).
\end{gather}
\end{lem}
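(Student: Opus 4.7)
The plan is to derive all four claims by Laurent-expanding two identities already proved in the paper around $\la_0$: the factorization $\Phi(x,\la) = C(x,\la) M(\la)$ with $C$ entire in $\la$, and the bilinear identity $[M^\star(\la)]^T J_0 M(\la) = J_0$ from Lemma~\ref{lem:M}. I will derive the algebraic identities $\mathcal N^2 = 0$ and \eqref{relN1} first; the identity \eqref{relNPhi} for $\Phi$ and its starred counterpart then follow almost mechanically, and \eqref{lnphi} is an immediate consequence.

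Rewriting Lemma~\ref{lem:M} as $[M^\star(\la)]^T = J_0 [M(\la)]^{-1} J_0^{-1}$, the existence of $\mathcal N^\star(\la_0)$ as a simple-pole residue quotient forces $M^{-1}$ to have only a simple pole at $\la_0$. I would write $M^{-1}(\la) = B_{\langle -1 \rangle}/(\la-\la_0) + B_{\langle 0 \rangle} + O(\la-\la_0)$ and note that $B_{\langle 0 \rangle}$ is unit lower triangular, hence invertible. Expanding $M \cdot M^{-1} = I$ at orders $(\la-\la_0)^{-2}$ and $(\la-\la_0)^{-1}$ gives $M_{\langle -1 \rangle} B_{\langle -1 \rangle} = 0$ and $B_{\langle -1 \rangle} = -\mathcal N\, B_{\langle 0 \rangle}$; substitution and cancellation of the invertible $B_{\langle 0 \rangle}$ yield $M_{\langle -1 \rangle} \mathcal N = 0$, equivalent to $\mathcal N^2 = 0$. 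Equating Laurent coefficients in the rewriting of Lemma~\ref{lem:M} produces $[M^\star_{\langle -1 \rangle}]^T = J_0 B_{\langle -1 \rangle} J_0^{-1}$ and $[M^\star_{\langle 0 \rangle}]^T = J_0 B_{\langle 0 \rangle} J_0^{-1}$, and therefore
\[
[\mathcal N^\star]^T = [M^\star_{\langle -1 \rangle}]^T [M^\star_{\langle 0 \rangle}]^{-T} = J_0 \bigl(B_{\langle -1 \rangle} B_{\langle 0 \rangle}^{-1}\bigr) J_0^{-1} = -J_0 \mathcal N J_0^{-1},
\]
establishing \eqref{relN1}.

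For \eqref{relNPhi}, Laurent-expanding $\Phi = CM$ with $C$ entire yields $\Phi_{\langle -1 \rangle} = C(x,\la_0) M_{\langle -1 \rangle}$ and $\Phi_{\langle 0 \rangle} = C(x,\la_0) M_{\langle 0 \rangle} + \partial_\la C(x,\la_0)\,M_{\langle -1 \rangle}$. Right-multiplying the second identity by $\mathcal N$ and applying $M_{\langle 0 \rangle} \mathcal N = M_{\langle -1 \rangle}$ (definitional) together with $M_{\langle -1 \rangle} \mathcal N = M_{\langle 0 \rangle} \mathcal N^2 = 0$ collapses the expression to $\Phi_{\langle 0 \rangle} \mathcal N = C(x,\la_0) M_{\langle -1 \rangle} = \Phi_{\langle -1 \rangle}$. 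The starred identity follows by the same argument after observing that $(\mathcal N^\star)^2 = 0$ is obtained by squaring \eqref{relN1} and using $\mathcal N^2 = 0$. Finally, \eqref{lnphi} comes from Laurent-expanding the scalar equation $\ell_n(\phi) = \la\phi$ at $\la_0$: the order $(\la-\la_0)^0$ coefficient reads $\ell_n(\phi_{\langle 0 \rangle}) = \la_0\, \phi_{\langle 0 \rangle} + \phi_{\langle -1 \rangle}$, and substituting the first-row restriction $\phi_{\langle -1 \rangle} = \phi_{\langle 0 \rangle}\mathcal N$ of \eqref{relNPhi} delivers the claim.

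The main subtlety is the simple-pole assumption on $M^{-1}$: it is not automatic from $\mathcal L \in W$ alone, because the inverse of a unit lower-triangular matrix with simple poles can a priori acquire higher-order poles through products in its Laurent expansion; here it is guaranteed by the implicit hypothesis that the spectral datum $\mathcal N^\star(\la_0)$ is well-defined, equivalently that $M^\star$ itself has only simple poles. Once this is fixed, the rest of the proof is routine bookkeeping in the Laurent expansions at $\la_0$.
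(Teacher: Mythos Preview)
Your proof is correct and follows essentially the same approach as the paper: Laurent-expanding the identity from Lemma~\ref{lem:M} and the factorization $\Phi = CM$ around $\la_0$. The paper works with the equivalent form $M(\la) J_0^{-1} [M^\star(\la)]^T = J_0^{-1}$ and derives \eqref{relN1} before $\mathcal N^2 = 0$, whereas you work with $M M^{-1} = I$ via the intermediate coefficients $B_{\langle -1\rangle}, B_{\langle 0\rangle}$ and obtain $\mathcal N^2 = 0$ first; these are cosmetically different orderings of the same computation. Your concern in the final paragraph is already handled in the paper: the sentence preceding the definition \eqref{defNs} states that the poles of $M^\star(\la)$ are simple as a consequence of $\mathcal L \in W$ via \eqref{Mjk} and \eqref{MJM}, which immediately gives the simple-pole property of $M^{-1}$ through $[M^\star]^T = J_0 M^{-1} J_0^{-1}$, so you may cite that directly rather than invoking the well-definedness of $\mathcal N^\star$.
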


\begin{proof}
The relation \eqref{MJM} implies
\begin{gather} \label{smM1}
    [M(\la)]^{-1} = J_0^{-1} [M^{\star}(\la)]^T J_0, \\ \label{smM2}
    M(\la) J_0^{-1} [M^{\star}(\la)]^T = J_0^{-1}.
\end{gather}
It follows from \eqref{smM2} that
\begin{gather} \label{smM3}
    M_{\langle -1 \rangle}(\la_0) J_0^{-1} [M_{\langle -1 \rangle}^{\star}(\la_0)]^T = 0, \\ \label{smM4}
    M_{\langle 0 \rangle}(\la_0) J_0^{-1} [M_{\langle -1 \rangle}^{\star}(\la_0)]^T + M_{\langle -1 \rangle}(\la_0) J_0^{-1} [M_{\langle 0 \rangle}^{\star}(\la_0)]^T = 0.
\end{gather}
Using \eqref{defN}, \eqref{defNs}, and \eqref{smM4}, we obtain \eqref{relN1}. Multiplying \eqref{relN1} by $\mathcal N(\la_0)$ and using \eqref{smM3}, we derive
$$
\mathcal N(\la_0) J_0^{-1} [\mathcal N^{\star}(\la_0)]^T = -\mathcal N^2(\la_0) J_0^{-1} = 0.
$$
Hence $\mathcal N^2(\la_0) = 0$.

Using \eqref{relM} and \eqref{smM1}, we obtain
$$
C(x, \la) = \Phi(x, \la) [M(\la)]^{-1} = \Phi(x, \la) J_0^{-1} [M^{\star}(\la)]^T J_0.
$$
Since $C(x, \la)$ is entire in $\la$ for each fixed $x \in [0,1]$, then we get
\begin{equation} \label{smM5}
    \Phi_{\langle 0 \rangle}(x, \la_0) J_0^{-1} [M^{\star}_{\langle -1\rangle}(\la_0)]^T J_0 +
    \Phi_{\langle -1 \rangle}(x, \la_0) J_0^{-1} [M^{\star}_{\langle 0\rangle}(\la_0)]^T J_0 = 0, \quad \la_0 \in \Lambda.
\end{equation}
Using \eqref{smM5} and \eqref{defNs}, we derive
$$
 \Phi_{\langle 0 \rangle}(x, \la_0) J_0^{-1} [\mathcal N^{\star}(\la_0)]^T J_0  + \Phi_{\langle -1 \rangle}(x, \la_0) = 0.
$$
Taking \eqref{relN1} into account, we arrive at the first relation in \eqref{relNPhi}. The second one is similar.

It follows from the relation 
$\ell_n(\phi(x, \la)) = \la \phi(x, \la)$
that
\begin{align*}
    & \ell_n(\phi_{\langle -1 \rangle}(x, \la_0)) = \la_0 \phi_{\langle -1 \rangle}(x, \la_0), \\
    & \ell_n(\phi_{\langle 0 \rangle}(x, \la_0)) = \la_0 \phi_{\langle 0 \rangle}(x, \la_0) + \phi_{\langle -1 \rangle}(x, \la_0).
\end{align*}
Using \eqref{relNPhi}, we arrive at \eqref{lnphi}.
\end{proof}

Consider the entries of the matrix $\mathcal N(\la_0) = [\mathcal N_{k,j}(\la_0)]_{k,j = 1}^n$. Since $M(\la)$ is unit lower-triangular, we have $\mathcal N_{k,j}(\la_0) = 0$ for all $k \le j$, $\la_0 \in \Lambda$.
The structural properties of $\mathcal N(\la_0)$ are described by the following lemma.

\begin{lem} \label{lem:N2}
(i) If $\la_0 \not\in \Lambda_k$, then $\mathcal N_{s, j}(\la_0) = 0$, $s = \overline{k+1,n}$, $j = \overline{1,k}$. 

(ii) If $\la_0 \in \Lambda_s$ for $s = \overline{\nu + 1, k-1}$, $\la_0 \not\in \Lambda_{\nu}$, $\la_0 \not\in \Lambda_k$, $1 \le \nu + 1 < k \le n$, then $\mathcal N_{k, \nu + 1}(\la_0) \ne 0$. (Here $\Lambda_0 = \Lambda_n = \varnothing$).
\end{lem}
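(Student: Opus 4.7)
The proof uses two main ingredients: a block-algebraic analysis of $\mathcal N(\lambda_0)$ via the block structure of $M(\lambda)$, and an upper-triangular linear system derived from \eqref{relNPhi}. As a preliminary I would first establish the identities $\mathcal U_{s,1}(\Phi_k)\equiv 0$ for $s>k$ and $\mathcal U_{k,1}(\Phi_k(\cdot,\lambda)) = \Delta_{k-1,k-1}(\lambda)/\Delta_{k,k}(\lambda)$ from the determinantal representation $\Phi_k(x,\lambda)\,\Delta_{k,k}(\lambda) = \det A_k(x,\lambda)$, where $A_k$ is the $(n-k+1)\times(n-k+1)$ matrix obtained by adjoining the top row $(C_k,C_{k+1},\dots,C_n)$ to the matrix defining $\Delta_{k,k}$; applying $\mathcal U_{s,1}$ to this top row produces two identical rows for $s>k$ (zero determinant), while for $s=k$ the remaining minor is exactly $\Delta_{k-1,k-1}$.

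\textbf{Part (i).} Partition $M = \begin{pmatrix} A & 0 \\ B & D \end{pmatrix}$ with $A$ of size $k\times k$ (both $A$ and $D$ unit lower-triangular). Direct computation of $\mathcal N = M_{\langle 0\rangle}^{-1}M_{\langle -1\rangle}$ in block form gives the lower-left $(n-k)\times k$ block as
$$D_{\langle 0\rangle}^{-1}\bigl(B_{\langle -1\rangle} - B_{\langle 0\rangle}\,A_{\langle 0\rangle}^{-1}\,A_{\langle -1\rangle}\bigr).$$
The main claim is that $B(\lambda)\,A(\lambda)^{-1}$ is analytic at $\lambda_0\notin\Lambda_k$. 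To see this, observe that $(BA^{-1})_{s-k,j}(\lambda) = \mathcal U_{s,0}(Y_j(\cdot,\lambda))$ for $s>k$, $j\le k$, where $Y_j$ is the unique solution of $\ell_n(y)=\lambda y$ with $\mathcal U_{s,0}(y)=\delta_{s,j}$ for $s\le k$ and $\mathcal U_{s,1}(y)=0$ for $s>k$; the determinant governing the existence and analytic dependence of $Y_j$ is precisely $\Delta_{k,k}(\lambda)$, which is non-zero at $\lambda_0$ by hypothesis. Writing $B = (BA^{-1})\cdot A$ and matching Laurent coefficients at $\lambda_0$ yields
$$B_{\langle -1\rangle} - B_{\langle 0\rangle}\,A_{\langle 0\rangle}^{-1}\,A_{\langle -1\rangle} = -X_1\,A_{\langle -1\rangle}\,A_{\langle 0\rangle}^{-1}\,A_{\langle -1\rangle} = -X_1\,A_{\langle 0\rangle}\,\mathcal N_A^2,$$
where $\mathcal N_A := A_{\langle 0\rangle}^{-1}A_{\langle -1\rangle}$ and $X_1$ is the first-order Taylor coefficient of $BA^{-1}$. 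Since $\mathcal N^2(\lambda_0)=0$ by Lemma~\ref{lem:N1}, its upper-left $k\times k$ block $\mathcal N_A^2$ also vanishes, so the difference above is zero and the required block of $\mathcal N(\lambda_0)$ is identically zero.

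\textbf{Part (ii).} By (i) and the strict lower-triangularity of $\mathcal N$, the column $j=\nu+1$ of $\mathcal N(\lambda_0)$ is supported in rows $\{\nu+2,\dots,k\}$. Applying $\mathcal U_{s,1}$ to \eqref{relNPhi} for $s>j$ yields the upper-triangular homogeneous system
$$\sum_{k'=s}^n U_{s,k'}(\lambda_0)\,\mathcal N_{k',j}(\lambda_0) = 0, \qquad s=j+1,\dots,n,$$
with diagonal entries $U_{s,s}(\lambda_0) = [\Delta_{s-1,s-1}/\Delta_{s,s}]_{\langle 0\rangle}(\lambda_0)$. For $s\in\{\nu+2,\dots,k-1\}$, both $\Delta_{s-1,s-1}$ and $\Delta_{s,s}$ have simple zeros at $\lambda_0$ (since $\mathcal L\in W$), giving $U_{s,s}(\lambda_0) = \Delta_{s-1,s-1}'(\lambda_0)/\Delta_{s,s}'(\lambda_0)\ne 0$. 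Assuming $\mathcal N_{k,\nu+1}(\lambda_0)=0$ for contradiction, back-substitution from $s=k-1$ down to $s=\nu+2$ forces $\mathcal N_{k',\nu+1}(\lambda_0)=0$ for every $k'\in\{\nu+2,\dots,k\}$, whence $y_{\nu+1}:=\Phi_{\nu+1,\langle -1\rangle}(\cdot,\lambda_0)\equiv 0$ by \eqref{relNPhi}; but $\lambda_0\in\Lambda_{\nu+1}$ makes $y_{\nu+1}$ a non-trivial eigenfunction of $\mathcal L_{\nu+1}$, contradiction.

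The main difficulty is the analyticity argument for $BA^{-1}$ at $\lambda_0\notin\Lambda_k$ in Part~(i) together with its algebraic reduction via $\mathcal N_A^2=0$; the back-substitution in Part~(ii) is routine once Part~(i) is established.
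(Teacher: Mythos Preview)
Your proof is correct. Part~(ii) follows essentially the same route as the paper: apply the linear forms $\mathcal U_{s,1}$ to the relation \eqref{relNPhi}, use that the diagonal coefficients $\mathcal U_{s,1}(\Phi_{s,\langle 0\rangle}(\cdot,\la_0))$ do not vanish for $s=\overline{\nu+2,k-1}$, and back-substitute to force $\Phi_{\nu+1,\langle -1\rangle}\equiv 0$, contradicting its non-triviality. (A minor remark: the non-triviality of $\Phi_{\nu+1,\langle -1\rangle}(\cdot,\la_0)$ uses $\la_0\notin\Lambda_\nu$, not just $\la_0\in\Lambda_{\nu+1}$; one sees this from $\mathcal U_{\nu+1,1}\bigl(\Phi_{\nu+1}\Delta_{\nu+1,\nu+1}\bigr)(\la_0)=\pm\Delta_{\nu,\nu}(\la_0)\ne 0$. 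The paper also glosses over this point.)

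Part~(i), however, is a genuinely different argument from the paper's. The paper proceeds column-by-column: from $\la_0\notin\Lambda_k$ it gets $\Phi_{k,\langle -1\rangle}\equiv 0$, applies $\mathcal U_{s,0}$ for $s>k$ to the $k$-th column of \eqref{relNPhi} to obtain $\mathcal N_{s,k}(\la_0)=0$, and then runs an induction down through $j=k-1,\dots,1$. Your argument is purely block-algebraic: you identify the lower-left $(n-k)\times k$ block of $\mathcal N(\la_0)$ as $-D_{\langle 0\rangle}^{-1}X_1 A_{\langle 0\rangle}\mathcal N_A^2$ via the analyticity of $BA^{-1}$ (realized concretely through the auxiliary solutions $Y_j$), and then kill it in one stroke using $\mathcal N_A^2=0$, which follows from $\mathcal N^2(\la_0)=0$ (Lemma~\ref{lem:N1}) and the block-upper-triangular vanishing. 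This is cleaner in that it handles all columns $j\le k$ simultaneously and makes transparent \emph{why} the hypothesis $\la_0\notin\Lambda_k$ (and not membership in the individual $\Lambda_j$) suffices: it is exactly the condition that the ``mixing matrix'' $BA^{-1}$ be regular. The paper's approach, on the other hand, is more elementary in that it avoids any appeal to $\mathcal N^2=0$ and stays at the level of the Weyl solutions and the linear forms $\mathcal U_{s,0}$.
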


\begin{proof}
This lemma is proved similarly to Lemma~2.3.1 in \cite{Yur02}, so we outline the proof briefly. If $\la_0 \not\in \Lambda_k$, then $\Phi_{k,\langle-1\rangle}(x, \la_0) = 0$. On the other hand, it follows from \eqref{relNPhi} that
$$
\Phi_{k,\langle-1\rangle}(x, \la_0) = \sum_{s = k+1}^n \mathcal N_{s,k}(\la_0) \Phi_{s,\langle 0 \rangle}(x, \la_0).
$$
Applying the linear forms $\mathcal U_{s,0}$ to this relation for $s = \overline{k+1,n}$, we conclude that $\mathcal N_{s,k}(\la_0) = 0$, $s = \overline{k+1,n}$. Thus, the assertion (i) is proved for $j = k$. The proof for $j = k-1, \ldots, 2, 1$ can be obtained by induction.

In order to prove (ii), we suppose that $\Delta_{\nu,\nu}(\la_0) \ne 0$, $\Delta_{s,s}(\la_0) = 0$ for $s = \overline{\nu+1,k-1}$. Then, it can be shown that $\mathcal U_{s,1}(\Phi_{s,\langle 0 \rangle}(x, \la_0)) \ne 0$, $s = \overline{\nu + 2, k-1}$ and $\Phi_{\nu + 1, \langle -1 \rangle}(x, \la_0) \not\equiv 0$. Suppose that $\mathcal N_{k,\nu+1}(\la_0) = 0$. Consequently, \eqref{relNPhi} implies
$$
\Phi_{\nu+1,\langle-1\rangle}(x, \la_0) = \sum_{s = \nu+2}^{k-1} \mathcal N_{s,\nu+1}(\la_0) \Phi_{s,\langle 0 \rangle}(x, \la_0).
$$
Applying the linear forms $\mathcal U_{s,1}$ for $s = \overline{\nu+2,k-1}$, we conclude that $N_{s,\nu+1}(\la_0) = 0$, $s = \overline{\nu+2,k-1}$, and so $\Phi_{\nu+1,\langle-1\rangle}(x, \la_0) \equiv 0$. This contradiction yields (ii).
\end{proof}

In view of the asymptotics \eqref{asymptla}, we have $\la_{l,k} \ne \la_{r,k + 1}$ for sufficiently large $l$ and $r$. Therefore, Lemma~\ref{lem:N2} implies the following corollary.

\begin{cor} \label{cor:N}
For sufficiently large $|\la_0|$, $\la_0 \in \Lambda$, all the entries of $\mathcal N(\la_0)$ equal zero except $\mathcal N_{k+1,k}(\la_0)$, $k = \overline{1,n-1}$.
\end{cor}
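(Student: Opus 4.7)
The plan is to combine the asymptotic formula~\eqref{asymptla} with Lemma~\ref{lem:N2}(i). First, I would upgrade the observation preceding the corollary to arbitrary pairs of indices: for any $k_1 \ne k_2$ in $\{1,\ldots,n-1\}$, the sets $\Lambda_{k_1}$ and $\Lambda_{k_2}$ are eventually disjoint. Indeed, by~\eqref{asymptla} the dominant term of $\la_{l,k}$ is $(-1)^{n-k}\bigl(\pi/\sin\tfrac{\pi k}{n}\bigr)^n l^n$, whose sign and modulus depend nontrivially on $k$, so an equality $\la_{l,k_1} = \la_{r,k_2}$ can hold only for bounded $l,r$. Consequently, for $|\la_0|$ sufficiently large with $\la_0 \in \Lambda$, there is a unique $k_0 \in \{1,\ldots,n-1\}$ such that $\la_0 \in \Lambda_{k_0}$, and $\la_0 \notin \Lambda_j$ for every other $j$.

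Second, I would apply Lemma~\ref{lem:N2}(i) to each index $j \in \{1,\ldots,n-1\}\setminus\{k_0\}$ to obtain $\mathcal N_{s,i}(\la_0) = 0$ whenever $s \in \{j+1,\ldots,n\}$ and $i \in \{1,\ldots,j\}$ (renaming the lemma's dummy variables to avoid clashing with $k_0$). Since $M(\la)$ is unit lower-triangular, $\mathcal N(\la_0)$ is strictly lower-triangular, so it suffices to examine entries $(s,i)$ with $s > i$. Such an entry is killed by the lemma as soon as the integer interval $\{i,i+1,\ldots,s-1\}$ contains some $j \ne k_0$ (the requirement $j \in \{1,\ldots,n-1\}$ is automatic since $1 \le i \le j \le s-1 \le n-1$). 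If $s-i \ge 2$, the interval has at least two elements, so this is immediate; if $s-i = 1$, the interval is $\{i\}$ and the condition holds iff $i \ne k_0$. Hence the sole entry not forced to vanish is $\mathcal N_{k_0+1,k_0}(\la_0)$, which is of the form asserted by the corollary.

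I expect no serious obstacle. The only mild subtlety is the strengthening of the separation statement from consecutive pairs (the only case explicitly stated before the corollary) to arbitrary pairs $k_1 \ne k_2$, but this is an immediate consequence of~\eqref{asymptla} together with the fact that $k \mapsto \pi/\sin(\pi k/n)$ takes distinct values on $\{1,\ldots,n-1\}$ (and the sign $(-1)^{n-k}$ alternates), so that cross-coincidences can occur only finitely often.
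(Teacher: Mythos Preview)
Your overall strategy---separate the $\Lambda_k$'s asymptotically and then apply Lemma~\ref{lem:N2}(i)---is the same as the paper's. However, your justification for the separation has a genuine error: the map $k\mapsto \sin(\pi k/n)$ is \emph{not} injective on $\{1,\ldots,n-1\}$, since $\sin(\pi k/n)=\sin(\pi(n-k)/n)$. Moreover, for even $n$ one also has $(-1)^{n-k}=(-1)^k$, so the leading terms of $\la_{l,k}$ and $\la_{l,n-k}$ in~\eqref{asymptla} coincide exactly. The asymptotics~\eqref{asymptla} therefore do \emph{not} by themselves rule out that a large $\la_0$ lies in both $\Lambda_k$ and $\Lambda_{n-k}$; your claim that there is a unique $k_0$ with $\la_0\in\Lambda_{k_0}$ is unjustified.

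The good news is that you do not need uniqueness of $k_0$. The paper only uses (and only states) the separation for \emph{consecutive} indices, $\Lambda_k\cap\Lambda_{k+1}$ eventually empty, which follows immediately from the sign alternation $(-1)^{n-k}$. Set $S:=\{k:\la_0\in\Lambda_k\}$; for large $|\la_0|$ the set $S$ contains no two consecutive integers. Now rerun your second paragraph with $S$ in place of $\{k_0\}$: for an entry $(s,i)$ with $s>i$, you need some $j\in\{i,\ldots,s-1\}\setminus S$. If $s-i\ge 2$, the interval $\{i,\ldots,s-1\}$ contains two consecutive integers, not both of which can lie in $S$; if $s-i=1$, the entry is $(i+1,i)$, which is exactly of the allowed form $\mathcal N_{k+1,k}$. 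This gives the corollary without ever needing $|S|=1$.
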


Define \textit{the weight numbers}
$\beta_{l,k} := \mathcal N_{k+1,k}(\la_{l,k})$. It is worth considering $\beta_{l,k}$ only for sufficiently large $l$.
It follows from \eqref{defN} and \eqref{Mjk} that
$$
\beta_{l,k} = M_{k+1,k,\langle -1\rangle}(\la_{l,k}) = -\frac{\Delta_{k+1,k}(\la_{l,k})}{\tfrac{d}{d\la} \Delta_{k,k}(\la_{l,k})}.
$$
Consequently, Theorem~6.2 from \cite{Bond22-asympt} yields the asymptotics
\begin{equation} \label{asymptbe}
    \beta_{l,k} = l^{n-1 + p_{k+1,0} - p_{k,0}} (\beta^0_k + \varkappa_{l,k}^0), \quad \{ \varkappa_{l,k}^0 \} \in l_2, \quad k = \overline{1,n-1},
\end{equation}
where the constants $\be^0_k$ depend only on $n$, $k$, and $\{ p_{s,a} \}$.

\section{Main equation} \label{sec:main}

This section is devoted to the constructive solution of the auxiliary Problem~\ref{prob:sd}, that is, to the recovery of the Weyl solutions $\{ \Phi_k(x, \la) \}_{k = 1}^n$ from the spectral data $\{ \la_0, \mathcal N(\la_0) \}_{\la_0 \in \Lambda}$. We consider this problem 
for $\mathcal L = (F(x), U_0, U_1) \in W$ with an arbitrary $F \in \mathfrak F_n$. Thus, the results of this section can be applied to a wide class of differential expressions \eqref{defl} with associated matrix of $\mathfrak F_n$.

Along with $\mathcal L$, we consider another problem $\tilde {\mathcal L} = (\tilde F(x), \tilde U_0, \tilde U_1)$ of the same form but with different coefficients. Assume that $\tilde F \in \mathfrak F_n$, $p_{s,a} = \tilde p_{s,a}$, $s = \overline{1, n}$, $a = 0, 1$. 
The quasi-derivatives for $\tilde {\mathcal L}$ are defined by the matrix $\tilde F(x)$, so they are different from the quasi-derivatives of the problem $\mathcal L$.
The problem $\tilde {\mathcal L}^{\star}$ is defined similarly to $\mathcal L^{\star}$.
For simplicity, we assume that $\tilde {\mathcal L} \in W$. The case $\tilde {\mathcal L} \in W$ requires technical modifications (see Remark~\ref{rem:mult}). Denote $\mathcal I := \Lambda \cup \tilde \Lambda$.

In Subsection~\ref{sec:cont}, we reduce the studied problem to the infinite system \eqref{infphi} of linear equations with respect to some entries of $\phi_{\langle 0 \rangle}(x,\la_0)$, $\la_0 \in \mathcal I$. Our technique is based on the contour integration in the $\la$-plane and on the Residue theorem. In Subsection~\ref{sec:Banach}, the system \eqref{infphi} is transformed into the main equation \eqref{main} in the Banach space $m$ of infinite bounded sequences. The unique solvability of the main equation is proved. Finally, we arrive at the constructive Algorithm~\ref{alg:1} for finding $\{ \Phi_k(x, \la) \}_{k = 1}^n$ by the spectral data. This algorithm will be used in the next section for solving the inverse spectral problem.

\subsection{Contour integration} \label{sec:cont}

In order to formulate and prove the main lemma of this subsection (Lemma~\ref{lem:cont}), we first need some preliminaries. Introduce the notations
\begin{gather} \label{defD}
D(x, \mu, \la) := (\la - \mu)^{-1} [\Phi(x, \mu)]^{-1} \Phi(x, \la), \quad \tilde D(x, \mu, \la) := (\la - \mu)^{-1} [\tilde \Phi(x, \mu)]^{-1} \tilde \Phi(x, \la), \\ \label{defDa}
D_{\langle \al \rangle}(x, \la_0, \la) := [D(x, \mu, \la)]_{|\mu = \la_0}^{\langle \al \rangle},  \quad \al \in \mathbb Z.
\end{gather}
and similarly define $\tilde D_{\langle \al \rangle}(x, \la_0, \la)$.  % Analogously define $\phi^{\star}(x, \la)$, $\tilde \phi(x, \la)$, $\tilde \phi^{\star}(x, \la)$.

\begin{lem} \label{lem:D}
The following relations hold:
\begin{align} \label{DN1}
    & D_{\langle -1 \rangle}(x, \la_0, \la) = - \mathcal N(\la_0) D_{\langle 0 \rangle}(x, \la_0, \la), \\ \label{DN2}
    & [D(x, \mu, \la)]_{|\la = \la_0}^{\langle -1 \rangle} = [D(x, \mu, \la)]_{|\la = \la_0}^{\langle 0 \rangle} \mathcal N(\la_0), \\ \label{DN3}
    & [(\la - \la_0) I + \mathcal N(\la_0)] D_{\langle 0 \rangle}(x, \la_0, \la) = J_0^{-1} \langle [\phi^{\star}_{\langle 0 \rangle}(x, \la_0)]^T, \phi(x, \la) \rangle, \\ \label{Ddx}
    & D'(x, \mu, \la) = J_0^{-1} [\phi^{\star}(x, \mu)]^T \phi(x, \la).
\end{align}
\end{lem}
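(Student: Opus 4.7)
My plan is to establish \eqref{Ddx} first as a structural identity for $D$, and then extract \eqref{DN1}--\eqref{DN3} as consequences of Laurent expansions at a simple pole combined with the nilpotency $\mathcal N^2(\la_0) = 0$ from Lemma~\ref{lem:N1}. The main tools are \eqref{PJP}, which furnishes the inversion $[\Phi(x,\mu)]^{-1} = J_0^{-1}[\Phi^{\star}(x,\mu)]^T J$, together with the pole-cancellation identities \eqref{relN1} and \eqref{relNPhi}.

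For \eqref{Ddx}, I substitute the inversion formula into the definition \eqref{defD} to obtain
$$
D(x,\mu,\la) = (\la - \mu)^{-1} J_0^{-1} [\Phi^{\star}(x,\mu)]^T J \Phi(x,\la).
$$
By \eqref{wrona1}, the $(k,j)$ entry of the matrix $[\Phi^{\star}(x,\mu)]^T J \Phi(x,\la)$ equals the scalar form $\langle \Phi_k^{\star}(x,\mu), \Phi_j(x,\la) \rangle$. Since $\Phi_j$ solves $\ell_n(y) = \la y$ and $\Phi_k^{\star}$ solves $\ell_n^{\star}(z) = \mu z$, formula \eqref{wron2} gives $\tfrac{d}{dx}\langle \Phi_k^{\star}, \Phi_j \rangle = (\la - \mu) \Phi_k^{\star}(x,\mu) \Phi_j(x,\la)$. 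The factor $(\la-\mu)$ cancels the prefactor, and recognizing the resulting matrix as $[\phi^{\star}(x,\mu)]^T \phi(x,\la)$ yields \eqref{Ddx}.

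For \eqref{DN1} and \eqref{DN3}, I expand $[\Phi(x,\mu)]^{-1}$ around $\mu=\la_0$. Using the second identity of \eqref{relNPhi}, the inversion formula, and \eqref{relN1}, I arrive at
$$
[\Phi(x,\mu)]^{-1} = (\mu-\la_0)^{-1} A_{-1}(x,\la_0) + A_0(x,\la_0) + O(\mu-\la_0),
$$
where $A_0 := J_0^{-1} [\Phi^{\star}_{\langle 0 \rangle}(x,\la_0)]^T J$ and $A_{-1} = -\mathcal N(\la_0) A_0$. Expanding $(\la-\mu)^{-1}$ in powers of $(\mu-\la_0)$ and reading off the coefficients of $(\mu-\la_0)^{-1}$ and $(\mu-\la_0)^{0}$ gives $D_{\langle -1 \rangle} = (\la-\la_0)^{-1} A_{-1}\Phi(x,\la)$ and $D_{\langle 0 \rangle} = [(\la-\la_0)^{-2} A_{-1} + (\la-\la_0)^{-1} A_0]\Phi(x,\la)$. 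Then \eqref{DN1} is immediate from $A_{-1} = -\mathcal N A_0$ and $\mathcal N A_{-1} = -\mathcal N^2 A_0 = 0$. For \eqref{DN3}, left-multiplying $D_{\langle 0 \rangle}$ by $(\la-\la_0)I + \mathcal N(\la_0)$ and applying the same two cancellations collapses the bracket to $A_0\Phi(x,\la)$; by \eqref{wrona1} applied entrywise, this is precisely $J_0^{-1}\langle [\phi^{\star}_{\langle 0 \rangle}(x,\la_0)]^T, \phi(x,\la) \rangle$.

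The identity \eqref{DN2} is symmetric: fixing $\mu\notin\Lambda$ and expanding $\Phi(x,\la)$ about $\la=\la_0$ via the first part of \eqref{relNPhi}, together with the expansion of $(\la-\mu)^{-1}$ in $(\la-\la_0)$, lets me read off $[D]^{\langle -1\rangle}_{|\la=\la_0}$ and $[D]^{\langle 0 \rangle}_{|\la=\la_0}$; right-multiplication of the latter by $\mathcal N(\la_0)$, combined with $\Phi_{\langle 0 \rangle}\mathcal N = \Phi_{\langle -1 \rangle}$ and $\Phi_{\langle -1 \rangle}\mathcal N = \Phi_{\langle 0 \rangle}\mathcal N^2 = 0$, finishes the proof. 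The one delicate point, and the organizing principle of the whole lemma, is that although $\Phi_{\langle 0 \rangle}(x,\la_0)$ is generally not invertible, the nilpotency $\mathcal N^2(\la_0) = 0$ exactly kills the prospective $(\la-\la_0)^{-2}$ terms in $[(\la-\la_0)I + \mathcal N]D_{\langle 0 \rangle}$, which is what gives $D_{\langle 0 \rangle}$ the clean structure demanded by \eqref{DN3}.
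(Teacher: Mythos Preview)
Your proof is correct and follows essentially the same route as the paper's: both invert $\Phi$ via \eqref{PJP} to obtain $D(x,\mu,\la) = (\la-\mu)^{-1} J_0^{-1}[\Phi^{\star}(x,\mu)]^T J \Phi(x,\la)$, read off the Laurent coefficients $D_{\langle -1\rangle}$ and $D_{\langle 0\rangle}$, and then use \eqref{relNPhi}, \eqref{relN1}, and $\mathcal N^2(\la_0)=0$ from Lemma~\ref{lem:N1} to collapse the expressions, with \eqref{Ddx} coming from \eqref{wrona1} and \eqref{wron2}. The only differences are cosmetic: you prove \eqref{Ddx} first rather than last, and you are a bit more explicit about the nilpotency cancellation, but the substance is identical.
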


\begin{proof}
Using \eqref{PJP} and \eqref{defD}, we obtain
\begin{equation} \label{relD}
D(x, \mu, \la) = (\la - \mu)^{-1} J_0^{-1} [\Phi^{\star}(x, \mu)]^T J \Phi(x, \la).
\end{equation}
It follows from \eqref{relD} and \eqref{defDa} that
\begin{align} \label{Dm1}
    & D_{\langle -1 \rangle}(x, \la_0, \la) =   
    (\la - \la_0)^{-1} J_0^{-1} [\Phi^{\star}_{\langle -1 \rangle}(x, \la_0)]^T J \Phi(x, \la), \\ \label{D0}
    & D_{\langle 0 \rangle}(x, \la_0, \la) =
    (\la - \la_0)^{-1} J_0^{-1} [\Phi^{\star}_{\langle 0 \rangle}(x, \la_0)]^T J \Phi(x, \la) + (\la - \la_0)^{-2} J_0^{-1} [ \Phi^{\star}_{\langle -1 \rangle}(x, \la_0)]^T J \Phi(x, \la).
\end{align}
Using \eqref{Dm1},\eqref{D0} together with Lemma~\ref{lem:N1}, we derive \eqref{DN1}. The relation \eqref{DN2} is proved similarly.

It follows from \eqref{wrona1} that
\begin{equation} \label{wronphi}
[\Phi^{\star}(x, \mu)]^T J \Phi(x, \la) = \langle [\phi^{\star}(x, \mu)]^T, \phi(x, \la) \rangle.
\end{equation}
Using \eqref{Dm1}, \eqref{D0}, and \eqref{wronphi}, we obtain
$$
    (\la - \la_0) D_{\langle 0 \rangle}(x, \la_0, \la) = J_0^{-1} \langle [\phi_{\langle 0 \rangle}^{\star}(x, \mu)]^T, \phi(x, \la) \rangle + D_{\langle -1 \rangle}(x, \la_0, \la).
$$
Taking \eqref{DN1} into account, we arrive at \eqref{DN3}.

In order to prove \eqref{Ddx}, we combine \eqref{relD}, \eqref{wronphi}, and \eqref{wron2}:
$$
D'(x, \mu, \la) = (\la - \mu)^{-1} J_0^{-1} \frac{d}{dx} \langle [\phi^{\star}(x, \mu)]^T, \phi(x, \la) \rangle = J_0^{-1} [\phi^{\star}(x, \mu)]^T \phi(x, \la).
$$
\end{proof}

Put $\hat {\mathcal N}(\la_0) := \mathcal N(\la_0) - \tilde {\mathcal N}(\la_0)$. Below in this section, we suppose that $x \in [0, 1]$ is fixed.

\begin{lem} \label{lem:cont}
The following relations hold:
\begin{gather} \label{contphi}
    \phi(x, \la) = \tilde \phi(x, \la) + \sum_{\la_0 \in \mathcal I} \phi_{\langle 0 \rangle} (x, \la_0) \hat {\mathcal N}(\la_0) \tilde D_{\langle 0\rangle}(x, \la_0, \la), \\ \label{contD}
    D(x, \mu, \la) - \tilde D(x, \mu, \la) = \sum_{\la_0 \in \mathcal I} [D(x, \mu, \xi)]_{\xi = \la_0}^{\langle 0 \rangle} \hat {\mathcal N}(\la_0) \tilde D_{\langle 0 \rangle}(x, \la_0, \la),
\end{gather}
where the series converge in the sense 
$$
\sum_{\la_0 \in \mathcal I} = \lim_{R \to \iy} \sum_{\la_0 \in \mathcal I_R}, \quad \mathcal I_R := \{ \la \in \mathcal I \colon |\la| < R \},
$$
uniformly by $\la, \mu$ on compact sets of $(\mathbb C \setminus \mathcal I)$.
\end{lem}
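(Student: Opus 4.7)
The plan is to establish both identities by contour integration in the spectral variable $\xi$, choosing circles $|\xi|=R_N$ that stay at uniform distance from $\mathcal I$ (available thanks to the asymptotics \eqref{asymptla}) and applying the residue theorem to meromorphic integrands. For \eqref{contphi}, I consider $\mathcal J_R(x,\la):=\frac{1}{2\pi i}\oint_{|\xi|=R}\phi(x,\xi)\tilde D(x,\xi,\la)\,d\xi$ for fixed $\la\notin\mathcal I$. The integrand is meromorphic in $\xi$, with simple poles at $\xi=\la$ and at $\xi\in\mathcal I_R$. From $\tilde D(x,\xi,\la)=(\la-\xi)^{-1}[\tilde\Phi(x,\xi)]^{-1}\tilde\Phi(x,\la)$ and $[\tilde\Phi(x,\la)]^{-1}\tilde\Phi(x,\la)=I$, the residue at $\xi=\la$ equals $-\phi(x,\la)$. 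For each $\la_0\in\mathcal I$, expanding $\phi(x,\xi)$ and $\tilde D(x,\xi,\la)$ in Laurent series in $\xi$ around $\la_0$ and applying \eqref{relNPhi} together with the analogue of \eqref{DN1} for $\tilde D$, one obtains
$$\Res_{\xi=\la_0}[\phi(x,\xi)\tilde D(x,\xi,\la)]=\phi_{\langle -1\rangle}\tilde D_{\langle 0\rangle}+\phi_{\langle 0\rangle}\tilde D_{\langle -1\rangle}=\phi_{\langle 0\rangle}(x,\la_0)\hat{\mathcal N}(\la_0)\tilde D_{\langle 0\rangle}(x,\la_0,\la),$$
uniformly whether $\la_0$ lies in $\Lambda\setminus\tilde\Lambda$, $\tilde\Lambda\setminus\Lambda$, or $\Lambda\cap\tilde\Lambda$ (the convention $\mathcal N(\la_0)=0$ off $\Lambda$ makes the appropriate Laurent term vanish in the mixed cases). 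Hence the residue theorem gives
$$\mathcal J_R(x,\la)=-\phi(x,\la)+\sum_{\la_0\in\mathcal I_R}\phi_{\langle 0\rangle}(x,\la_0)\hat{\mathcal N}(\la_0)\tilde D_{\langle 0\rangle}(x,\la_0,\la).$$

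To pass to the limit, I rewrite the integrand as $(\la-\xi)^{-1}e_1^T\mathcal P(x,\xi)\tilde\Phi(x,\la)$ in terms of the matrix of spectral mappings $\mathcal P(x,\xi):=\Phi(x,\xi)[\tilde\Phi(x,\xi)]^{-1}$. Birkhoff-type asymptotics for fundamental systems of \eqref{sys} (in the spirit of \cite{SS20}) give $\mathcal P(x,\xi)\to I$ uniformly in $x\in[0,1]$ on a suitable sequence of circles $|\xi|=R_N\uparrow\iy$. Combined with $\frac{1}{2\pi i}\oint_{|\xi|=R}(\la-\xi)^{-1}\,d\xi=-1$ for $|\la|<R$, this yields $\mathcal J_{R_N}(x,\la)\to-\tilde\phi(x,\la)$ uniformly on compact sets of $\mathbb C\setminus\mathcal I$. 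Substituting into the residue identity and rearranging yields \eqref{contphi} in the stated grouped summation sense.

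Identity \eqref{contD} follows by the same scheme applied to $\frac{1}{2\pi i}\oint_{|\xi|=R}D(x,\mu,\xi)\tilde D(x,\xi,\la)\,d\xi$. There is now an additional simple pole at $\xi=\mu$ with residue $\tilde D(x,\mu,\la)$ (since $D(x,\mu,\xi)=(\xi-\mu)^{-1}[\Phi(x,\mu)]^{-1}\Phi(x,\xi)$ has residue $+I$ at $\xi=\mu$), the pole at $\xi=\la$ contributes $-D(x,\mu,\la)$, and each pole at $\la_0\in\mathcal I$ contributes $[D(x,\mu,\xi)]_{\xi=\la_0}^{\langle 0\rangle}\hat{\mathcal N}(\la_0)\tilde D_{\langle 0\rangle}(x,\la_0,\la)$ by an analogous Laurent-product argument, now using \eqref{DN2} (with the expansion variable being $\xi$ rather than $\la$) for $D(x,\mu,\xi)$ and the analogue of \eqref{DN1} for $\tilde D(x,\xi,\la)$. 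The extra factor $(\xi-\mu)^{-1}$ together with the partial fraction $\frac{1}{(\xi-\mu)(\la-\xi)}=\frac{1}{\la-\mu}\big(\frac{1}{\xi-\mu}+\frac{1}{\la-\xi}\big)$ and $\mathcal P(x,\xi)\to I$ show that the contour integral over $|\xi|=R_N$ tends to zero, so rearranging the residue sum gives \eqref{contD}.

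The principal technical obstacle is the asymptotic step: proving $\mathcal P(x,\xi)-I\to 0$ uniformly in $x\in[0,1]$ on a sequence $R_N\uparrow\iy$ staying uniformly bounded away from $\mathcal I$, despite the singular behavior of both $\Phi$ and $\tilde\Phi$ near their spectra. This reduces, via the Birkhoff solution constructions for \eqref{sys} with arbitrary $F\in\mathfrak F_n$, to the existence of sufficiently large gaps between consecutive points of $\mathcal I$, which is provided by the leading term in \eqref{asymptla} for each $\Lambda_k$, $k=\overline{1,n-1}$.
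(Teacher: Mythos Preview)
Your overall strategy---contour integration of $\phi(x,\xi)\tilde D(x,\xi,\la)$ and of $D(x,\mu,\xi)\tilde D(x,\xi,\la)$ over large circles, residue calculus at $\xi=\la$, $\xi=\mu$, $\xi\in\mathcal I$, and letting the radius go to infinity---is essentially the paper's approach (the paper phrases it via the Cauchy formula on the complementary regions $\Xi_R^{\pm}$, which unwinds to the same computation). Your residue identifications are correct and match the paper's \eqref{smphi1}--\eqref{smphi2} and \eqref{ResD}.

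There is, however, a real gap in the asymptotic step. The claim ``$\mathcal P(x,\xi)\to I$ on a suitable sequence of circles'' is false for $n\ge 2$. The Birkhoff analysis only yields (this is \eqref{asymptP} in the paper)
\[
\mathcal P(x,\xi)=\Theta(\rho)\bigl(I+o(1)\bigr)[\Theta(\rho)]^{-1},\qquad \Theta(\rho)=\diag(1,\rho,\dots,\rho^{n-1}),
\]
so the below-diagonal entries behave like $\rho^{\,i-j}\cdot o(1)$ and need not tend to zero; already for $n=2$ one has $\mathcal P_{21}(x,\la)=\int_0^x(q-\tilde q)\,dt+o(1)$, which is $O(1)$. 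For \eqref{contphi} this does not hurt you, because only the first row $e_1^T\mathcal P(x,\xi)$ enters your integrand, and that row \emph{does} tend to $e_1^T$---so your argument for the first identity survives once you weaken the claim accordingly. For \eqref{contD} the full matrix $\mathcal P(x,\xi)$ sits between the fixed matrices $[\Phi(x,\mu)]^{-1}$ and $\tilde\Phi(x,\la)$, and ``$\mathcal P\to I$'' is exactly what you invoke to kill the remainder; since it is false, your argument as written breaks. The fix is to use the actual asymptotic \eqref{asymptP}: it gives the crude bound $\|\mathcal P(x,\xi)\|\le C|\rho|^{n-1}=C|\xi|^{(n-1)/n}$ on the admissible circles, and combining this with $|(\la-\xi)(\xi-\mu)|^{-1}\sim|\xi|^{-2}$ and contour length $\sim R$ yields an $O(R^{-1/n})$ remainder. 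This is precisely the content of the paper's $\eps_R^2\to 0$, and it is what you should replace your partial-fraction-plus-$\mathcal P\to I$ argument by.
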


\begin{proof} 
In this proof, a crucial role is played by the matrix of spectral mappings 
\begin{equation} \label{defP}
    \mathcal P(x, \la) = \Phi(x, \la) [\tilde \Phi(x, \la)]^{-1}.
\end{equation}
It follows from \eqref{PJP} and \eqref{defP} that
\begin{equation} \label{relP}
\mathcal P(x, \la) = \Phi(x, \la) J_0^{-1} [\tilde \Phi^{\star}(x,\la)]^T J.
\end{equation}

The proof consists of three steps.

\smallskip

\textsc{ Step 1. Regions and contours.} 
Choose a circle $\mathcal C_* := \{ \la \in \mathbb C \colon |\la| < \la_* \}$ of sufficiently large radius $\la_*$. 
Choose the $\sqrt[n]{\la}$ branch so that $\arg(\sqrt[n]{\la}) \in \left( -\tfrac{\pi}{2n}, \tfrac{3\pi}{2n} \right)$.
Then, it follows from the asymptotics \eqref{asymptla} that the roots $\rho_0 := \sqrt[n]{\la_0}$ of the eigenvalues $\la_0 \in (\mathcal I \setminus \mathcal C_*)$ lie in the two strips
\begin{equation} \label{defSj}
\mathcal S_j := \{ \rho \colon \mbox{Re}\, (\epsilon_j \rho) > 0, \, |\mbox{Im} (\epsilon_j \rho)| < c \}, \quad \epsilon_j := \exp(-2\pi \mathrm{i} j/n), \quad j = 0, 1,
\end{equation}
for an appropriate choice of the constant $c$. More precisely, $\sqrt[n]{\la_{l,k}} \in \mathcal S_0$ if $(n-k)$ is even and $\sqrt[n]{\la_{l,k}} \in \mathcal S_1$ otherwise. For $j = 0, 1$, denote by $\Xi_j$ the image of $\mathcal S_j$ in the $\la$-plane under the mapping $\la = \rho^n$. Put $\Xi := \Xi_0 \cup \Xi_1 \cup \mathcal C_*$. Clearly, $\mathcal I \subset \Xi$. 

Further, fix a sufficiently small $\de > 0$ and define the regions
$$
\mathcal S_{j,\de} := \{ \rho \in \mathcal S_j \colon \exists \rho_0 \in \mathcal S_j \cap \mathcal I \:\text{s.t.}\: |\rho - \rho_0| < \de \}, \quad j = 0, 1.
$$
For $j = 0, 1$, denote by $\Xi_{j,\de}$ the image of $\mathcal S_j$ in the $\la$-plane under the mapping $\la = \rho^n$. Put 
$$
\mathcal H_{\de} := \mathbb C \setminus (\Xi_{1,\de} \cup \Xi_{2,\de} \cup \mathcal C_*).
$$
Let $\la = \rho^n$, $\Theta(\rho) := \diag \{ 1, \rho, \ldots, \rho^{n-1} \}$.
It can be shown in the standard way (see, e.g., the relation (2.1.37) in \cite{Yur02} and the proof of Theorem~2 in \cite{Bond21}) that
\begin{equation} \label{asymptP}
\mathcal P(x, \la) = \Theta(\rho)(I + o(1))[\Theta(\rho)]^{-1}, \quad |\la| \to \iy,
\end{equation}
uniformly with respect to $\la \in \mathcal H_{\de}$.

For sufficiently large values of $R > 0$, define the regions (see Fig.~\ref{img:contours}):
$$
\Xi_R := \{ \la \in \Xi \colon |\la| < R \}, \quad \Xi_R^{\pm} := \{ \la \colon |\la| < R, \, \la \not\in \Xi, \, \pm \mbox{Im}\la > 0 \},
$$
and their boundaries $\ga_R := \partial \Xi_R$, $\ga_R^{\pm} := \partial \Xi_R^{\pm}$ with the counter-clockwise circuit.
Below we consider only such radii $R$ that $\ga_R \subset \mathcal H_{\de}$.

\begin{figure}[h!]
\centering
\begin{tikzpicture}[scale = 0.7]
\draw (1,1) arc(45:135:1.41);
\draw (1,-1) arc(-45:-135:1.41);
\draw (1,1) .. controls (2, 1.8) .. (5,3);
\draw (1,-1) .. controls (2,-1.8) .. (5,-3);
\draw (-1,1) .. controls (-2, 1.8) .. (-5,3);
\draw (-1,-1) .. controls (-2, -1.8) .. (-5,-3);
\draw (0, 0) circle (4);
\draw  (0,0) node{$\Xi_R$};
\draw (0,3) node{$\Xi_R^+$};
\draw (0,-3) node{$\Xi_R^-$};
\filldraw (3,0) circle (1pt);
\draw[dashed] (3,0) circle (0.7);
\filldraw (5.3,0) circle (1pt);
\draw[dashed] (5.3,0) circle (1);
\filldraw (-3,0) circle (1pt);
\draw[dashed] (-3,0) circle (0.7);
\filldraw (-5.3,0) circle (1pt);
\draw[dashed] (-5.3,0) circle (1);
\end{tikzpicture}
\caption{Contours}
\label{img:contours}
\end{figure}

\smallskip

\textsc{Step 2. Contour integration.}
In view of \eqref{relP}, the matrix function $\mathcal P(x, \la)$ is meromorpic in $\la$ with the poles $\mathcal I$. Hence, $\mathcal P(x, \la)$ is analytic in $\Xi_R^{\pm}$. 
Let $\mathcal P_1(x, \la)$ be the first row of $\mathcal P(x, \la)$.
The Cauchy formula implies
\begin{gather*}
\mathcal P_1(x, \la) - e_1^T = -\frac{1}{2\pi \mathrm{i}} \oint\limits_{\ga_R^{\pm}} \frac{\mathcal P_1(x, \xi) - e_1^T}{\la - \xi} \, d\xi, \quad \la \in \Xi_R^{\pm}, \\
\frac{\mathcal P(x, \la) - \mathcal P(x, \mu)}{\la - \mu} = -\frac{1}{2\pi \mathrm{i}} \oint\limits_{\ga_R^{\pm}} \frac{\mathcal P(x, \xi)}{(\la - \xi)(\xi - \mu)} \, d\xi, \quad \la, \mu \in \Xi_R^{\pm}.
\end{gather*}
Consequently,
\begin{gather} \label{smP1}
\mathcal P_1(x, \la) = e_1^T + \frac{1}{2 \pi \mathrm{i}} \oint\limits_{\ga_R} \frac{\mathcal P_1(x, \xi)}{\la - \xi}\, d\xi - \frac{1}{2 \pi \mathrm{i}} \oint\limits_{|\xi| = R} \frac{\mathcal P_1(x, \xi) - e_1^T}{\la - \xi}\, d\xi,  \\ \label{smP2}
\frac{\mathcal P(x, \la) - \mathcal P(x, \mu)}{\la - \mu} =  \frac{1}{2 \pi \mathrm{i}} \oint\limits_{\ga_R} \frac{\mathcal P(x, \xi)}{(\la - \xi)(\xi - \mu)} \, d\xi - \frac{1}{2 \pi \mathrm{i}} \oint\limits_{|\xi| = R} \frac{\mathcal P(x, \xi)}{(\la - \xi)(\xi - \mu)} \, d\xi. 
\end{gather}
Using \eqref{defP}, \eqref{defD}, \eqref{smP1}, and \eqref{smP2}, we derive
\begin{align} \label{smP3}
\phi(x, \la) = \mathcal P_1(x, \la) \tilde \Phi(x, \la) & = \tilde \phi(x, \la) + \frac{1}{2 \pi \mathrm{i}} \oint\limits_{\ga_R} \frac{\mathcal P_1(x, \xi) \tilde \Phi(x, \la)}{\la - \xi}\, d\xi
+ \eps_R^1(x, \la), \\ \nonumber
D(x, \mu, \la) - \tilde D(x, \mu, \la) & = \frac{[\Phi(x, \mu)]^{-1} (\mathcal P(x, \la) - \mathcal P(x, \mu)) \tilde \Phi(x, \la)}{\la - \mu} \\ \nonumber & = 
\frac{1}{2 \pi \mathrm{i}} \oint\limits_{\ga_R} \frac{[\Phi(x, \mu)]^{-1} \Phi(x, \xi)}{\xi - \mu} \frac{[\tilde \Phi(x, \xi)]^{-1} \tilde \Phi(x, \la)}{\la - \xi} \, d\xi + \eps_R^2(x, \mu, \la) \\ \label{smP4} & =
\frac{1}{2 \pi \mathrm{i}} \oint\limits_{\ga_R} D(x, \mu, \xi) \tilde D(x, \xi, \la) \, d\xi + \eps_R^2(x, \mu, \la),
\end{align}
where
\begin{align*}
\eps_R^1(x, \la) := & - \frac{1}{2 \pi \mathrm{i}} \oint\limits_{|\xi| = R} \frac{(\mathcal P_1(x, \xi) - e_1^T) \tilde \Phi(x, \la)}{\la - \xi}\, d\xi, \\
\eps_R^2(x, \mu, \la) := & - \frac{1}{2 \pi \mathrm{i}} \oint\limits_{|\xi| = R} \frac{[\Phi(x, \mu)]^{-1} \mathcal P(x, \xi) \tilde \Phi(x, \la)}{(\la - \xi) (\xi - \mu)} \, d\xi.
\end{align*}
It follows from \eqref{asymptP} that 
\begin{equation} \label{limeps}
\lim_{\substack{R \to \iy \\ \ga_R \subset \mathcal H_{\de}}} \eps_R^1(x, \la) = 0, \quad \lim_{\substack{R \to \iy \\ \ga_R \subset \mathcal H_{\de}}} \eps_R^2(x, \mu, \la) = 0.
\end{equation}

\smallskip

\textsc{Step 3. Residues}.
Using the first row of \eqref{relP}:
$$
\mathcal P_1(x, \la) = \phi(x, \la) J_0^{-1} [\tilde \Phi^{\star}(x,\la)]^T J
$$
and the Residue theorem, we obtain
\begin{equation} \label{PRes}
\frac{1}{2 \pi \mathrm{i}} \oint\limits_{\ga_R} \frac{\mathcal P_1(x, \xi) \tilde \Phi(x, \la)}{\la - \xi}\, d\xi = \sum_{\la_0 \in \mathcal I_R} \Res_{\xi = \la_0} \phi(x, \xi) \tilde D(x, \xi, \la). 
\end{equation}

Using \eqref{smP3}, \eqref{limeps}, and \eqref{PRes}, we get
\begin{equation} \label{smphi1}
\phi(x, \la) = \tilde \phi(x, \la) + \sum_{\la_0 \in \mathcal I} (\phi_{\langle -1 \rangle}(x, \la_0) \tilde D_{\langle 0 \rangle}(x, \la_0, \la) + \phi_{\langle 0 \rangle}(x, \la_0)  \tilde D_{\langle -1 \rangle}(x, \la_0, \la)).
\end{equation}
It follows from \eqref{relNPhi} that
\begin{equation} \label{smphi2}
\phi_{\langle -1 \rangle}(x, \la_0) = \phi_{\langle 0 \rangle}(x, \la_0) \mathcal N(\la_0).
\end{equation}
Substituting \eqref{DN1} for $\tilde D_{\langle -1\rangle}(x, \la_0, \la)$ and \eqref{smphi2} into \eqref{smphi1}, we derive the relation \eqref{contphi}.

It remains to prove \eqref{contD}. Using Lemma~\ref{lem:D}, we derive
\begin{align} \nonumber
    \Res_{\xi = \la_0} D(x, \mu, \xi) \tilde D(x, \xi, \la) & = 
    [D(x, \mu, \xi)]_{|\xi = \la_0}^{\langle -1 \rangle} \tilde D_{\langle 0 \rangle}(x, \la_0, \la) + [D(x, \mu, \xi)]_{|\xi = \la_0}^{\langle 0 \rangle} \tilde D_{\langle -1 \rangle}(x, \la_0, \la) \\ \label{ResD} & =
    [D(x, \mu, \xi)]_{|\xi = \la_0}^{\langle 0 \rangle} \hat {\mathcal N}(\la_0) \tilde D_{\langle 0 \rangle}(x, \la_0, \la).
\end{align}
Combining \eqref{smP4}, \eqref{limeps}, \eqref{ResD} all together and applying the Residue theorem, we arrive at \eqref{contD}.

Now \eqref{contphi} and \eqref{contD} are proved only for $\la, \mu \in (\mathbb C \setminus \Xi)$. Using analytic continuation, we conclude that these relations hold for $\la, \mu \in (\mathbb C \setminus \mathcal I)$.
\end{proof}

Our next goal is to obtain an infinite system of linear equations with respect to some entries of $\phi_{\langle 0 \rangle}(\la_0)$, $\la_0 \in \mathcal I$.
Introduce the ordered set
$$
V := \{ (l,k,\eps) \colon l \ge 1, \, k \in \{ 1, \ldots, n-1 \}, \, \eps \in \{ 0, 1 \}.
$$
For $v = (l,k,\eps)$, $v_0 = (l_0,k_0,\eps_0)$, $v,v_0 \in V$, we mean that $v < v_0$ if $l < l_0$ or $(l = l_0 \: \text{and} \: k < k_0)$ or $(l = l_0, \, k= k_0 \: \text{and} \: \eps < \eps_0)$.
Denote
\begin{gather} \label{lalk}
\la_{l,k,0} := \la_{l,k}, \quad \la_{l,k,1} := \tilde \la_{l,k}, \quad \mathcal N_0(\la_0) := \mathcal N(\la_0), \quad \mathcal N_1(\la_0) := \tilde {\mathcal N}(\la_0), \\ \label{defphilk}
\vv_{l,k,\eps}(x) := \Phi_{k+1,\langle 0 \rangle}(x, \la_{l,k,\eps}), \quad  
\tilde \vv_{l,k,\eps}(x) := \tilde \Phi_{k+1,\langle 0 \rangle}(x, \la_{l,k,\eps}),  \\ \label{defPlk}
\tilde P_{l,k,\eps}(x, \la) := e_{k+1}^T \mathcal N_{\eps}(\la_{l,k,\eps}) \tilde D_{\langle 0 \rangle}(x, \la_{l,k,\eps}, \la), \\
\label{defG}
\tilde G_{(l,k,\eps), (l_0,k_0,\eps_0)} (x) := [\tilde P_{l,k,\eps}(x,\la)]_{\la = \la_{l_0,k_0,\eps_0}}^{\langle 0 \rangle} e_{k_0+1},
\end{gather}
and similarly define $P_{l,k,\eps}(x, \la)$, $G_{(l,k,\eps),(l_0,k_0,\eps_0)}(x)$. Using these notations, we obtain the following corollary of Lemma~\ref{lem:cont}.

\begin{cor} \label{cor:inf}
The following relations hold:
\begin{gather} \label{findphi}
\phi(x, \la) = \tilde \phi(x, \la) + \sum_{(l,k,\eps) \in V} (-1)^{\eps} \vv_{l,k,\eps}(x) \tilde P_{l,k,\eps}(x, \la), \\ \label{infphi}
\vv_{l_0, k_0,\eps_0}(x) = \tilde \vv_{l_0,k_0,\eps_0}(x) + \sum_{(l,k,\eps) \in V}(-1)^{\eps} \vv_{l,k,\eps}(x) \tilde G_{(l,k,\eps), (l_0, k_0,\eps_0)}(x), \\ \label{infG}
G_{(l_0,k_0,\eps_0), (l_1,k_1,\eps_1)}(x) - \tilde G_{(l_0,k_0,\eps_0), (l_1,k_1,\eps_1)}(x) = \sum_{(l,k,\eps) \in V} (-1)^{\eps} G_{(l_0,k_0,\eps_0),(l,k,\eps)}(x) \tilde G_{(l, k,\eps), (l_1, k_1,\eps_1)}(x),
\end{gather}
where $x \in [0, 1]$, $(l_0,k_0,\eps_0), (l_1,k_1,\eps_1) \in V$.
\end{cor}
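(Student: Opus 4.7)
The plan is to derive \eqref{findphi}, \eqref{infphi}, and \eqref{infG} from Lemma~\ref{lem:cont} by a combinatorial re-indexing of the sums over $\la_0 \in \mathcal I$ by the ordered set $V$. First, I would split $\hat{\mathcal N}(\la_0) = \mathcal N(\la_0) - \tilde{\mathcal N}(\la_0)$ in \eqref{contphi}, separating the series into one part over $\Lambda$ carrying $\mathcal N$ (to be labeled $\eps = 0$) and one part over $\tilde\Lambda$ carrying $-\tilde{\mathcal N}$ (to be labeled $\eps = 1$); the sign on the latter accounts for the factor $(-1)^\eps$ appearing in \eqref{findphi}. Since $M(\la)$ is unit lower triangular with entire unit part, both $\mathcal N$ and $\tilde{\mathcal N}$ are strictly lower triangular, so I may decompose
\[
\mathcal N_\eps(\la_0) = \sum_{k = 1}^{n-1} e_{k+1}\bigl(e_{k+1}^T \mathcal N_\eps(\la_0)\bigr).
\]
By Lemma~\ref{lem:N2}(i), the row $e_{k+1}^T \mathcal N_\eps(\la_0)$ vanishes unless $\la_0 \in \Lambda_k$ (respectively $\tilde\Lambda_k$), so the nonzero $(\la_0, k)$-contributions correspond bijectively, without double-counting, to triples $(l,k,\eps) \in V$ through $\la_0 = \la_{l,k,\eps}$.

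Combining the two decompositions with the definitions \eqref{defphilk} and \eqref{defPlk} identifies each nonzero summand of \eqref{contphi} with $(-1)^\eps \vv_{l,k,\eps}(x) \tilde P_{l,k,\eps}(x, \la)$, proving \eqref{findphi}. Formula \eqref{infphi} is then obtained by applying the operator $[\,\cdot\,]^{\langle 0 \rangle}_{\la = \la_{l_0,k_0,\eps_0}} e_{k_0+1}$ to both sides of \eqref{findphi}: by the definitions \eqref{defphilk} and \eqref{defG}, this sends $\phi(x, \la) \mapsto \vv_{l_0,k_0,\eps_0}(x)$, $\tilde\phi(x, \la) \mapsto \tilde\vv_{l_0,k_0,\eps_0}(x)$, and each summand $\tilde P_{l,k,\eps}(x, \la) \mapsto \tilde G_{(l,k,\eps),(l_0,k_0,\eps_0)}(x)$ (noting that $\vv_{l,k,\eps}(x)$ is independent of $\la$).

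The derivation of \eqref{infG} follows the same pattern: starting from \eqref{contD}, the row decomposition of $\mathcal N_\eps$ rewrites the right-hand side as
\[
\sum_{(l,k,\eps) \in V} (-1)^\eps [D(x, \mu, \xi)]^{\langle 0 \rangle}_{\xi = \la_{l,k,\eps}} e_{k+1} \tilde P_{l,k,\eps}(x, \la),
\]
after which pre-multiplying by $e_{k_0+1}^T \mathcal N_{\eps_0}(\la_{l_0,k_0,\eps_0}) [\,\cdot\,]^{\langle 0 \rangle}_{\mu = \la_{l_0,k_0,\eps_0}}$ and post-multiplying by $[\,\cdot\,]^{\langle 0 \rangle}_{\la = \la_{l_1,k_1,\eps_1}} e_{k_1+1}$ converts the left-hand side into $G_{(l_0,k_0,\eps_0),(l_1,k_1,\eps_1)}(x) - \tilde G_{(l_0,k_0,\eps_0),(l_1,k_1,\eps_1)}(x)$ and each summand into $(-1)^\eps G_{(l_0,k_0,\eps_0),(l,k,\eps)}(x) \tilde G_{(l,k,\eps),(l_1,k_1,\eps_1)}(x)$. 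The principal technicality in all three cases will be justifying that the $[\,\cdot\,]^{\langle 0 \rangle}$-extraction commutes with the infinite summation; this is supplied by the uniform convergence clause of Lemma~\ref{lem:cont} on compact subsets of $\mathbb C \setminus \mathcal I$, since $[\,\cdot\,]^{\langle 0 \rangle}$ at a pole can be realized as a contour integral over a small surrounding circle, which commutes with the summation by dominated convergence.
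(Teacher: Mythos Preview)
Your proposal is correct and follows essentially the same route as the paper: the paper likewise invokes Lemma~\ref{lem:N2} to split $\hat{\mathcal N}(\la_0)$ into the rows $e_{k+1}e_{k+1}^T\mathcal N_\eps(\la_{l,k,\eps})$ and re-index over $V$ to obtain \eqref{findphi}, then extracts the $(k_0+1)$-th coefficient at $\la=\la_{l_0,k_0,\eps_0}$ for \eqref{infphi}, and performs the analogous two-sided extraction on \eqref{contD} for \eqref{infG}. Your explicit justification---realizing $[\,\cdot\,]^{\langle 0\rangle}$ as a contour integral and appealing to the uniform convergence from Lemma~\ref{lem:cont}---is in fact more careful than the paper's terse ``putting $\la=\la_{l_0,k_0,\eps_0}$''.
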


\begin{proof}
Taking Lemma~\ref{lem:N2} on the structure of $\mathcal N(\la_0)$ and $\tilde{\mathcal N}(\la_0)$ into account, we rewrite \eqref{contphi} in the form
\begin{equation*} 
\phi(x, \la) = \tilde \phi(x, \la) + \sum_{(l,k,\eps) \in V} (-1)^{\eps} \Phi_{k+1, \langle 0 \rangle}(x, \la_{l, k,\eps}) e_{k+1}^T \mathcal N_\eps(\la_{l,k,\eps}) \tilde D_{\langle 0 \rangle }(x, \la_{l,k,\eps}, \la). 
\end{equation*}
Using \eqref{defphilk} and \eqref{defPlk}, we arrive at \eqref{findphi}. Taking the $(k_0+1)$-th entry in the relation \eqref{findphi}, putting $\la = \la_{l_0,k_0,\eps_0}$ and using \eqref{defphilk}, \eqref{defG}, we readily obtain \eqref{infphi}.

Analogously, we represent \eqref{contD} as follows:
$$
D(x, \mu, \la) - \tilde D(x, \mu, \la) = \sum_{(l,k,\eps) \in V} (-1)^{\eps} [D(x, \mu,\xi)]_{\xi = \la_{l,k,\eps}}^{\langle 0 \rangle} e_{k+1} e_{k+1}^T \mathcal N_{\eps}(\la_{l,k,\eps}) \tilde D_{\langle 0 \rangle}(x, \la_{l,k,\eps}, \la).
$$
Passing from $D(x, \mu,\la)$ and $\tilde D(x, \mu, \la)$ to $P_{l_0,k_0,\eps_0}(x, \la)$ and $\tilde P_{l_0,k_0,\eps_0}(x, \la)$, respectively, we derive
$$
P_{l_0,k_0,\eps_0}(x, \la) - \tilde P_{l_0,k_0,\eps_0}(x, \la) = \sum_{(l,k,\eps) \in V} (-1)^{\eps} [P_{l_0,k_0,\eps_0}(x, \xi)]_{\xi = \la_{l,k,\eps}}^{\langle 0 \rangle} e_{k+1} \tilde P_{l,k,\eps}(x, \la).
$$
Using \eqref{defG} and the analogous relation for $G_{(l,k,\eps), (l_0,k_0,\eps_0)}(x)$, we finally arrive \eqref{infG}.
\end{proof}

The relations \eqref{infphi} can be considered as an infinite linear system with respect to $\vv_{l,k,\eps}(x)$, $(l,k,\eps) \in V$. However, it is inconvenient to use \eqref{infphi} as the main equation system for the inverse problem, because the series in \eqref{infphi} converges only ``with brackets'':
$$
\sum_{(l,k,\eps) \in V} = \sum_{(l,k)} \left( \sum_{\eps = 0, 1} (\dots)\right).
$$
Therefore, in the next section, we transform the system \eqref{infphi} to a linear equation in a suitable Banach space. The relation \eqref{infG} will be used to prove the unique solvability of the main equation.

\begin{remark} \label{rem:mult}
If $\tilde {\mathcal L} \not\in W$, that is, the poles of $\tilde M(\la)$ are not necessarily simple, then this influences on the calculation of the residues in \eqref{PRes}.
Consequently,
we obtain the following relation instead of \eqref{contphi}:
\begin{align} \nonumber
\phi(x, \la) = \tilde \phi(x, \la) + \sum_{\la_0 \in \mathcal I} \biggl[ & \phi_{\langle 0 \rangle}(x, \la_0) (\mathcal N(\la_0) \tilde D_{\langle 0 \rangle}(x, \la_0, \la) + \tilde D_{\langle -1 \rangle}(x, \la_0, \la)) \\ \label{contphi1} & + \sum_{k = 1}^{m_{\la_0}-1} \phi_{\langle k \rangle}(x, \la_0) \tilde D_{\langle -(k+1) \rangle}(x, \la_0, \la)\biggr],
\end{align}
where $m_{\la_0}$ is the multiplicity of $\la_0 \in \tilde \Lambda$. Using \eqref{contphi1}, one can derive an infinite system analogous to \eqref{infphi}, containing not only entries of the vectors $\phi_{\langle 0 \rangle}(x, \la_0)$ but also of $\phi_{\langle k \rangle}(x, \la_0)$ for $k = \overline{1,m_{\la_0}-1}$.
\end{remark}

\subsection{Linear equation in a Banach space} \label{sec:Banach}

Define the numbers $\{ \xi_l \}$, which characterize ``the difference'' of the two spectral data sets $\{ \la_0, \mathcal N(\la_0) \}_{\la_0 \in \Lambda}$ and $\{ \tilde \la_0, \tilde {\mathcal N}(\tilde \la_0) \}_{\tilde \la_0 \in \tilde \Lambda}$:
\begin{equation} \label{defxi}
\xi_l := \sum_{k = 1}^{n-1} \left( |\la_{l,k} - \tilde \la_{l,k}| + \sum_{j = k+1}^n |\mathcal N_{j,k}(\la_{l,k}) - \tilde{\mathcal N}_{j,k}(\tilde \la_{l,k})| l^{p_{k,0} - p_{k+1,0}}\right) l^{1-n}, \quad l \ge 1.
\end{equation}

Taking Corollary~\ref{cor:N} into account, we reduce \eqref{defxi} to the following form for all sufficiently large values of $l$:
\begin{equation} \label{relxi}
    \xi_l = \sum_{k = 1}^{n-1} \left( |\la_{l,k} - \tilde \la_{l,k}| + |\beta_{l,k} - \tilde \beta_{l,k}| l^{p_{k,0} - p_{k+1,0}}\right) l^{1-n}.
\end{equation}

Relation~\eqref{relxi} together with the asymptotics \eqref{asymptla} and \eqref{asymptbe} imply $\{ \xi_l \} \in l_2$.

\begin{lem} \label{lem:est}
The following estimates hold for $(l,k,\eps), (l_0,k_0,\eps_0) \in V$:
\begin{gather*}
    |\vv_{l,k,\eps}(x)| \le C w_{l,k}(x), \quad
    |\vv_{l,k,0}(x) - \vv_{l,k,1}(x)| \le C w_{l,k}(x) \xi_l, \\
    |G_{(l,k,\eps),(l_0,k_0,\eps_0)}(x)| \le \frac{C}{|l - l_0| + 1} \cdot \frac{w_{l_0,k_0}(x)}{w_{l,k}(x)}, \\
    |G_{(l,k,0),(l_0,k_0,\eps_0)}(x) - G_{(l,k,1),(l_0,k_0,\eps_0)}(x)| \le \frac{C \xi_l}{|l - l_0| + 1} \cdot \frac{w_{l_0,k_0}(x)}{w_{l,k}(x)}, \\
    |G_{(l,k,\eps),(l_0,k_0,0)}(x) - G_{(l,k,\eps),(l_0,k_0,1)}(x)| \le \frac{C \xi_{l_0}}{|l - l_0| + 1} \cdot \frac{w_{l_0,k_0}(x)}{w_{l,k}(x)}, \\
    |G_{(l,k,0),(l_0,k_0,0)}(x) - G_{(l,k,0),(l_0,k_0,1)}(x)
    - G_{(l,k,1),(l_0,k_0,0)}(x) + G_{(l,k,1),(l_0,k_0,1)}(x) \le \\ \frac{C \xi_l \xi_{l_0}}{|l-l_0| + 1} \cdot \frac{w_{l_0,k_0}(x)}{w_{l,k}(x)},
\end{gather*}
where
$$
w_{l,k}(x) := l^{-p_{k+1,0}} \exp(-xl \cot (k\pi/n)),
$$
and the constant $C$ does not depend on $x, l,\eps,k, l_0,\eps_0,k_0$.
\end{lem}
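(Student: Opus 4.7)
The plan is to derive all six estimates from the Birkhoff-type asymptotic solutions of $\ell_n(y) = \la y$ constructed by Savchuk and Shkalikov~\cite{SS20}, combined with the precise asymptotics \eqref{asymptla}, \eqref{asymptbe} for the spectral data. The weight $w_{l,k}(x) = l^{-p_{k+1,0}}\exp(-xl\cot(k\pi/n))$ encodes exactly the leading-order behavior of $\Phi_{k+1,\langle 0 \rangle}(x,\la_{l,k,\eps})$: the power $l^{-p_{k+1,0}}$ arises from the normalization of the Weyl solution at $x=0$, while the exponential factor is the dominant term when $\sqrt[n]{\la_{l,k,\eps}}$ lies on the appropriate ray in $\mathcal S_0$ or $\mathcal S_1$ (see \eqref{defSj}).

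First, I would establish the pointwise bound $|\vv_{l,k,\eps}(x)| \le C w_{l,k}(x)$ by substituting $\la = \la_{l,k,\eps}$ into the Birkhoff asymptotic representation of $\Phi_{k+1,\langle 0 \rangle}(x,\la)$; the same argument applies verbatim to the tilde problem. For the operator kernels, the key observation is that, by \eqref{DN3}, the quantity $\tilde D_{\langle 0 \rangle}(x,\la_0,\la)$ is essentially a Green-type expression, and when one contracts with $\mathcal N_\eps(\la_{l,k,\eps})$ via \eqref{defPlk} and evaluates its zeroth-order Laurent coefficient at $\la_{l_0,k_0,\eps_0}$ via \eqref{defG}, the asymptotic \eqref{asymptbe} for the weight numbers provides precisely the power of $l$ needed to cancel the prefactor in $\xi_l$, while the Hilbert-type kernel structure $(\la_{l,k,\eps}-\la_{l_0,k_0,\eps_0})^{-1}$ combined with the near-arithmetic progression of $\rho_{l,k}$ yields the off-diagonal decay $1/(|l-l_0|+1)$. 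The ratio $w_{l_0,k_0}(x)/w_{l,k}(x)$ then arises from pairing one Birkhoff exponential from $\tilde\phi$ at $\la_{l_0,k_0,\eps_0}$ with one from $\tilde\phi^{\star}$ at $\la_{l,k,\eps}$.

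For the single-difference estimates involving $\xi_l$ or $\xi_{l_0}$, I would apply a first-order mean-value argument: write $\vv_{l,k,0}(x) - \vv_{l,k,1}(x)$ as the increment of $\Phi_{k+1,\langle 0 \rangle}(x,\la)$ between $\la_{l,k}$ and $\tilde\la_{l,k}$, which is controlled by $|\la_{l,k}-\tilde\la_{l,k}|$ (the derivative of the Birkhoff asymptotic contributing the $l^{n-1}$ factor absorbed into the normalization of $\xi_l$ in \eqref{defxi}), plus the increment from $\Phi$ to $\tilde\Phi$ at a common point, which depends analytically on the coefficients and is governed by the same $\xi_l$. Applying this scheme separately in each of the two spectral indices yields the single-difference bounds for $G$; the weight-number contribution $|\beta_{l,k}-\tilde\beta_{l,k}|\,l^{p_{k,0}-p_{k+1,0}}$ entering $\xi_l$ is designed precisely to match the linear dependence of $\tilde P_{l,k,\eps}$ on $\mathcal N_\eps(\la_{l,k,\eps})$.

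The main obstacle will be the final double-difference estimate, where the triangle inequality yields only the sum $C(\xi_l+\xi_{l_0})$ rather than the product $C\xi_l\xi_{l_0}$. The remedy is to treat the expression as a second-order finite difference of the jointly analytic function $(\la,\mu)\mapsto e_{k+1}^T\mathcal N(\la)\tilde D_{\langle 0 \rangle}(x,\la,\mu)e_{k_0+1}$ evaluated at the four pairs built from $\la_{l,k},\tilde\la_{l,k}$ and $\la_{l_0,k_0},\tilde\la_{l_0,k_0}$, and to apply the two-variable identity $f(\la_0,\mu_0)-f(\la_0,\mu_1)-f(\la_1,\mu_0)+f(\la_1,\mu_1)=(\la_0-\la_1)(\mu_0-\mu_1)\,\partial_\la\partial_\mu f$ at some intermediate pair, together with an analogous splitting for the contributions of $\beta_{l,k}-\tilde\beta_{l,k}$ and $\beta_{l_0,k_0}-\tilde\beta_{l_0,k_0}$. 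This extracts the required product structure $\xi_l\xi_{l_0}$ while preserving the $1/(|l-l_0|+1)$ decay and the weight ratio $w_{l_0,k_0}(x)/w_{l,k}(x)$ from the preceding steps.
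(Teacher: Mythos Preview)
The paper does not actually prove this lemma: it states that ``the proof of Lemma~\ref{lem:est} repeats the technique of \cite[Section 2.3.3]{Yur02}, so we omit it.'' Your outline is precisely that standard technique --- Birkhoff asymptotics for $\Phi_{k+1}$ and $\Phi^\star_{n-k+1}$ via Propositions~\ref{prop:y}--\ref{prop:Phi}, the Hilbert-kernel structure $(\la_{l,k,\eps}-\la_{l_0,k_0,\eps_0})^{-1}$ giving $1/(|l-l_0|+1)$, and finite-difference/Cauchy-type estimates for the $\xi$-terms --- so in substance your plan coincides with what the paper invokes.

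Two small corrections are worth making before you carry it out. First, by \eqref{defphilk} both $\vv_{l,k,0}$ and $\vv_{l,k,1}$ are values of the \emph{same} function $\Phi_{k+1,\langle 0\rangle}(x,\cdot)$ at $\la_{l,k}$ and $\tilde\la_{l,k}$; there is no passage ``from $\Phi$ to $\tilde\Phi$'' here, so the bound $|\vv_{l,k,0}-\vv_{l,k,1}|\le Cw_{l,k}(x)\xi_l$ comes purely from the $\la$-increment (the $|\beta_{l,k}-\tilde\beta_{l,k}|$ contribution to $\xi_l$ enters only in the $G$-estimates through $\mathcal N_\eps$). Second, the real-variable mean-value identity you wrote for the second difference does not hold verbatim for complex-analytic functions; replace it by the integral representation
\[
f(\la_0,\mu_0)-f(\la_0,\mu_1)-f(\la_1,\mu_0)+f(\la_1,\mu_1)=\int_0^1\!\!\int_0^1 \partial_\la\partial_\mu f\bigl(\la_1+s(\la_0-\la_1),\,\mu_1+t(\mu_0-\mu_1)\bigr)\,ds\,dt\cdot(\la_0-\la_1)(\mu_0-\mu_1),
\]
or equivalently by Cauchy's formula on small circles around $\rho_{l,k,\eps}$ (using the analyticity in Propositions~\ref{prop:y}--\ref{prop:Phi}), which yields the same product factor $\xi_l\xi_{l_0}$ with uniform control of the second derivatives.
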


The proof of Lemma~\ref{lem:est} repeats the technique of \cite[Section 2.3.3]{Yur02}, so we omit it. 
The similar estimates are valid for $\tilde \vv_{l,k,\eps}(x)$ and $\tilde G_{(l_0,k_0,\eps_0),(l,k,\eps)}(x)$.

Put $\theta_l := \xi_l^{-1}$ if $\xi_l \ne 0$ and $\theta_l = 0$ otherwise. Introduce the notations
\begin{equation} \label{defpsi}
\begin{bmatrix}
\psi_{l,k,0}(x) \\ \psi_{l,k,1}(x)
\end{bmatrix} := 
w_{l,k}^{-1}(x)
\begin{bmatrix}
\theta_l & -\theta_l \\ 0 & 1
\end{bmatrix}
\begin{bmatrix}
\vv_{l,k,0}(x) \\ \vv_{l,k,1}(x)
\end{bmatrix}, 
\end{equation}
\begin{multline} \label{defR}
\begin{bmatrix}
R_{(l_0,k_0,0),(l,k,0)}(x) & R_{(l_0,k_0,0),(l,k,1)}(x) \\
R_{(l_0,k_0,1),(l,k,0)}(x) & R_{(l_0,k_0,1),(l,k,1)}(x)
\end{bmatrix} := \\ 
\frac{w_{l,k}(x)}{w_{l_0,k_0}(x)}
\begin{bmatrix}
\theta_{l_0} & -\theta_{l_0} \\ 0 & 1
\end{bmatrix}
\begin{bmatrix}
G_{(l,k,0),(l_0,k_0,0)}(x) & G_{(l,k,1),(l_0,k_0,0)}(x) \\
G_{(l,k,0),(l_0,k_0,1)}(x) & G_{(l,k,1),(l_0,k_0,1)}(x)
\end{bmatrix}
\begin{bmatrix}
\xi_{l} & 1 \\ 0 & -1
\end{bmatrix}.
\end{multline}
For brevity, put $\psi_v(x) := \psi_{l,k,\eps}(x)$, $R_{v_0,v}(x) := R_{(l_0,k_0,\eps_0),(l,k,\eps)}(x)$, $v = (l,k,\eps)$, $v_0 = (l_0,k_0,\eps_0)$, $v,v_0 \in V$. The functions $\tilde \psi_v(x)$ and $\tilde R_{v_0,v}(x)$ are defined analogously.

Using \eqref{infphi}, \eqref{infG}, and the above notations, we obtain
\begin{gather} \label{sumpsi}
    \psi_{v_0}(x) = \tilde \psi_{v_0}(x) + \sum_{v \in V} \tilde R_{v_0,v}(x) \psi_v(x), \quad v_0 \in V, \\ \label{sumR}
    R_{v_1,v_0}(x) - \tilde R_{v_1,v_0}(x) = \sum_{v \in V} \tilde R_{v_1,v}(x) R_{v,v_0}(x), \quad v_1, v_0 \in V.
\end{gather}

Lemma~\ref{lem:est} yields the estimates
\begin{equation} \label{estpsiR}
|\psi_v(x)| \le C, \quad
|R_{v_0,v}(x)| \le \frac{C\xi_l}{|l-l_0| + 1}, \quad v,v_0 \in V,
\end{equation}
and the similar estimates for $\tilde \psi_v(x)$, $\tilde R_{v_0,v}(x)$. Consequently, the Cauchy-Bunyakovsky-Schwarz inequality
\begin{equation} \label{CBS}
\sum_{l} \frac{\xi_l}{|l-l_0| + 1} \le \left(\sum_{l} \xi_l^2 \right)^{1/2} \left(\sum_l \frac{1}{(|l-l_0| + 1)^2}\right)^{1/2} < \iy,
\end{equation}
implies the absolute convergence of the series in \eqref{sumpsi} and \eqref{sumR}.

Consider the Banach space $m$ of bounded infinite sequences $\al = [\al_v]_{v \in V}$ with the norm $\| \al \|_m = \sum\limits_{v \in V} |\al_v|$. Obviously, $\psi(x), \tilde \psi(x) \in m$ for each fixed $x \in [0,1]$. Define the linear operator $R(x) = [R_{v_0,v}(x)]_{v_0, v \in V}$ acting on an element $\al = [\al_v]_{v \in V} \in m$ by the following rule:
\begin{equation} \label{Ral}
[R(x) \al]_{v_0} = \sum_{v \in V} R_{v_0,v}(x) \al_v, \quad v_0 \in V.
\end{equation}
The operator $\tilde R(x) = [\tilde R_{v_0,v}(x)]_{v_0,v \in V}$ is defined similarly. It follows from \eqref{estpsiR} and \eqref{CBS} that the operators $R(x)$, $\tilde R(x)$ are bounded from $m$ to $m$ for each fixed $x \in [0,1]$. Denote by $\mathbf{I}$ the unit operator in $m$.

Using the introduced notations, we obtain the following theorem on the main equation and its unique solvability.

\begin{thm} \label{thm:main}
For each fixed $x\in[0,1]$, the linear operator $R(x)$ is compact in $m$ and can be approximated by finite-rank operators: $R(x) = \lim\limits_{N \to \iy} R^N(x)$. The same properties are valid for $\tilde R(x)$. Furthermore, the following relation holds
\begin{equation} \label{main}
    (\mathbf{I} - \tilde R(x)) \psi(x) = \tilde \psi(x), \quad x \in [0,1],
\end{equation}
which is called \textit{the main equation} of the inverse problem. The operator $(\mathbf{I} + \tilde R(x))$ has a bounded inverse of form
\begin{equation} \label{inv}
(\mathbf{I} - \tilde R(x))^{-1} = \mathbf{I} + R(x).
\end{equation}
Thus, the main equation~\eqref{main} is uniquely solvable in $m$ for each fixed $x \in [0,1]$.
\end{thm}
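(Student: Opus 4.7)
The plan is to verify the four assertions in turn, leaning on the matrix-entry bounds \eqref{estpsiR} supplied by Lemma~\ref{lem:est} and on the linear identities \eqref{sumpsi}, \eqref{sumR} from Corollary~\ref{cor:inf}. Throughout, the operator $R(x)$ is realised via its matrix representation \eqref{Ral}, and its norm on $m$ equals $\sup_{v_0}\sum_{v}|R_{v_0,v}(x)|$.

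\textbf{Boundedness.} Combining \eqref{estpsiR} with the Cauchy--Bunyakovsky--Schwarz estimate \eqref{CBS} and the already established fact $\{\xi_l\}\in l_2$, one obtains a uniform row-sum bound
\[
\sum_{v\in V}|R_{v_0,v}(x)|\le C\sum_{l\ge 1}\frac{\xi_l}{|l-l_0|+1}\le C\|\{\xi_l\}\|_{l_2}\Bigl(\sum_{l}(|l-l_0|+1)^{-2}\Bigr)^{1/2},
\]
independent of $v_0\in V$ and $x\in[0,1]$. The same argument applies to $\tilde R(x)$, so both operators are bounded on $m$.

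\textbf{Compactness via finite-rank approximation.} I would define $\tilde R^N(x)$ by zeroing out every entry $\tilde R_{v_0,v}(x)$ with $l>N$ or $l_0>N$, which makes $\tilde R^N(x)$ a finite-rank operator, and then estimate $\|\tilde R(x)-\tilde R^N(x)\|_{m\to m}$ by splitting the residual matrix into rows with $l_0\le N$ and rows with $l_0>N$. For $l_0\le N$ only indices $l>N$ survive and Cauchy--Schwarz gives $\sum_{l>N}\xi_l/(|l-l_0|+1)\le\bigl(\sum_{l>N}\xi_l^2\bigr)^{1/2}\cdot C$, which vanishes as $N\to\iy$. For $l_0>N$ the whole row is retained; splitting it into $|l-l_0|\le l_0/2$, bounded by $\bigl(\sum_{l\ge l_0/2}\xi_l^2\bigr)^{1/2}\cdot C$, and $|l-l_0|>l_0/2$, bounded by $C l_0^{-1/2}\|\{\xi_l\}\|_{l_2}$, both pieces also tend to zero. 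Hence $\tilde R(x)$ is a norm-limit of finite-rank operators and therefore compact; the reasoning for $R(x)$ is identical. This is the main technical step, and it is precisely where the $l_2$-summability of $\{\xi_l\}$ is decisive: a single index decay of $\xi_l$ alone would not suffice to handle both the $l>N$ and the $l_0>N$ tails.

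\textbf{Main equation and inversion formula.} The identity \eqref{main} is merely \eqref{sumpsi} rewritten as an equality in $m$; absolute convergence of the series, uniform in $v_0$, is guaranteed by the same estimates used above, and $\psi(x),\tilde\psi(x)\in m$ follows from Lemma~\ref{lem:est} together with \eqref{defpsi}. For the inverse, I would rewrite \eqref{sumR} as the operator identity $R(x)-\tilde R(x)=\tilde R(x)R(x)$ and compute
\[
(\mathbf{I}-\tilde R(x))(\mathbf{I}+R(x))=\mathbf{I}+R(x)-\tilde R(x)-\tilde R(x)R(x)=\mathbf{I},
\]
so that $\mathbf{I}+R(x)$ is a right inverse of $\mathbf{I}-\tilde R(x)$. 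Since $\tilde R(x)$ is compact, $\mathbf{I}-\tilde R(x)$ is a Fredholm operator of index zero, and the existence of a right inverse forces two-sided invertibility with $(\mathbf{I}-\tilde R(x))^{-1}=\mathbf{I}+R(x)$. Unique solvability of \eqref{main} in $m$ follows immediately.
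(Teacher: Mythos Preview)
Your argument is correct and follows the same overall strategy as the paper: bound the matrix entries via \eqref{estpsiR}, approximate by finite-rank operators to get compactness, read off \eqref{main} from \eqref{sumpsi}, and deduce \eqref{inv} from the operator identity $R(x)-\tilde R(x)=\tilde R(x)R(x)$ coming from \eqref{sumR}.

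There is one place where you work harder than necessary. In the paper the truncation $R^N(x)$ is defined by restricting only the \emph{column} index, i.e.\ $[R^N(x)\alpha]_{v_0}=\sum_{v\in V^N}R_{v_0,v}(x)\alpha_v$ for all $v_0\in V$; this is already finite rank since its range lies in the span of finitely many columns. Then $\|R(x)-R^N(x)\|_{m\to m}=\sup_{l_0}\sum_{l>N}\tfrac{C\xi_l}{|l-l_0|+1}$, and a single application of Cauchy--Schwarz as in \eqref{CBS} bounds this by $C\bigl(\sum_{l>N}\xi_l^2\bigr)^{1/2}\to 0$. Your double truncation in both $v$ and $v_0$ forces you to control the full rows with $l_0>N$, which is why you needed the additional splitting $|l-l_0|\le l_0/2$ versus $|l-l_0|>l_0/2$; this is correct but avoidable. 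On the other hand, your explicit invocation of the Fredholm alternative to pass from the right-inverse identity $(\mathbf{I}-\tilde R(x))(\mathbf{I}+R(x))=\mathbf{I}$ to two-sided invertibility is a useful clarification that the paper leaves implicit.
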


\begin{proof}
For $N \in \mathbb N$, define the index set 
$V^N := \{ v = (l,k,\eps) \in V \colon l \le N \}$
and the finite-rank operator $R^N(x)$:
\begin{equation} \label{RNal}
[R^N(x) \al]_{v_0} = \sum_{v \in V^N} R_{v_0,v}(x) \al_v.
\end{equation}
Using \eqref{estpsiR}--\eqref{RNal}, we show that
$$
\| R(x) - R^N(x) \|_{m \to m} = \sup_{v_0 \in V} \sum_{v \in (V\setminus V^N)} |R_{v_0,v}(x)| \le \sup_{l_0}\sum_{l \ge N} \frac{C\xi_l}{|l-l_0|+1} \to 0, \quad N \to \iy.
$$
Hence, the operator $R(x)$ is compact.

According to our notations, the relations \eqref{sumpsi} and \eqref{sumR} take the form \eqref{main} and 
$$
R(x) - \tilde R(x) = \tilde R(x) R(x),
$$
respectively. The latter relation implies \eqref{inv}, which completes the proof.
\end{proof}

Thus, we arrive at the following algorithm for solving Problem~\ref{prob:sd}.

\begin{alg} \label{alg:1}
Suppose that the spectral data $\{ \la_0, \mathcal N(\la_0) \}_{\la_0 \in \Lambda}$ of the problem $\mathcal L \in W$ are given. We have to find the Weyl solutions $\{ \Phi_k(x,\la) \}_{k = 1}^n$.

\begin{enumerate}
    \item Choose an arbitrary model problem $\tilde {\mathcal L} \in W$ with $\tilde p_{s,a} = p_{s,a}$, $s = \overline{1,n}$, $a = 0, 1$. In particular, one can take $\tilde F(x) = [\de_{k+1,j}]_{k,j = 1}^n$, $\tilde U_a = [\de_{j,p_{s,a}+1}]_{s,j = 1}^n$.
    \item For the problem $\tilde {\mathcal L}$, find the matrix function $\tilde \Phi(x, \la)$ and then $\tilde D(x, \mu,\la)$ by \eqref{defD}.
    \item Using $\tilde \Phi(x, \la)$, $\tilde D(x, \mu, \la)$, the spectral data $\{ \la_0, \mathcal N(\la_0)\}_{\la_0 \in \Lambda}$, $\{ \tilde \la_0, \tilde{\mathcal N}(\tilde \la_0)\}_{\tilde \la_0 \in \tilde \Lambda}$, and the notations \eqref{lalk}, find $\tilde \vv_{l,k,\eps}(x)$, $\tilde P_{l,k,\eps}(x,\la)$, and $\tilde G_{(l,k,\eps),(l_0,k_0,\eps_0)}$ for $(l,k,\eps), (l_0,k_0,\eps_0) \in V$ via \eqref{defphilk}, \eqref{defPlk}, and \eqref{defG}, respectively.
    \item Construct the infinite sequence $\tilde \psi(x)$ and the operator $\tilde R(x)$ by using \eqref{defpsi} and \eqref{defR} (with tilde), respectively.
    \item Find $\psi(x)$ by solving the main equation~\eqref{main}.
    \item Find $\{ \vv_{l,k,\eps}(x) \}_{(l,k,\eps) \in V}$ from \eqref{defpsi}:
    $$
        \begin{bmatrix}
            \vv_{l,k,0}(x) \\ \vv_{l,k,1}(x)
        \end{bmatrix} = w_{l,k}(x)
        \begin{bmatrix}
            \xi_l & 1 \\ 0 & 1
        \end{bmatrix}
        \begin{bmatrix}
            \psi_{l,k,0}(x) \\ \psi_{l,k,1}(x)
        \end{bmatrix}
    $$
    \item Construct $\phi(x,\la) = [\Phi_k(x, \la)]_{k = 1}^n$ by \eqref{findphi}.
\end{enumerate}
\end{alg}

\section{Reconstruction formulas} \label{sec:rec}

In this section, we use the solution $\psi(x)$ of the main equation \eqref{main} to obtain the solution of Problem~\ref{prob:sd-coef} for some classes of differential operators. We derive the reconstruction formulas in the form of series for the coefficients $\{ \tau_{\nu} \}_{\nu = 0}^{n-2}$ of the differential expression \eqref{defl}.

In Subsection~\ref{sec:gen}, the general approach to obtaining reconstruction formulas is described. However, for certain classes of the coefficients $\{ \tau_{\nu} \}_{\nu = 0}^{n-2}$, the convergence of the obtained series has to be studied in the corresponding spaces. Therefore, in Subsection~\ref{sec:series}, we prove an auxiliary lemma on the series convergence. In Subsections~\ref{sec:3}--\ref{sec:evenW}, we study the three classes of operators:

\medskip

(i) $n = 3$, $\tau_0 \in W_2^{-1}(0,1)$, $\tau_1 \in L_2(0,1)$;

\smallskip

(ii) $n$ is even, $\tau_{\nu} \in L_2(0,1)$, $\nu = \overline{0,n-2}$;

\smallskip

(iii) $n$ is even, $\tau_{\nu} \in W_2^{-1}(0,1)$, $\nu = \overline{0,n-2}$.

\medskip

For each case, we provide the uniqueness theorem of the inverse problem solution in an appropriate statement, obtain reconstruction formulas and prove the convergence of the series, and so get constructive algorithms for solving Problem~\ref{prob:sd-coef}. For the cases (ii) and (iii), we recover the coefficients $\tau_{n-2}$, $\tau_{n-3}$, \ldots, $\tau_1$, $\tau_0$ one-by-one in order to achieve the convergence estimates for the corresponding series. The even order in (ii) and (iii) is considered for definiteness. The similar ideas can be applied to the odd-order differential operators. For simplicity, in all the three cases, we choose such boundary conditions that their coefficients cannot be uniquely recovered from the spectral data and so do not consider their reconstruction. However, for other types of boundary conditions, the recovery of their coefficients also can be studied similarly to the regular case (see Lemma~2.3.7 in \cite{Yur02}).

Let us introduce some notations used throughout this section.
Note that the collection $\{ \la_{l,k,\eps} \}_{(l,k,\eps) \in V}$ may contain multiple eigenvalues for a fixed $\eps \in \{ 0, 1\}$: $\la_{l,k,\eps} = \la_{l_0,k_0,\eps}$, $(l,k) \ne (l_0,k_0)$. In order to exclude such values, we define the set
$$
V' := \{ (l,k,\eps) \in V \colon \not\exists (l_0,k_0,\eps) \in V \: \text{s.t.} \: (l_0,k_0) < (l,k) \: \text{and} \: \la_{l_0,k_0,\eps} = \la_{l,k,\eps} \}.
$$

In this section, we use the following notations for an index $v = (l,k,\eps) \in V'$:
\begin{gather} \label{defPv}
\la_v := \la_{l,k,\eps}, \quad \phi_v(x) := \phi_{\langle 0\rangle}(x, \la_v), \quad
\tilde P_v(x, \la) := (-1)^{\eps} \mathcal N_{\eps}(\la_v) \tilde D_{\langle 0 \rangle}(x, \la_v, \la), \\ \label{defcg}
c_v := (-1)^{\eps} \mathcal N_{\eps}(\la_v) J_0^{-1}, \quad \tilde g_v(x) := [\tilde \phi_{\langle 0\rangle}^{\star}(x,\la_v)]^T.
\end{gather}
Additionally, define the scalar functions
\begin{equation} \label{defeta}
\tilde \eta_{l,k,\eps}(x) := (-1)^{\eps} e_{k+1}^T \mathcal N_{\eps}(\la_{l,k,\eps})J_0^{-1} [\tilde \phi^{\star}_{\langle 0 \rangle}(x, \la_{l,k,\eps})]^T, \quad v \in V.
\end{equation}

\subsection{General approach} \label{sec:gen}

In terms of the notations \eqref{defPv}, the relation \eqref{contphi} can be rewritten as
$$
\phi(x, \la) = \tilde \phi(x, \la) + \sum_{v \in V'} \phi_v(x) \tilde P_v(x, \la). 
$$
Formal calculations show that
$$
\ell_n(\phi(x, \la)) = \ell_n(\tilde \phi(x, \la)) + \sum_{v \in V'} \ell_n(\phi_v(x) \tilde P_v(x, \la)).
$$
Recall that
$$
\ell_n(\phi(x, \la)) = \la \phi(x, \la), \quad 
\tilde \ell_n(\tilde \phi(x, \la)) = \la \tilde \phi(x, \la),
$$
and, by virtue of \eqref{lnphi},
$$
\ell_n(\phi_v(x)) = \la_v \phi_v(x) + \phi_v(x) \mathcal N_0(\la_v).
$$
Define $\hat \ell_n(y) := \ell_n(y) - \tilde \ell_n(y)$. Consequently,
\begin{multline} \label{sml4}
\la (\phi(x, \la) - \tilde \phi(x, \la)) - \sum_{v \in V'} \ell_n(\phi_v(x)) \tilde P_v(x, \la) = 
\sum_{v \in V'} \phi_v(x) [(\la - \la_v) I - \mathcal N_0(\la_v)] \tilde P_v(x, \la) \\ =
\hat \ell_n(\tilde \phi(x, \la)) + \sum_{v \in V'} \ell_n(\phi_v(x) \tilde P_v(x, \la)) - \sum_{v \in V'} \ell_n(\phi_v(x)) \tilde P_v(x, \la).
\end{multline}

Using \eqref{defPv} and \eqref{DN3}, we derive
\begin{align*}
[(\la - \la_v) - \mathcal N_0(\la_v)] \tilde P_v(x,\la) = & (-1)^{\eps} \mathcal N_{\eps}(\la_v) J_0^{-1} \langle [\tilde \phi_v^{\star}(x)]^T, \tilde \phi(x,\la)\rangle \\ & + (-1)^{\eps+1}[\mathcal N_{\eps}(\la_v) \mathcal N_1(\la_v) + \mathcal N_0(\la_v) \mathcal N_{\eps}(\la_v)] \tilde D_{\langle 0 \rangle}(x,\la_0,\la).
\end{align*}
The summation yields
\begin{equation} \label{sml2}
\sum_{v \in V'} \phi_v(x) [(\la - \la_v) I - \mathcal N_0(\la_v)] \tilde P_v(x, \la) = \sum_{v \in V'} \phi_v(x) c_v \langle \tilde g_v(x), \tilde \phi(x,\la)\rangle,
\end{equation}
where $c_v$ and $\tilde g_v(x)$ are defined by \eqref{defcg}.
Combining \eqref{sml4} and \eqref{sml2} together, we obtain
\begin{equation} \label{sml5}
\sum_{v \in V'} \phi_v(x) c_v \langle \tilde g_v(x), \tilde \phi(x,\la)\rangle =
\hat \ell_n(\tilde \phi(x, \la)) + \sum_{v \in V'} \ell_n(\phi_v(x) \tilde P_v(x, \la)) - \sum_{v \in V'} \ell_n(\phi_v(x)) \tilde P_v(x, \la).
\end{equation}

Suppose that the differential expression $y^{[n]} = \ell_n(y)$ has the form \eqref{defl}. Then, $\ell_n(y)$ can be formally represented as
\begin{equation} \label{deflp}
\ell_n(y) = y^{(n)} + \sum_{s = 0}^{n-2} p_s(x) y^{(s)},
\end{equation}
where
\begin{equation} \label{defps}
p_s = \sum_{k = \lceil s/2\rceil}^{\min \{s, \lfloor n/2\rfloor - 1\}} C_k^{s-k} [\tau_{2k}^{(2k-s)} + \tau_{2k+1}^{(2k-s+1)}] + \sum_{k = \lceil (s-1)/2\rceil}^{\min \{ s,\lfloor (n-1)/2 \rfloor\}-1} 2 C_k^{s-k-1} \tau_{2k+1}^{(2k+1-s)}.
\end{equation}
(We assume that $\tau_{n-1}(x) \equiv 0$). Suppose that $\tilde \ell_n(y)$ has the form similar to \eqref{deflp} with the coefficients $\tilde p_s(x)$, so
\begin{equation} \label{deflh}
\hat \ell_n(y) := \sum_{s = 0}^{n-2} \hat p_s(x) y^{(s)}, \quad \hat p_s := p_s - \tilde p_s.
\end{equation}

Using \eqref{deflp}, we derive
\begin{equation} \label{sml1}
\ell_n(\phi_v \tilde P_v) = \ell_n(\phi_v) \tilde P_v + \sum_{k = 1}^n C_n^k \sum_{v \in V'} \phi_v^{(n-k)} \tilde P_v^{(k)} + \sum_{k = 1}^{n-2} p_k \sum_{r = 1}^k C_k^r \sum_{v \in V'} \phi_v^{(k-r)} \tilde P_v^{(r)}.
\end{equation}
The relations \eqref{defPv} and \eqref{Ddx} imply
\begin{equation} \label{Pvdx}
\tilde P_v'(x, \la) = c_v \tilde g_v(x) \tilde \phi(x,\la).
\end{equation}
Substituting \eqref{Pvdx} into \eqref{sml1} and grouping the terms at $\tilde \phi^{(s)}(x, \la)$, we obtain
\begin{equation} \label{sml3}
\ell_n(\phi_v \tilde P_v) - \ell_n(\phi_v) \tilde P_v = \sum_{s = 0}^{n-1} t_{n,s} \tilde \phi^{(s)} + \sum_{s = 0}^{n-3} \sum_{k = s+1}^{n-2} p_k t_{k,s} \tilde \phi^{(s)},
\end{equation}
where
\begin{equation} \label{deftT}
t_{k,s}(x) := \sum_{r = s}^{k-1} C_k^{r+1} C_r^s T_{k-r-1,r-s}(x),
 \qquad
T_{j_1, j_2}(x) := \sum_{v \in V'} \phi_v^{(j_1)}(x) c_v \tilde g_v^{(j_2)}(x).
\end{equation}
Combining \eqref{sml5}, \eqref{deflh}, and \eqref{sml3} all together, we arrive at the relation
\begin{multline} \label{sml6}
\sum_{v \in V'} \phi_v(x) c_v \langle \tilde g_v(x), \tilde \phi(x, \la) \rangle = \sum_{s = 0}^{n-2} \hat p_s(x) \tilde \phi^{(s)}(x, \la) + \sum_{s = 0}^{n-1} t_{n,s}(x) \tilde \phi^{(s)}(x, \la) \\ + \sum_{s = 0}^{n-3} \sum_{k = s+1}^{n-2} p_k(x) t_{k,s}(x) \tilde \phi^{(s)}(x, \la)
\end{multline}
For definiteness, suppose that $\tilde p_s(x) = 0$, $s = \overline{0,n-2}$. Then $y^{[s]} = y^{(s)}$, $s = \overline{0, n}$, for the problem $\tilde{\mathcal L}$, and so
$$
\langle \tilde g_v(x), \tilde \phi(x, \la) \rangle = \sum_{s = 0}^{n-1} (-1)^{n-s-1} \tilde g_v^{(n-s-1)}(x) \tilde \phi^{(s)}(x, \la).
$$

Therefore, combining the terms at $\tilde \phi^{(s)}(x, \la)$, we obtain the formulas for finding the coefficients
\begin{equation} \label{findp}
    p_s = (-1)^{n-s-1} \sum_{v \in V'} \phi_v(x) c_v \tilde g_v^{(n-s-1)}(x) - t_{n,s}(x) - \sum_{k = s+1}^{n-2} p_k(x) t_{k,s}(x),  
\end{equation}
where $s = n-2,n-1,\ldots,1,0$. These formulas coincide with the ones for the regular case (see \cite[Lemma~2.3.7]{Yur02}).

Using the relations \eqref{findp} and \eqref{defps}, one can find $\tau_{\nu}$ for $\nu = n-2, n-3, \ldots, 1, 0$.
However, the formulas \eqref{findp} have been obtained by formal calculations. They can be used for reconstruction if the coefficients $\{ \tau_{\nu} \}_{\nu = 0}^{n-2}$ are so smooth that the series in \eqref{findp} and \eqref{deftT} converge. If the coefficients $\{ \tau_{\nu}\}_{\nu = 0}^{n-2}$ are non-smooth or even distributional, then the convergence of the series is a non-trivial question, which should be investigated separately for different classes of operators. For some classes, this question is considered in Subsections~\ref{sec:3}--\ref{sec:evenW}.

\subsection{Series convergence} \label{sec:series}

In this subsection, we prove the following auxiliary lemma.

\begin{lem} \label{lem:series}
Suppose that $j_1, j_2 \in \{ 0, 1, \ldots, n-1 \}$ and $\{ l^{(j_1 + j_2)} \xi_l \} \in l_2$. Then, there exist constants $\{ A_v \}_{v \in V'}$ such that the series
\begin{equation} \label{ser}
\sum_{v \in V'} (\phi_v^{[j_1]}(x) c_v \tilde g_v^{[j_2]}(x) - A_v)
\end{equation}
converges in $L_2(0,1)$. Moreover, if $\{ l^{(j_1 + j_2)} \xi_l\} \in l_1$, then the series
\begin{equation} \label{defT}
\sum_{v \in V'} \phi_v^{[j_1]}(x) c_v \tilde g_v^{[j_2]}(x)
\end{equation}
converges absolutely and uniformly on $[0,1]$.
\end{lem}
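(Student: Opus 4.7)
The plan is to reduce the series \eqref{defT} to one whose terms are controlled by $\xi_l$, by pairing the indices $(l,k,0)$ and $(l,k,1)$ in $V'$ and exploiting the inverse of the linear change of variables \eqref{defpsi}. From \eqref{defpsi} one reads off $\vv_{l,k,1}(x) = w_{l,k}(x)\psi_{l,k,1}(x)$ and $\vv_{l,k,0}(x) - \vv_{l,k,1}(x) = w_{l,k}(x)\xi_l \psi_{l,k,0}(x)$, where Theorem~\ref{thm:main} ensures that $\{\psi_v(x)\}_{v \in V}$ is bounded uniformly in $v$ and $x \in [0,1]$. A completely analogous decomposition of $\tilde g_v(x)$ comes from applying the same construction to the adjoint problems $\mathcal L^{\star}$ and $\tilde{\mathcal L}^{\star}$, with a dual weight $\tilde w^{\star}_{l,k}(x)$ in place of $w_{l,k}(x)$.

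The first technical step is to extend the estimates of Lemma~\ref{lem:est} to the $j$-th quasi-derivative, which picks up an additional factor $l^j$ on the right-hand side. This follows the scheme of \cite[Section 2.3.3]{Yur02} and uses the Birkhoff-type solutions of \cite{SS20} to handle the distribution-coefficient case. With these estimates, the paired contribution $S_{l,k}(x)$ --- the sum of the $\eps = 0$ and $\eps = 1$ summands in \eqref{defT}, after Corollary~\ref{cor:N} reduces $c_v$ to its single dominant entry proportional to $\pm \be_{l,k,\eps}$ --- can be written as a telescoping sum of three pieces, each containing exactly one of the differences $\vv_{l,k,0}^{[j_1]} - \vv_{l,k,1}^{[j_1]}$, $\be_{l,k} - \tilde \be_{l,k}$, or $\tilde g_{l,k,0}^{[j_2]} - \tilde g_{l,k,1}^{[j_2]}$. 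By the definition of $\xi_l$ in \eqref{defxi} and the asymptotics \eqref{asymptbe}, each piece is majorised pointwise by $C l^{j_1+j_2} \xi_l$ times an $x$-dependent envelope of the form $w_{l,k}(x)\tilde w^{\star}_{l,k}(x) l^{n-1+p_{k+1,0}-p_{k,0}}$, which remains bounded uniformly in $l$ and $x \in [0,1]$.

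For the $L_2$-statement, I would isolate the $x$-independent leading contribution of each pair --- obtained from the constant term in the Birkhoff asymptotic expansion of $\phi_v^{[j_1]}(x) c_v \tilde g_v^{[j_2]}(x)$ --- and declare these to be the constants $A_v$. The oscillatory remainders form a Bessel-type system with respect to the exponentials $\{\exp(-xl\cot(k\pi/n))\}_{l \ge 1}$ for each fixed $k$, so that
$$
\Bigl\| \sum_{l > N} \sum_{k = 1}^{n-1} \bigl( S_{l,k}(x) - A_{l,k,0} - A_{l,k,1}\bigr) \Bigr\|_{L_2(0,1)}^2 \le C \sum_{l > N} l^{2(j_1+j_2)} \xi_l^2 \to 0 \ \text{as} \ N \to \iy,
$$
by the hypothesis $\{l^{j_1+j_2}\xi_l\} \in l_2$. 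For the uniform absolute convergence in the second half of the lemma, the pointwise bound $|S_{l,k}(x)| \le C l^{j_1+j_2}\xi_l$ combined with $\{l^{j_1+j_2}\xi_l\} \in l_1$ yields the conclusion immediately, and no subtraction is required.

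The main obstacle is extending Lemma~\ref{lem:est} to higher quasi-derivatives while preserving the weight structure $w_{l,k}(x)$, since the quasi-derivatives are built from a possibly distributional matrix $F(x)$ and the standard smooth-coefficient arguments do not apply directly. A secondary difficulty is pinning down the constants $A_v$ precisely enough --- as the leading $x$-independent terms of the Birkhoff expansion --- that the remaining oscillatory residue admits a Bessel-type estimate uniformly in $k \in \{1,\ldots,n-1\}$.
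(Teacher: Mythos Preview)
Your overall architecture is sound --- pair $(l,k,0)$ with $(l,k,1)$, reduce $c_v$ via Corollary~\ref{cor:N} to the single entry $\be_{l,k,\eps}$, and subtract an $x$-independent constant coming from the leading Birkhoff term --- and the pointwise bound $|S_{l,k}(x)|\le C\,l^{j_1+j_2}\xi_l$ is correct and immediately yields the $l_1$ half of the lemma. The $l_2$ half, however, does not follow from what you wrote.

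The specific gap is your claim that the remainders form ``a Bessel-type system with respect to the exponentials $\{\exp(-x l\cot(k\pi/n))\}_{l\ge1}$''. These are real exponentials (and for even $n$, $k=n/2$, identically $1$), so there is no near-orthogonality to exploit: a pointwise bound of size $l^{j_1+j_2}\xi_l$ only gives $\sum_l\|S_{l,k}\|_{L_2}<\infty$ under an $l_1$ hypothesis, not $l_2$. What the paper actually does is expand $\Phi_{k+1}$ in the \emph{full} Birkhoff basis $\{y_s\}_{s=1}^n$ (Proposition~\ref{prop:Phi}), so that $\mathscr Z_{l,k}(x)$ becomes a double sum over $s_1,s_2$ of terms carrying $\exp\bigl(\rho_{l,k,\eps}(\omega_{s_1}-\omega_{s_2})x\bigr)$. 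The diagonal pieces $s_1=s_2\in\{k,k+1\}$ have no $x$-dependence in the exponent and produce the constants $A_{l,k}=\sum_{s=k,k+1}(\alpha_{l,k,s,s,0}-\alpha_{l,k,s,s,1})$; the diagonal pieces with $s_1=s_2\notin\{k,k+1\}$ decay exponentially in $l$ by the estimate \eqref{estb} on $b_{s,k+1}$; and the off-diagonal pieces $s_1\ne s_2$ carry genuine oscillation $e^{zlx}$ with $\mbox{Re}\,z\le0$, $z\ne0$, for which Proposition~\ref{prop:ser} turns the $l_2$-condition on the coefficients into $L_2$-convergence. The correction terms involving the Birkhoff remainders $\zeta_{j,s}(x,\rho)$ are handled separately by the $l_2$-property of $\Upsilon(\rho_{l,k,\eps})$ from Proposition~\ref{prop:y}. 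Without this case splitting in $(s_1,s_2)$ your argument cannot pass from $l_2$ to $L_2$.
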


Here and below, the quasi-derivatives for $\phi_v(x)$ are generated by the matrix $F(x)$ and for $\tilde g_v(x)$, by $\tilde F^{\star}(x)$. In order to prove Lemma~\ref{lem:series}, we need to formulate preliminary propositions.

Consider the sector $\Gamma_1 = \left\{ \rho \in \mathbb C \colon 0 < \arg \rho < \frac{\pi}{n} \right\}$. Denote by $\{ \omega_k \}_{k = 1}^n$ the roots  of the equation $\omega^n = 1$ numbered so that
\begin{equation*}
\mbox{Re} \, (\rho \om_1) < \mbox{Re} \, (\rho \om_2) < \dots < \mbox{Re} \, (\rho \om_n), \quad \rho \in \Gamma_1.
\end{equation*}
In addition, define the extended sector
\begin{equation*} 
\Gamma_{1, h} := \left\{ \rho \in \mathbb C \colon \rho + h \exp\bigl( \tfrac{\mathrm{i} \pi}{2 n}\bigr)  \in \Gamma_1 \right\}, \quad h > 0.
\end{equation*}

In the proof of Lemma~\ref{lem:series}, we need the following proposition on the Birkhoff-type solutions of equation \eqref{eqv} with certain asymptotic behavior as $|\rho| \to \iy$.

\begin{prop}[\cite{SS20}] \label{prop:y}
For some $\rho^* > 0$, equation \eqref{eqv} has a fundamental system of solutions $\{ y_k(x, \rho) \}_{k = 1}^n$ whose quasi-derivatives $y_k^{[j]}(x, \rho)$, $k = \overline{1,n}$, $j = \overline{0, n-1}$, are continuous for $x \in [0, 1]$, $\rho \in \overline{\Gamma}_{1,h}$, $|\rho| \ge \rho^*$, analytic in $\rho \in \Gamma_{1,h}$, $|\rho| > \rho^*$ for each fixed $x \in [0, 1]$, and satisfy the relation
$$
y_k^{[j]}(x, \rho) = (\rho \om_k)^j \exp(\rho \om_k x) (1 + \zeta_{jk}(x, \rho)),
$$
where
$$
\max_{j,k,x}|\zeta_{jk}(x, \rho)| \le C(\Upsilon(\rho) + |\rho|^{-1}), \quad \rho \in \overline{\Gamma}_{1,h}, \, |\rho| \ge \rho^*,
$$
and $\Upsilon(\rho)$ fulfills the condition $\{ \Upsilon(\rho_l) \} \in l_2$
for any non-condensing sequence $\{ \rho_l \} \subset \Gamma_{1,h}$. 
\end{prop}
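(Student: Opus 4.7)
The plan is to pass from the $n$th-order equation $y^{[n]}=\la y$ (with $\la=\rho^n$) to the first-order system \eqref{sys} and to construct the Birkhoff fundamental system by successive approximation of a Volterra integral equation tailored to the sector $\Gamma_{1,h}$. Writing $Y=\vec y$ and introducing the $\rho$-dependent diagonalizing change of variables
\begin{equation*}
Y(x,\rho)=\Theta(\rho)\Omega\, Z(x,\rho),\qquad \Theta(\rho)=\diag(1,\rho,\dots,\rho^{n-1}),\qquad \Omega=[\om_k^{j-1}]_{j,k=1}^n,
\end{equation*}
a direct computation gives $\Omega^{-1}\Theta^{-1}(\rho)\Lambda\Theta(\rho)\Omega=\rho D$ with $D=\diag(\om_1,\dots,\om_n)$, so the equation becomes $Z'=\rho D\, Z+B(x,\rho)Z$ with $B=\Omega^{-1}\Theta^{-1}(\rho)F(x)\Theta(\rho)\Omega$. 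The structure of $F\in\mathfrak F_n$ (in particular $f_{k,k+1}\equiv 1$, $f_{k,k}\in L_2(0,1)$ and $f_{k,j}\in L_1(0,1)$ for $k>j$) yields a splitting $B=B_0+B_1$, where $B_0$ collects the $L_2$ contributions from the diagonal $\{f_{k,k}\}$ and is uniformly bounded in $\rho$, while $B_1$ has $L_1$ entries whose norms are $O(|\rho|^{-1})$ uniformly on $\overline{\Gamma}_{1,h}$.

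Next, for each $k=\overline{1,n}$ I would seek $z_k(\cdot,\rho)$ as the fixed point of
\begin{equation*}
z_k(x,\rho)=e^{\rho\om_k x}e_k+\int_0^1 K_k(x,t,\rho)\,B(t,\rho)\,z_k(t,\rho)\,dt,
\end{equation*}
where the diagonal Green kernel $K_k=\diag(K_{k,1},\dots,K_{k,n})$ uses the strict ordering $\mbox{Re}(\rho\om_1)<\dots<\mbox{Re}(\rho\om_n)$ valid throughout $\overline{\Gamma}_{1,h}$: $K_{k,j}(x,t,\rho)=e^{\rho\om_j(x-t)}\mathbf{1}_{[0,x]}(t)$ for $j\le k$ and $K_{k,j}(x,t,\rho)=-e^{\rho\om_j(x-t)}\mathbf{1}_{[x,1]}(t)$ for $j>k$, so that $|e^{\rho\om_j(x-t)}|\le 1$ on the support of each kernel. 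Iterating, with the $O(|\rho|^{-1})$ factor from $B_1$ and the Cauchy--Schwarz bound applied to convolutions against $B_0$, gives convergence for $|\rho|\ge\rho^*$ and produces the expansion
\begin{equation*}
y_k^{[j]}(x,\rho)=(\rho\om_k)^j\exp(\rho\om_k x)\bigl(1+\zeta_{jk}(x,\rho)\bigr),
\end{equation*}
with the remainder controlled, up to a harmless $O(|\rho|^{-1})$, by
\begin{equation*}
\Upsilon(\rho):=\max_{1\le j,k\le n}\sup_{x\in[0,1]}\Bigl|\int_0^x \varphi_{j,k}(t)\,e^{\rho(\om_j-\om_k)(x-t)}\,dt\Bigr|,
\end{equation*}
where the $\varphi_{j,k}$ are linear combinations of the $L_2$ diagonal entries $\{f_{s,s}\}$. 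Continuity and analyticity of $y_k^{[j]}(x,\rho)$ in $\rho$ follow from the uniform convergence of the successive approximations and the fact that each approximant is entire in $\rho$.

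The final and most delicate point is to show that $\{\Upsilon(\rho_l)\}\in\ell_2$ for any non-condensing sequence $\{\rho_l\}\subset\Gamma_{1,h}$. For indices with $\om_j=\om_k$ the statement is trivial; for $j\ne k$ the defining integral is a complex-frequency Fourier-type transform of an $L_2$ function, and the required $\ell_2$-summability over $\{\rho_l\}$ follows from the Plancherel--P\'olya / Paley--Wiener theory in a horizontal strip, together with the non-condensing hypothesis that prevents the sampling frequencies from accumulating. I expect this last step to be the principal obstacle, since the Volterra construction itself is classical Birkhoff--Tamarkin machinery; the refinement of \cite{SS20} consists precisely in extracting an $\ell_2$-type rather than a mere $o(1)$-type remainder, and it is exactly this sharper bound on $\Upsilon(\rho)$ that makes the proposition usable for the $L_2$-convergence claim in Lemma~\ref{lem:series}.
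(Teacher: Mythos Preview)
The paper does not prove this proposition at all: it is quoted verbatim as a result of Savchuk and Shkalikov \cite{SS20} and used as a black box in the proof of Lemma~\ref{lem:series}. So there is no ``paper's own proof'' to compare against.

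That said, your outline is precisely the Birkhoff--Tamarkin scheme that \cite{SS20} carries out: the reduction $Y=\Theta(\rho)\Omega Z$ leading to $Z'=\rho D Z+B(x,\rho)Z$, the splitting of $B$ into an $L_2$ diagonal part and an $O(|\rho|^{-1})$ remainder coming from the sub-diagonal $L_1$ entries of $F$, the sector-adapted Volterra kernels $K_{k,j}$, and the identification of the remainder $\Upsilon(\rho)$ as a supremum of Fourier-type integrals of the $L_2$ entries $\{f_{s,s}\}$. You have also correctly located the only nontrivial step --- the $\ell_2$ sampling bound for $\{\Upsilon(\rho_l)\}$ along non-condensing sequences --- and the right circle of ideas (Paley--Wiener / Plancherel--P\'olya for functions of exponential type restricted to a strip). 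One small caveat: the strict ordering $\mbox{Re}(\rho\om_1)<\dots<\mbox{Re}(\rho\om_n)$ holds in $\Gamma_1$ but not throughout the enlarged region $\overline{\Gamma}_{1,h}$; you only get the non-strict inequalities there, which is still enough for the kernel bounds $|e^{\rho\om_j(x-t)}|\le 1$ on the relevant supports, but the wording should be adjusted.
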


Consider the strip $\mathcal S_0$ defined by \eqref{defSj}. Clearly, for a suitable choice of $h$ and $c$, we have $\mathcal S_0 \subset \Gamma_{1,h}$ and $\la_{l,k,\eps} = \rho_{l,k,\eps}^n$, $\rho_{l,k,\eps} \in \mathcal S_0$ for even $(n-k)$ and for sufficiently large $l$. Further in this section, we confine ourselves to considering even $(n-k)$, since the case of odd $(n-k)$ is similar.

\begin{prop} \label{prop:Phi}
Suppose that $k \in \{ 1, 2, \ldots, n-1 \}$ and $(n-k)$ is even. Then the Weyl solution can be expanded as
$$
\Phi_{k+1}(x, \la) = \sum_{s = 1}^n b_{s,k+1}(\rho) y_s(x, \rho), \quad \la = \rho^n, \quad \rho \in \mathcal S_0,
$$
where the coefficients $b_{s,k+1}(\rho)$ are analytic in $\rho \in \mathcal S_0$, $|\rho| \ge \rho^*$, and fulfill the estimate
\begin{equation} \label{estb}
b_{s,k+1}(\rho) = O\left(\rho^{-p_{k+1,0}} \stackrel{if \: s > k+1}{\times} \exp(\rho(\om_{k+1} - \om_s)) \right).
\end{equation}
\end{prop}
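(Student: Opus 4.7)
The plan is to expand the Weyl solution in the Birkhoff basis $\{y_s(x,\rho)\}_{s=1}^n$ and determine the coefficients $b_{s,k+1}(\rho)$ from the boundary conditions \eqref{bcPhi} via Cramer's rule. Since $\{y_s\}_{s=1}^n$ is a fundamental system of equation \eqref{eqv}, the expansion $\Phi_{k+1}(x,\la) = \sum_{s=1}^n b_{s,k+1}(\rho)\,y_s(x,\rho)$ holds uniquely for every $\rho \in \mathcal S_0$ with $\la = \rho^n$ not a pole of $\Phi_{k+1}$. Under the hypothesis $(n-k)$ even, the $\rho$-preimages of the poles of $\Phi_{k+1}$, i.e. the eigenvalues of $\mathcal L_{k+1}$ (for which $n-k-1$ is odd, so by \eqref{asymptla} their $\sqrt[n]{\cdot}$ lie in $\mathcal S_1$) fall in $\mathcal S_1$ rather than $\mathcal S_0$. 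Enlarging $\rho^*$ if necessary to exclude finitely many exceptional points, $b_{s,k+1}(\rho)$ is analytic throughout $\mathcal S_0 \cap \{|\rho|\ge\rho^*\}$.

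Applying the linear forms $\mathcal U_{j,a}$ to the expansion yields the linear system $M(\rho)\mathbf b(\rho) = e_{k+1}$, where the $(j,s)$-entry of $M(\rho)$ is $\mathcal U_{j,0}(y_s)$ for $j\le k+1$ and $\mathcal U_{j,1}(y_s)$ for $j\ge k+2$. Cramer's rule gives $b_{s,k+1}(\rho) = (-1)^{(k+1)+s}\det M^{(k+1,s)}(\rho)/\det M(\rho)$, where $M^{(k+1,s)}$ is the $(k+1,s)$-minor. From \eqref{defU} and Proposition~\ref{prop:y}, the entries have the uniform asymptotics
\begin{align*}
\mathcal U_{j,0}(y_s) &= (\rho\om_s)^{p_{j,0}}\bigl(1 + O(\Upsilon(\rho)+|\rho|^{-1})\bigr),\\
\mathcal U_{j,1}(y_s) &= (\rho\om_s)^{p_{j,1}}\exp(\rho\om_s)\bigl(1 + O(\Upsilon(\rho)+|\rho|^{-1})\bigr),
\end{align*}
for $\rho \in \overline{\mathcal S}_0$, $|\rho|\ge\rho^*$.

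Expanding both determinants by the Leibniz formula and invoking the ordering $\mbox{Re}(\rho\om_1) < \dots < \mbox{Re}(\rho\om_n)$ in $\mathcal S_0$, one singles out the dominant permutations by maximizing the total exponential factor. For $\det M(\rho)$ this forces $\sigma(\{k+2,\dots,n\}) = \{k+2,\dots,n\}$, producing a leading term $\rho^{P_0+P_1}\exp(\rho(\om_{k+2}+\dots+\om_n))\det A_0\cdot\det A_1$, with $P_0 = \sum_{j=1}^{k+1}p_{j,0}$, $P_1 = \sum_{j=k+2}^n p_{j,1}$, and $A_0 = [\om_s^{p_{j,0}}]_{j,s=1}^{k+1}$, $A_1 = [\om_s^{p_{j,1}}]_{j,s=k+2}^n$. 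For the minor with $s\le k+1$, the removed column lies in the upper block, the same block structure governs the leading term, and the exponentials cancel in the ratio, giving $b_{s,k+1}(\rho) = O(\rho^{-p_{k+1,0}})$. For $s>k+1$, the removed column lies in the lower block, forcing one lower-block row to be paired with an upper-block column; the optimal choice (largest $\mbox{Re}(\rho\om_{s'})$ among $s'\le k+1$) is $s' = k+1$, yielding a leading factor $\rho^{P_0+P_1-p_{k+1,0}}\exp(\rho(\om_{k+1}+\om_{k+2}+\dots+\om_n-\om_s))$ in the minor, whence $b_{s,k+1}(\rho) = O(\rho^{-p_{k+1,0}}\exp(\rho(\om_{k+1}-\om_s)))$. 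The two cases combine into \eqref{estb}.

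The main obstacle is to make this leading-order analysis rigorous and uniform. Three points require care: (i) the non-vanishing of the generalized Vandermonde-type determinants $\det A_0$ and $\det A_1$, which follows from a sparse-polynomial argument using that the $p_{j,a}$ are distinct elements of $\{0,\dots,n-1\}$ and the $\om_s$ are distinct $n$-th roots of unity; (ii) the contributions of subdominant permutations are controlled by a factor $\exp(-c|\rho|)$ with $c>0$ uniform on $\mathcal S_0$, which is ensured by the strict separation of the $\mbox{Re}(\rho\om_s)$ within the appropriate extension of the sector; and (iii) the multiplicative errors $O(\Upsilon(\rho)+|\rho|^{-1})$ inherited from Proposition~\ref{prop:y} are absorbed into the leading term for $\rho^*$ sufficiently large. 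Once these technical steps are carried out, the bound \eqref{estb} follows uniformly on $\mathcal S_0\cap\{|\rho|\ge\rho^*\}$.
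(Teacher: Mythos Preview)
Your approach is correct and is precisely the one the paper invokes by citing Lemma~3 of \cite{Bond21}: expand $\Phi_{k+1}$ in the Birkhoff system, impose the $n$ conditions from \eqref{bcPhi}, solve by Cramer's rule, and isolate the dominant permutation in numerator and denominator using the ordering of $\mbox{Re}(\rho\om_s)$. The paper itself gives no details beyond the citation, so you have effectively reconstructed the referenced argument.

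One caution on your point~(i): the ``sparse-polynomial argument'' you sketch does not establish $\det A_0,\det A_1\ne 0$ for arbitrary distinct exponents $p_{j,a}\in\{0,\dots,n-1\}$ and arbitrary subsets of the $n$-th roots of unity. Minors of the DFT matrix can vanish when $n$ is composite (Chebotarev's theorem guarantees non-vanishing only for prime $n$); a polynomial with $k+1$ terms can certainly vanish at $k+1$ roots of unity. In the concrete applications of this paper (Subsections~\ref{sec:3}--\ref{sec:evenW}) one has $p_{s,0}=s-1$ and $p_{s,1}=n-s$, so $A_0,A_1$ are ordinary Vandermonde matrices in distinct $\om_s$ and the non-vanishing is immediate. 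For general $\{p_{s,a}\}$ the non-degeneracy comes from the explicit leading-term formulas for $\Delta_{k+1,k+1}(\la)$ worked out in \cite{Bond21,Bond22-asympt}, which also underlie the asymptotics \eqref{asymptla}; it is not a generic algebraic fact, and you should either restrict to the Vandermonde case or cite that computation.

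A minor remark on (ii): in the strip $\mathcal S_0$ the strict inequalities $\mbox{Re}(\rho\om_s)<\mbox{Re}(\rho\om_{s+1})$ can degenerate to equalities on the real axis, so ``strict separation'' is too strong. This is harmless for the upper bound \eqref{estb}: the non-optimal permutations are bounded \emph{above} by the dominant one (possibly with equality), and any ties contribute only to the implicit constant. What you actually need is a \emph{lower} bound on $|\det M(\rho)|$, and that comes from the non-vanishing in (i) together with the fact that, away from a $\delta$-neighbourhood of the eigenvalues in $\mathcal S_0$, the leading term is not cancelled.
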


\begin{proof}
The properties of the coefficient $b_{s,k+1}(\rho)$ follow from the certain formulas for these coefficients obtained in the proof of Lemma~3 in \cite{Bond21}.
\end{proof}

\begin{prop} \label{prop:ser}
Let $z$ be a non-zero complex with $\mbox{Re}\, z \le 0$, and let $\{ \varkappa_l \}_{l\ge 1} \in l_2$. Then the series $\sum\limits_{l\ge 1} \varkappa_l \exp(zlx)$ converges in $L_2(0,1)$.
\end{prop}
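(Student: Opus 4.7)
The plan is to prove the stronger statement that the linear map $\{\varkappa_l\} \mapsto \sum_{l \ge 1} \varkappa_l e^{zlx}$ is bounded from $l_2$ into $L_2(0,1)$, i.e., to establish a Bessel-type inequality
\[
\Bigl\| \sum_{l=M}^{N} \varkappa_l e^{zlx} \Bigr\|_{L_2(0,1)}^2 \le C(z) \sum_{l=M}^N |\varkappa_l|^2,
\]
with $C(z)$ independent of $M, N$. Since $\{\varkappa_l\} \in l_2$, the right-hand side tends to $0$ as $M, N \to \infty$, so the partial sums form a Cauchy sequence in $L_2(0,1)$, and convergence follows by completeness. Thus the work reduces to proving this bound for finite sums, which I would do by expanding
\[
\Bigl\| \sum \varkappa_l e^{zlx} \Bigr\|_{L_2(0,1)}^2 = \sum_{l,k} \varkappa_l \bar{\varkappa}_k I_{lk}, \qquad I_{lk} := \int_0^1 e^{(zl + \bar z k)x}\, dx,
\]
and controlling the kernel $I_{lk}$ in two separate regimes.

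For the first regime $\operatorname{Re} z = \alpha < 0$, one has $w_{lk} := zl + \bar z k$ with $\operatorname{Re} w_{lk} = \alpha(l+k) < 0$, hence $|e^{w_{lk}} - 1| \le 2$ and $|w_{lk}| \ge |\alpha|(l+k)$. Therefore $|I_{lk}| \le 2/(|\alpha|(l+k))$, and the classical Hilbert inequality
\[
\sum_{l,k \ge 1} \frac{|\varkappa_l||\varkappa_k|}{l+k} \le \pi \sum_l |\varkappa_l|^2
\]
gives the Bessel bound with $C(z) = 2\pi/|\alpha|$. This regime is routine.

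The second regime $\operatorname{Re} z = 0$, $z = i\beta$ with $\beta \neq 0$, is the main difficulty: one cannot estimate $|I_{lk}|$ by $1/|l-k|$ and then apply absolute Schur-type bounds, because the matrix $(1/|l-k|)_{l \neq k}$ is \emph{not} bounded on $l_2$ and the sign cancellation in $(e^{i\beta(l-k)}-1)/(i\beta(l-k))$ is essential. I would handle this by substituting $y = \beta x/(2\pi)$ (WLOG $\beta > 0$, otherwise pass to the complex conjugate) to obtain
\[
\Bigl\| \sum \varkappa_l e^{i\beta l x}\Bigr\|_{L_2(0,1)}^2 = \frac{2\pi}{\beta} \int_0^{\beta/(2\pi)} \Bigl| \sum \varkappa_l e^{2\pi \mathrm{i} l y}\Bigr|^2 dy.
\]
Since the integrand is $1$-periodic in $y$ and $\{e^{2\pi \mathrm{i} l y}\}_{l \in \mathbb Z}$ is orthonormal on any unit interval by Parseval, the integral over an interval of length $L$ is bounded by $\lceil L \rceil \sum |\varkappa_l|^2$. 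This gives the Bessel bound
\[
\Bigl\| \sum \varkappa_l e^{i\beta l x}\Bigr\|_{L_2(0,1)}^2 \le \frac{2\pi}{\beta}\bigl\lceil \tfrac{\beta}{2\pi}\bigr\rceil \sum |\varkappa_l|^2,
\]
completing the verification in the remaining regime. Combining the two regimes yields the Bessel inequality for all admissible $z$, and the Cauchy argument sketched above finishes the proof.
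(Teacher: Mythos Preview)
Your argument is correct. Note, however, that the paper does not actually supply a proof of this proposition: it is stated without proof or citation, presumably as an elementary fact, and the proof environment that follows it is for Lemma~\ref{lem:series}. So there is no ``paper's own proof'' to compare against. Your route---establishing a uniform Bessel-type bound for finite sums and then invoking completeness of $L_2(0,1)$---is a standard and clean way to fill in the details. The split into the two regimes is natural: for $\mbox{Re}\,z<0$ the kernel estimate $|I_{lk}|\le 2/(|\alpha|(l+k))$ together with Hilbert's inequality is exactly right, and for $\mbox{Re}\,z=0$ the rescaling to the Fourier system $\{e^{2\pi \mathrm{i} l y}\}$ and the periodicity/Parseval argument correctly captures the necessary cancellation that a crude $1/|l-k|$ bound would miss.
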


\begin{proof}[Proof of Lemma~\ref{lem:series}]
Let $j_1, j_2 \in \{ 0,1,\ldots,n-1\}$ be fixed.
In order to prove the convergence of the series \eqref{ser},\eqref{defT}, it is sufficient to consider their terms for $v = (l,k,\eps)$ with sufficiently large $l$. 
For technical simplicity, let us assume that $\la_{l_1,k_1,\eps} \ne \la_{l_2,k_2,\eps}$ for any sufficiently large $l_1, l_2$ such that $l_1 \ne l_2$.
In view of Corollary~\ref{cor:N}, we have
\begin{gather} \label{smser}
\sum_{v: \: l \: \text{is fixed}} \phi_v^{[j_1]}(x) c_v \tilde g_v^{[j_2]}(x) = \sum_{k = 1}^{n-1} (-1)^{n-1-p_{k,0}} \mathscr Z_{l,k}(x), \\ \nonumber \mathscr Z_{l,k}(x) := \sum_{\eps = 0,1} (-1)^{\eps} \beta_{l,k,\eps}\vv_{l,k,\eps}^{[j_1]}(x) \tilde \vv^{\star [j_2]}_{l,n-k,\eps}(x),
\end{gather}
where
\begin{gather*}
\vv_{l,k,\eps}^{[j_1]}(x) = \Phi^{[j_1]}_{k+1}(x, \la_{l,k,\eps}), \quad 
\tilde \vv_{l,n-k,\eps}^{\star [j_2]}(x) = \tilde\Phi^{\star [j_2]}_{n-k+1}(x, \la_{l,k,\eps}),
\quad \beta_{l,k,0} := \beta_{l,k}, \quad \beta_{l,k,1} := \tilde \beta_{l,k}, 
\end{gather*}

Fix $k \in \{ 1, 2, \ldots, n-1\}$ such that $(n-k)$ is even. Then, by Proposition~\ref{prop:Phi}, we have
\begin{align} \nonumber
\Phi_{k+1}^{[j_1]}(x, \la_{l,k,\eps}) & = \sum_{s_1 = 1}^n b_{s_1,k+1}(\rho_{l,k,\eps}) y_{s_1}^{[j_1]}(x, \rho_{l,k,\eps}), \\ \label{defal}
\tilde\Phi^{\star [j_2]}_{n-k+1}(x, \la_{l,k,\eps}) & = \sum_{s_2 = 1}^n \tilde b_{n-s_2+1,n-k+1}^{\star}(\rho_{l,k,\eps}) \tilde y_{n-s_2+1}^{\star [j_2]}(x, \rho_{l,k,\eps}).
\end{align}
Using the above relations and Proposition~\ref{prop:y}, we obtain
\begin{align} \nonumber
\mathscr Z_{l,k}(x) & = \sum_{s_1 = 1}^n \sum_{s_2 = 1}^n Z_{l,k,s_1,s_2}(x), \\ \nonumber
Z_{l,k,s_1,s_2}(x) & = \sum_{\eps =0,1} \al_{l,k,s_1,s_2,\eps} \exp(\rho_{l,k,\eps}(\om_{s_1} - \om_{s_2})x)(1 + \zeta_{s_1,j_1}(x,\rho_{l,k,\eps})) (1 + \tilde \zeta^{\star}_{n-s_2+1,j_2}(x, \rho_{l,k,\eps})), \\ \label{defalpha}
\al_{l,k,s_1,s_2,\eps} & := \beta_{l,k,\eps} b_{s_1,k+1}(\rho_{l,k,\eps}) b_{n-s_2+1,n-k+1}^{\star}(\rho_{l,k,\eps}) (\om_{s_1})^{j_1} (-\om_{s_2})^{j_2}
\rho_{l,k,\eps}^{j_1 + j_2}.
\end{align}
Consider the sums
\begin{align*}
Z_{l,k,s_1,s_2}(x) & = Z^1_{l,k,s_1,s_2}(x) +  Z^2_{l,k,s_1,s_2}(x) + Z^3_{l,k,s_1,s_2}(x) + Z^4_{l,k,s_1,s_2}(x), \\
Z^1_{l,k,s_1,s_2}(x) & := \sum_{\eps = 0,1} \al_{l,k,s_1,s_2,\eps} \exp(\rho_{l,k,\eps}(\om_{s_1} - \om_{s_2})x), \\
Z^2_{l,k,s_1,s_2}(x) & := \sum_{\eps = 0,1} \al_{l,k,s_1,s_2,\eps} \exp(\rho_{l,k,\eps}(\om_{s_1} - \om_{s_2})x) \zeta_{s_1,j_1}(x, \rho_{l,k,\eps}), \\
Z^3_{l,k,s_1,s_2}(x) & := \sum_{\eps = 0,1} \al_{l,k,s_1,s_2,\eps} \exp(\rho_{l,k,\eps}(\om_{s_1} - \om_{s_2})x) \tilde \zeta^{\star}_{n-s_2 + 1,j_2}(x, \rho_{l,k,\eps}), \\
Z^4_{l,k,s_1,s_2}(x) & := \sum_{\eps = 0,1} \al_{l,k,s_1,s_2,\eps} \exp(\rho_{l,k,\eps}(\om_{s_1} - \om_{s_2})x) \zeta_{s_1,j_1}(x, \rho_{l,k,\eps}) \tilde \zeta^{\star}_{n-s_2 + 1,j_2}(x, \rho_{l,k,\eps}).
\end{align*}
Thus, it is sufficient to study the convergence of the series $\sum\limits_{l \ge l_0} Z^{\nu}_{l,k,s_1,s_2}(x)$ for fixed $k$, $s_1$, $s_2$, and $\nu = \overline{1,4}$.

The asymptotics \eqref{asymptla} and \eqref{asymptbe} imply
\begin{equation} \label{smest1}
|\rho_{l,k,\eps}| \le C l, \quad |\be_{l,k,\eps}| \le C l^{n-1+p_{k+1,0}-p_{k,0}}.
\end{equation}
Using \eqref{defalpha} together with the estimates \eqref{smest1}
and \eqref{estb}, we obtain
$$
|\al_{l,k,s_1,s_2,\eps}| \le C l^{j_1 + j_2} \stackrel{if \: s_1 > k+1}{\times} \exp(\mbox{Re}\,(\om_{k+1} - \om_{s_1})r_k l) \stackrel{if \: s_2 < k}{\times} \exp(\mbox{Re}\,(\om_{s_2} - \om_k) r_k l),
$$
where $r_k := \frac{\pi}{\sin \tfrac{\pi k}{n}}$.
The relation \eqref{relxi} yields
$$
|\rho_{l,k,0} - \rho_{l,k,1}| \le C \xi_l, \quad |\be_{l,k,0} - \be_{l,k,1}| \le C \xi_l l^{n-1+p_{k+1,0}-p_{k,0}}.
$$
Since the functions $b_{s,k+1}(\rho)$ are analytic and satisfy \eqref{estb}, we obtain
$$
|b_{s,k+1}(\rho_{l,k,0}) - b_{s,k+1}(\rho_{l,k,1})| \le C \xi_l l^{-p_{k+1,0}} \stackrel{if \: s > k+1}{\times} \exp(\mbox{Re}\,(\om_s - \om_k) r_k l).
$$
It follows from \eqref{defalpha} that
\begin{align*}
\al_{l,k,s_1,s_2,0} & - \al_{l,k,s_1,s_2,1} = (\beta_{l,k,0} - \beta_{l,k,1}) b_{s_1,k+1}(\rho_{l,k,0}) b_{n-s_2+1,n-k+1}^{\star}(\rho_{l,k,0}) (\om_{s_1})^{j_1} (-\om_{s_2})^{j_2}
\rho_{l,k,0}^{j_1 + j_2} \\ & + \beta_{l,k,1} (b_{s_1,k+1}(\rho_{l,k,0}) - b_{s_1,k+1}(\rho_{l,k,1})) b_{n-s_2+1,n-k+1}^{\star}(\rho_{l,k,0}) (\om_{s_1})^{j_1} (-\om_{s_2})^{j_2}
\rho_{l,k,0}^{j_1 + j_2} \\ & + \beta_{l,k,1} b_{s_1,k+1}(\rho_{l,k,1}) (b_{n-s_2+1,n-k+1}^{\star}(\rho_{l,k,0}) - b_{n-s_2+1,n-k+1}^{\star}(\rho_{l,k,1})) (\om_{s_1})^{j_1} (-\om_{s_2})^{j_2}
\rho_{l,k,0}^{j_1 + j_2} \\ & + \beta_{l,k,1} b_{s_1,k+1}(\rho_{l,k,1}) b_{n-s_2+1,n-k+1}^{\star}(\rho_{l,k,1}) (\om_{s_1})^{j_1} (-\om_{s_2})^{j_2} (\rho_{l,k,0}^{j_1 + j_2} - \rho_{l,k,1}^{j_1 + j_2}).
\end{align*}
Consequently, we estimate
\begin{gather*}
|\al_{l,k,s_1,s_2,0} - \al_{l,k,s_1,s_2,1}| \le C l^{j_1 + j_2} \xi_l \stackrel{if \: s_1 > k+1}{\times} \exp(\mbox{Re}\,(\om_{k+1} - \om_{s_1})r_k l) \\ \stackrel{if \: s_2 < k}{\times} \exp(\mbox{Re}\,(\om_{s_2} - \om_k) r_k l).
\end{gather*}

Suppose that $\{ l^{j_1 + j_2} \xi_l \} \in l_2$. Consider the cases:
\begin{enumerate}
\item If $s_1 = s_2 \not\in \{ k, k+1 \}$, then the terms of the series $\sum\limits_{l \ge l_0} Z^1_{l,k,s_1,s_2}(x)$ decay exponentially, so the series converges absolutely.
\item If $s_1 = s_2 \in \{ k, k+1 \}$, then the series $\sum\limits_{l \ge l_0} (\al_{l,k,s_1,s_2,0} - \al_{l,k,s_1,s_2,1})$ not necessarily converges.
\item If $s_1 \ne s_2$, then
\begin{align*}
Z^1_{l,k,s_1,s_2}(x) = & ((\al_{l,k,s_1,s_2,0} - \al_{l,k,s_1,s_2,1}) \\ & + \al_{l,k,s_1,s_2,1} [(\rho_{l,k,0} - \rho_{l,k,1})(\om_{s_1} - \om_{s_2}) x + O(\xi_l^2)])\exp(\rho_{l,k,0}(\om_{s_1} - \om_{s_2})x).  
\end{align*}
Consequently, the series $\sum\limits_{l \ge l_0} Z^1_{l,k,s_1,s_2}(x)$ converges in $L_2(0,1)$ by virtue of Proposition~\ref{prop:ser}.
\end{enumerate}
Using Proposition~\ref{prop:y}, we show that
\begin{align*}
& |\zeta_{s_1,j_1}(x, \rho_{l,k,\eps})| \le C (\Upsilon(\rho_{l,k,\eps}) + l^{-1}), \\ & |\zeta_{s_1,j_1}(x, \rho_{l,k,0}) - \zeta_{s_1,j_1}(x, \rho_{l,k,1})| \le C \xi_l (\Upsilon(\rho_{l,k,0}^*) + l^{-1}),
\end{align*}
where $\Upsilon(\rho_{l,k,0}^*) = \max\limits_{|\rho - \rho_{l,k,0}|\le \de} \Upsilon(\rho)$. Note that $\{ \Upsilon(\rho^*_{l,k,0}) \} \in l_2$. Consequently, the series $\sum\limits_{l \ge l_0} Z^2_{l,k,s_1,s_2}(x)$ converges absolutely and uniformly on $[0,1]$. The proof for $Z^3$ and $Z^4$ is analogous. Thus, the regularized series $\sum\limits_{l \ge l_0} (\mathscr Z_{l,k}(x) - A_{l,k})$ converges in $L_2(0,1)$ with the constants
$$
A_{l,k} = \sum_{s = k,k+1} (\al_{l,k,s,s,0} - \al_{l,k,s,s,1}).
$$

Using the arguments above, we obtain the estimate
$$
|\mathscr Z_{l,k}(x)| \le C l^{j_1 + j_2} \xi_l.
$$
Hence, in the case $\{ l^{j_1 +j_2} \xi_l \} \in l_1$, the series $\sum\limits_{l \ge l_0} \mathscr Z_{l,k}(x)$ converges absolutely and uniformly with respect to $x \in [0,1]$. Taking \eqref{smser} into account, we arrive at the assertion of the lemma.
\end{proof}

\subsection{Case $n = 3$.} \label{sec:3}

Consider the differential expression 
$$
\ell_3(y) = y^{(3)} + (\tau_1(x) y)' + \tau_1(x) y' + \tau_0(x) y, \quad x \in (0,1),
$$
where $\tau_1 \in L_2(0,1)$ and $\tau_0 \in W_2^{-1}(0,1)$, that is, $\tau_0 = \sigma_0'$, $\sigma_0 \in L_2(0,1)$. The associated matrix has the form (see, e.g., \cite{MS19}):
\begin{equation} \label{F3}
F(x) = \begin{bmatrix}
            0 & 1 & 0 \\
            -(\sigma_0 + \tau_1) & 0 & 1 \\
            0 & (\sigma_0 - \tau_1) & 0
        \end{bmatrix},
\end{equation}
so $y^{[1]} = y'$, $y^{[2]} = y'' + (\sigma_0 + \tau_1) y$, $y^{[3]} = \ell_3(y)$.

Suppose that $p_{s,0} = s-1$, $p_{s,1} = 3-s$, $s = \overline{1,3}$, in the linear forms \eqref{defU}. Using the technique of \cite{Bond22-asympt}, we obtain the eigenvalue asymptotics
\begin{equation} \label{asymptla3}
\la_{l,k} = (-1)^{k+1}\left( \frac{2 \pi}{\sqrt 3} \Bigl( l + \frac{1}{6} + \frac{(-1)^k}{\pi^2 l} \int_0^1 \tau_1(t) \, dt + \frac{\varkappa_{l,k}}{l}\Bigr)\right)^3, \quad \{ \varkappa_{l,k} \} \in l_2, \quad l \ge 1, \: k = 1,2.
\end{equation}

Assume that $\mathcal L \in W$. It can be easily shown that, if $\Lambda_1 \cap \Lambda_2 = \varnothing$, then the spectral data $\{ \la_0, \mathcal N(\la_0) \}_{\la_0 \in \Lambda}$ do not depend on the boundary condition coefficients $u_{s,j,a}$. Therefore, let us assume that $U_0 = I$, $U_1 = [\de_{k,4-j}]_{k,j = 1}^3$. Consider the following inverse problem.

Consider the problems $\mathcal L = (F(x), U_0, U_1) \in W$ and $\tilde {\mathcal L} = (\tilde F(x), U_0, U_1) \in W$, where $\tilde F(x)$ is the matrix function associated with the differential expression $\tilde \ell_3(y)$ having the coefficients $\tilde \tau_1 \in L_2(0,1)$ and $\tilde \tau_0 = \tilde \sigma_0' \in W_2^{-1}(0,1)$. 
Under the above assumptions, the following uniqueness theorem for solution of Problem~\ref{prob:sd-coef} is valid.

\begin{thm} \label{thm:uniq3}
If $\Lambda = \tilde \Lambda$ and $\mathcal N(\la_0) = \tilde{\mathcal N}(\la_0)$ for all $\la_0 \in \Lambda$, then $\tau_1(x) = \tilde \tau_1(x)$ and
$\sigma_0(x) = \tilde \sigma_0(x) + const$ a.e. on $(0,1)$. Thus, the spectral data $\{ \la_0, \mathcal N(\la_0) \}_{\la_0 \in \Lambda}$ uniquely specify $\tau_1 \in L_2(0,1)$ and $\tau_0 \in W_2^{-1}(0,1)$.
\end{thm}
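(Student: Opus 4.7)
The plan is to carry out the two-step decomposition $\{\la_0,\mathcal N(\la_0)\}\to\{\Phi_k\}\to\{\tau_\nu\}$ highlighted in the introduction. The first step will be immediate from the contour-integration identity \eqref{contphi}, and the second step from a direct comparison of the two differential equations satisfied by the same Weyl solutions, using that three Weyl solutions at a non-singular $\la$ never vanish simultaneously.

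For the first step, observe that the hypothesis $\Lambda=\tilde\Lambda$ and $\mathcal N(\la_0)=\tilde{\mathcal N}(\la_0)$ for every $\la_0\in\Lambda$ makes $\mathcal I=\Lambda$ and $\hat{\mathcal N}(\la_0):=\mathcal N(\la_0)-\tilde{\mathcal N}(\la_0)\equiv 0$ on $\mathcal I$. Then the series on the right-hand side of \eqref{contphi} vanishes term-by-term, yielding
$$
\Phi_k(x,\la)=\tilde\Phi_k(x,\la),\qquad k=1,2,3,\ x\in[0,1],\ \la\in\mathbb C\setminus\Lambda.
$$
Consequently the function $\Phi_k$ belongs simultaneously to $\mathcal D_F$ and $\mathcal D_{\tilde F}$. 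The quasi-derivatives of orders $0$ and $1$ prescribed by \eqref{quasi} coincide with the classical derivatives of $\Phi_k$, so the two calculi first disagree at order $2$: with $F$ given by \eqref{F3} and $\tilde F$ its analogue, $\Phi_k^{[2]}=\Phi_k''+(\sigma_0+\tau_1)\Phi_k$ and the analogous expression for $\tilde F$ are both absolutely continuous, and their difference equals $u\,\Phi_k$, where
$$
u:=(\sigma_0+\tau_1)-(\tilde\sigma_0+\tilde\tau_1)\in L_2(0,1).
$$
In particular $u\Phi_k\in AC[0,1]$.

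Next I upgrade $u$ from $L_2$ to $AC$. The third-order classical Wronskian $W(\Phi_1,\Phi_2,\Phi_3)$ equals $\det\Phi(\cdot,\la)=\det U_0^{-1}$, a non-zero constant; in particular at every $x_0\in(0,1)$ there exists $k$ with $\Phi_k(x_0)\ne 0$. In a neighbourhood of such $x_0$ the AC function $u\Phi_k$ divided by the AC non-vanishing factor $\Phi_k$ is again AC, so $u\in AC_{\mathrm{loc}}(0,1)$ and hence $u\in AC[0,1]$. Equating $\Phi_k^{[3]}=\la\Phi_k$ in the two calculi and unfolding \eqref{quasi} with $f_{3,2}=\sigma_0-\tau_1$ then produces the pointwise identity
$$
u'(x)\,\Phi_k(x)+2p(x)\,\Phi_k'(x)=0\text{ a.e.\ on } (0,1),\qquad p:=\tau_1-\tilde\tau_1\in L_2(0,1),\ k=1,2,3.
$$

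At almost every $x\in(0,1)$ the values $\Phi_k''(x)$ are finite, and the expansion $W(x)=\Phi_1''(x)W_{23}(x)-\Phi_2''(x)W_{13}(x)+\Phi_3''(x)W_{12}(x)$ of the Wronskian along its third row implies that the pairwise Wronskians $W_{ij}:=\Phi_i\Phi_j'-\Phi_j\Phi_i'$ cannot vanish simultaneously at such $x$, for otherwise $W(x)=0$ contradicting the constancy and non-vanishing of $W$. Consequently the $3\times 2$ matrix with rows $(\Phi_k(x),\Phi_k'(x))$ has rank two almost everywhere, and the linear system above forces $u'(x)=p(x)=0$ a.e. This gives $\tau_1=\tilde\tau_1$ and $\sigma_0-\tilde\sigma_0\equiv\mathrm{const}$, which is the claim. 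The main technical obstacle is the intermediate regularity upgrade from $u\in L_2$ to $u\in AC$; once it is in hand, both the pointwise identity and its linear-algebraic resolution are routine.
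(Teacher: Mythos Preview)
Your proof is correct, but the route differs from the paper's. The paper works with the matrix of spectral mappings $\mathcal P(x,\la)=\Phi(x,\la)[\tilde\Phi(x,\la)]^{-1}$: Lemma~\ref{lem:Pconst} shows (by computing the Laurent coefficients at each $\la_0\in\Lambda$ and invoking Liouville via \eqref{asymptP}) that $\mathcal P(x,\la)\equiv\mathcal P(x)$ is a unit lower-triangular matrix independent of $\la$; the matrix ODE $\mathcal P'+\mathcal P\tilde F=F\mathcal P$ is then unpacked entrywise using the explicit form \eqref{F3} to read off $\hat\tau_1=0$ and $\hat\sigma_0=const$ directly. You instead use the contour-integration identity \eqref{contphi} to obtain $\Phi_k=\tilde\Phi_k$ as scalar functions, then bootstrap the regularity of $u=(\hat\sigma_0+\hat\tau_1)$ from $L_2$ to $AC$ via the non-vanishing Wronskian, derive the scalar relation $u'\Phi_k+2\hat\tau_1\Phi_k'=0$, and close with a rank-two argument on the $3\times 2$ matrix $[\Phi_k,\Phi_k']_{k=1}^3$. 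The paper's approach is cleaner to generalise to arbitrary $n$ (as in Subsections~\ref{sec:evenL}--\ref{sec:evenW}), since the matrix ODE handles all entries uniformly and avoids the regularity upgrade; your approach is more hands-on and makes explicit why exactly three independent solutions suffice to separate the two unknowns $u'$ and $\hat\tau_1$, but the Wronskian-rank step and the $AC$ bootstrap would need reworking for higher $n$ or more singular coefficients.
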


In order to prove Theorem~\ref{thm:uniq3}, we need the following auxiliary lemma, which is valid for $n$ not necessarily equal $3$.

\begin{lem} \label{lem:Pconst}
If $\mathcal L, \tilde{\mathcal L} \in W$, $\Lambda = \tilde \Lambda$ and $\mathcal N(\la_0) = \tilde{\mathcal N}(\la_0)$ for all $\la_0 \in \Lambda$, then the matrix of spectral mappings $\mathcal P(x, \la)$ defined by \eqref{defP} does not depend on $\la$.
\end{lem}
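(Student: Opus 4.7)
The plan is to read off the conclusion from the contour-integration identity \eqref{contD} once we observe that all the ``difference'' terms $\hat{\mathcal N}(\la_0) := \mathcal N(\la_0) - \tilde{\mathcal N}(\la_0)$ vanish under the hypotheses.

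First, since $\Lambda = \tilde \Lambda$, the joint index set $\mathcal I = \Lambda \cup \tilde \Lambda$ equals $\Lambda$, and the assumption $\mathcal N(\la_0) = \tilde{\mathcal N}(\la_0)$ for every $\la_0 \in \Lambda$ gives $\hat{\mathcal N}(\la_0) = 0$ for every $\la_0 \in \mathcal I$. Therefore Lemma~\ref{lem:cont}, relation \eqref{contD}, collapses to
\[
D(x, \mu, \la) = \tilde D(x, \mu, \la), \qquad \la,\mu \in \mathbb C \setminus \mathcal I.
\]

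Second, I unwind the definitions \eqref{defD} of $D$ and $\tilde D$. The identity above is equivalent (after multiplying by $(\la - \mu)$) to
\[
[\Phi(x, \mu)]^{-1} \Phi(x, \la) = [\tilde \Phi(x, \mu)]^{-1} \tilde \Phi(x, \la).
\]
Left-multiplying by $\Phi(x, \mu)$ and right-multiplying by $[\tilde \Phi(x, \la)]^{-1}$ yields
\[
\Phi(x, \la) [\tilde \Phi(x, \la)]^{-1} = \Phi(x, \mu) [\tilde \Phi(x, \mu)]^{-1},
\]
i.e.\ $\mathcal P(x, \la) = \mathcal P(x, \mu)$ for all admissible $\la, \mu$. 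Since $\mathcal P(x, \cdot)$ is meromorphic in $\la$ with possible poles only in $\mathcal I$, this equality extends by analytic continuation to the full $\la$-plane, proving that $\mathcal P(x, \la)$ is independent of $\la$ for each fixed $x \in [0,1]$.

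There is essentially no obstacle here: all the analytic work has already been done in Lemma~\ref{lem:cont}, and the present lemma is the direct ``degenerate'' case of the contour-integration formula when the two spectral data sets coincide. The only minor point to keep in mind is that one must check $\mathcal P(x, \la)$ has no poles outside $\mathcal I$ (which follows from the structure $\mathcal P = \Phi \cdot \tilde \Phi^{-1}$ together with \eqref{relP} and the fact that both $\mathcal L, \tilde{\mathcal L} \in W$ have simple poles at the points of $\Lambda = \tilde\Lambda$), so that the equality on $\mathbb C \setminus \mathcal I$ really does extend everywhere.
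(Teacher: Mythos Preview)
Your argument is correct but follows a different route than the paper. The paper's proof works directly with the factorization $\mathcal P(x,\la) = \Phi(x,\la) J_0^{-1}[\tilde\Phi^{\star}(x,\la)]^T J$ coming from \eqref{PJP}, computes the Laurent coefficients $\mathcal P_{\langle -2\rangle}(x,\la_0)$ and $\mathcal P_{\langle -1\rangle}(x,\la_0)$ at each $\la_0 \in \Lambda$ using the identities \eqref{relN1}, \eqref{relNPhi} from Lemma~\ref{lem:N1}, shows both vanish, and then invokes the asymptotics \eqref{asymptP} together with Liouville's theorem to conclude $\mathcal P$ is constant in $\la$.

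Your approach instead treats the lemma as the degenerate case of the contour-integration identity \eqref{contD}: when $\hat{\mathcal N}(\la_0)\equiv 0$ the right-hand side vanishes, giving $D=\tilde D$, which unwinds to $\mathcal P(x,\la)=\mathcal P(x,\mu)$ on $\mathbb C\setminus\mathcal I$. This is slicker once Lemma~\ref{lem:cont} is in hand, since all the residue and asymptotic analysis has already been absorbed there; the paper's proof is more self-contained and makes the pole-cancellation mechanism visible without referring back to the full contour-integration machinery. Note also that your closing caveat about poles outside $\mathcal I$ is unnecessary: a meromorphic function that is constant on the connected open set $\mathbb C\setminus\mathcal I$ is automatically constant everywhere, so any a priori poles in $\mathcal I$ are removable without further checking.
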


\begin{proof}
It follows from \eqref{defP} and \eqref{PJP} that
$$
\mathcal P(x, \la) = \Phi(x, \la) J_0^{-1} [\tilde \Phi^{\star}(x, \la)]^T J.
$$
Using \eqref{relN1} and \eqref{relNPhi}, we derive for $\la_0 \in \Lambda$:
\begin{align*}
\mathcal P_{\langle -2 \rangle}(x, \la) J^{-1} & = 
\Phi_{\langle -1 \rangle}(x, \la_0) J_0^{-1} [\tilde \Phi_{\langle -1 \rangle}^{\star}(x, \la_0)]^T \\ & = 
\Phi_{\langle 0 \rangle}(x, \la_0) \mathcal N(\la_0) J_0^{-1} [\mathcal N^*(\la_0)]^T[\tilde \Phi_{\langle 0 \rangle}^{\star}(x, \la_0)]^T = 0, \\
\mathcal P_{\langle -1 \rangle}(x, \la) J^{-1} & = \Phi_{\langle -1 \rangle}(x, \la_0) J_0^{-1} [\tilde \Phi_{\langle 0 \rangle}^{\star}(x, \la_0)]^T + 
\Phi_{\langle 0 \rangle}(x, \la_0) J_0^{-1} [\tilde \Phi_{\langle -1 \rangle}^{\star}(x, \la_0)]^T \\ & = \Phi_{\langle 0 \rangle}(x, \la_0)  (\mathcal N(\la_0) J_0^{-1} + J_0^{-1} [\mathcal N^{\star}(\la_0)]^T)  [\tilde \Phi_{\langle 0 \rangle}^{\star}(x, \la_0)]^T = 0.
\end{align*}
Hence, $\mathcal P(x, \la)$ is entire in $\la$. Using the asymptotics \eqref{asymptP} and Liouville's theorem, we conclude that $\mathcal P(x,\la) \equiv \mathcal P(x)$, $x \in [0,1]$.
\end{proof}

\begin{proof}[Proof of Theorem~\ref{thm:uniq3}]
This proof is similar to the proof of Theorem~2 in \cite{Bond21}, so we outline it briefly. By Lemma~\ref{lem:Pconst}, $\mathcal P(x, \la) \equiv \mathcal P(x)$. Furthermore, $\mathcal P(x)$ is a unit lower-triangular matrix.
One can easily show that
\begin{equation} \label{PF}
\mathcal P'(x) + \mathcal P(x) \tilde F(x) = F(x) \mathcal P(x), \quad x\in (0,1),
\end{equation}
where the matrix functions $F(x)$ and $\tilde F(x)$ have the form \eqref{F3}. In the element-wise form, \eqref{PF} implies $\mathcal P_{2,1} = \mathcal P_{3,2} = \mathcal P_{3,1}' = 0$, $\mathcal P_{3,1} = \hat \sigma_0 \pm \hat \tau_1$. Hence, $\hat \tau_1 = 0$, $\hat \sigma_0 = const$ in $L_2(0,1)$, which concludes the proof.
\end{proof}

Now suppose that the spectral data $\{ \la_0, \mathcal N(\la_0) \}_{\la_0 \in \Lambda}$ of the problem $\mathcal L = (F(x), U_0, U_1)$ are given. Using the asymptotics \eqref{asymptla3}, one can find the number $\tilde \tau_1 := \int_0^1 \tau_1(t) \, dt$. Put 
\begin{equation} \label{F3t}
\tilde F(x) = \begin{bmatrix}
            0 & 1 & 0 \\
            -\tilde \tau_1 & 0 & 1 \\
            0 & - \tilde \tau_1 & 0
        \end{bmatrix},
\end{equation}
and $\tilde{\mathcal L} := (\tilde F(x), U_0, U_1)$. Clearly, $\tilde F^{\star}(x) = \tilde F(x)$. Consequently, in our case,
$$
\langle \tilde g_v, \tilde \phi \rangle = \tilde g''_v \tilde \phi - \tilde g_v' \tilde \phi' + \tilde g_v \tilde \phi'' + 2 \tilde \tau_1 \tilde g_v \tilde \phi.
$$
Hence, the relation \eqref{sml5} takes the form
\begin{multline*}
    T_{0,0} \tilde \phi'' - T_{0,1} \tilde \phi' + (T_{0,2} + 2 \tilde \tau_1 T_{0,0}) \tilde \phi \\ = 
    \hat \tau_1 \tilde \phi' + (\hat \tau_1' + \hat \tau_0) \tilde \phi + T_{0,0} \tilde \phi'' + (3 T_{1,0} + 2 T_{0,1}) \tilde \phi' + (3 T_{2,0} + 3 T_{1,1} + T_{0,2} + 2 \tau_1 T_{0,0}) \tilde \phi,
\end{multline*}
where $T_{j_1,j_2}$ were defined in \eqref{deftT}. 
Grouping the terms at $\tilde \phi'(x, \la)$ and $\tilde \phi(x, \la)$, we derive the formulas 
\begin{align*} 
\tau_1 & = \tilde \tau_1 -\frac{3}{2} \sum_{v \in V'} (\phi_v' c_v \tilde g_v + \phi_v c_v \tilde g_v'),\\ 
\tau_0 & = -\hat \tau_1' - 3 \frac{d}{dx}\left( \sum_{v \in V'} \phi_v' c_v \tilde g_v \right) - 2 \hat \tau_1 \sum_{v \in V'} \phi_v c_v \tilde g_v.
\end{align*}

By virtue of Corollary~1.3 and Theorem~6.4 from \cite{Bond22-asympt} and \eqref{relxi}, we have $\{ l \xi_l \} \in l_2$. Applying Lemma~\ref{lem:series} to prove the series convergence in suitable spaces and using the notations \eqref{defeta}, we arrive at the following reconstruction formulas for $\tau_1$ and $\tau_0$.

\begin{thm}
Let $\mathcal L$ and $\tilde{\mathcal L}$ be the problems defined above in this section. The following relations hold:
\begin{align} \label{rectau1}
    \tau_1 & = \tilde \tau_1 -\frac{3}{2} \sum_{(l,k,\eps) \in V} (\vv'_{l,k,\eps} \tilde \eta_{l,k,\eps} + \vv_{l,k,\eps} \tilde \eta'_{l,k,\eps}), \\ \label{rectau0}
\tau_0 & = -\hat \tau_1' - 3 \frac{d}{dx}\left( \sum_{(l,k,\eps)\in V} \vv'_{l,k,\eps} \tilde \eta_{l,k,\eps} \right) - 2 \hat \tau_1 \sum_{(l,k,\eps)\in V} \vv_{l,k,\eps} \tilde \eta_{l,k,\eps}.
\end{align}
The series in \eqref{rectau1} converges in $L_2(0,1)$. In \eqref{rectau0}, the series in brackets converges in $L_2(0,1)$ with regularization, and the second series converges absolutely and uniformly with respect to $x \in [0,1]$, so the right-hand side of \eqref{rectau0} belongs to $W_2^{-1}(0,1)$.
\end{thm}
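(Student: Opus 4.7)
The plan is to specialize the general relation \eqref{sml5} to $n=3$, match coefficients at $\tilde\phi''$, $\tilde\phi'$, $\tilde\phi$, and then analyze convergence of the resulting series via Lemma~\ref{lem:series}. Since $\tilde F^\star(x) = \tilde F(x)$ equals the constant-coefficient matrix \eqref{F3t}, the $\tilde{\mathcal L}$-quasi-derivatives reduce to $\tilde y^{[1]} = \tilde y'$, $\tilde y^{[2]} = \tilde y'' + \tilde\tau_1 \tilde y$. A direct computation yields
$$
\langle \tilde g_v, \tilde\phi\rangle = \tilde g_v'' \tilde\phi - \tilde g_v' \tilde\phi' + \tilde g_v \tilde\phi'' + 2\tilde\tau_1 \tilde g_v \tilde\phi,
$$
while \eqref{defps} gives $\hat\ell_3(\tilde\phi) = 2\hat\tau_1 \tilde\phi' + (\hat\tau_1' + \hat\tau_0) \tilde\phi$. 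Substituting these together with \eqref{sml3} into \eqref{sml5}, the coefficient of $\tilde\phi''$ cancels automatically, the coefficient of $\tilde\phi'$ forces $\tau_1 - \tilde\tau_1 = -\tfrac{3}{2}(T_{1,0}+T_{0,1})$, and the coefficient of $\tilde\phi$ forces $\tau_0 = -\hat\tau_1' - 3(T_{2,0}+T_{1,1}) - 2\hat\tau_1 T_{0,0}$, where $T_{2,0}+T_{1,1} = \tfrac{d}{dx}T_{1,0}$ by the Leibniz rule.

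Next, I would convert each sum $T_{j_1,j_2}$ over $V'$ into the corresponding sum over $V$ appearing in \eqref{rectau1}--\eqref{rectau0}. By Corollary~\ref{cor:N}, for large $|\la_0|$ the only nonzero entry of $\mathcal N_{\eps}(\la_0)$ lies at position $(k+1,k)$; for the finitely many exceptional $\la_0$ that belong to several $\Lambda_k$, Lemma~\ref{lem:N2} still concentrates all nonzero entries of $\mathcal N_{\eps}$ in those same positions. Using the projector identity $\mathcal N_{\eps}(\la_v) = \sum_{k=1}^{n-1} e_{k+1}e_{k+1}^T \mathcal N_{\eps}(\la_v)$ together with definition \eqref{defeta}, one obtains term-by-term
$$
\sum_{v \in V'} \phi_v^{(j_1)}(x)\, c_v\, \tilde g_v^{(j_2)}(x) \;=\; \sum_{(l,k,\eps) \in V} \vv^{(j_1)}_{l,k,\eps}(x)\, \tilde\eta^{(j_2)}_{l,k,\eps}(x),
$$
for $j_1,j_2 \in \{0,1\}$, which produces the series in \eqref{rectau1}--\eqref{rectau0}.

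For convergence, I would use that $\{l\xi_l\} \in l_2$ (from \cite[Cor.~1.3 and Thm.~6.4]{Bond22-asympt} combined with \eqref{relxi}). For $\tau_1$: taken separately, each of $T_{1,0}$ and $T_{0,1}$ converges in $L_2(0,1)$ only after subtracting regularizing constants $A_v$ (apply Lemma~\ref{lem:series} with $j_1+j_2=1$), but inspecting the explicit form of these constants in the proof of Lemma~\ref{lem:series} — the diagonal ($s_1=s_2=s$) contributions come with factors $\om_s\rho$ for $(j_1,j_2)=(1,0)$ and $-\om_s\rho$ for $(j_1,j_2)=(0,1)$ — the two constants cancel term by term, so $T_{1,0}+T_{0,1} = \sum_v (\phi_v c_v \tilde g_v)'$ converges in $L_2(0,1)$ without regularization, giving \eqref{rectau1}. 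For $\tau_0$: Lemma~\ref{lem:series} gives the inner sum in \eqref{rectau0} (which is $T_{1,0}$) as an $L_2$-convergent series up to additive constants, and applying $\tfrac{d}{dx}$ annihilates those constants and produces a distribution in $W_2^{-1}(0,1)$; for the outer sum $T_{0,0}$, Cauchy-Schwarz yields $\{\xi_l\} \in l_1$, so Lemma~\ref{lem:series} gives absolute uniform convergence on $[0,1]$; multiplication by $\hat\tau_1 \in L_2(0,1)$ stays in $L_2 \subset W_2^{-1}$, and $-\hat\tau_1' \in W_2^{-1}$, so the total belongs to $W_2^{-1}(0,1)$.

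The principal obstacle I expect is the cancellation-of-constants argument for \eqref{rectau1}: it requires tracking the asymptotic leading constants $\al_{l,k,s_1,s_2,\eps}$ from the proof of Lemma~\ref{lem:series} through the two cases $(j_1,j_2) \in \{(1,0),(0,1)\}$ and verifying that, on the diagonal $s_1 = s_2$, the contributions coming from the difference $\al_{l,k,s,s,0}-\al_{l,k,s,s,1}$ exactly negate each other once both series are added. Everything else is bookkeeping: matching the $\tilde\phi^{(s)}$ coefficients, the projector splitting that converts $V'$-sums to $V$-sums, and the routine membership arguments that place each piece of $\tau_0$ in $W_2^{-1}(0,1)$.
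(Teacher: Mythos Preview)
Your proposal is correct and follows essentially the same route as the paper: specialize \eqref{sml5} to $n=3$ with the constant-coefficient model $\tilde{\mathcal L}$, match the coefficients of $\tilde\phi''$, $\tilde\phi'$, $\tilde\phi$ to obtain $\hat\tau_1 = -\tfrac{3}{2}(T_{1,0}+T_{0,1})$ and $\tau_0 = -\hat\tau_1' - 3(T_{1,0})' - 2\hat\tau_1 T_{0,0}$, and then invoke $\{l\xi_l\}\in l_2$ together with Lemma~\ref{lem:series} for the convergence claims. Your observation that the regularization constants for $T_{1,0}$ and $T_{0,1}$ cancel because the diagonal factors $(\om_s)^{j_1}(-\om_s)^{j_2}$ are $\om_s$ and $-\om_s$ respectively is exactly what the paper means by ``following the proof of Lemma~\ref{lem:series}, one can easily show that the regularization constants $A_v$ for the series in \eqref{rectau1} equal zero,'' and your Cauchy--Schwarz step $\{l\xi_l\}\in l_2 \Rightarrow \{\xi_l\}\in l_1$ is precisely what is needed to apply the second clause of Lemma~\ref{lem:series} to $T_{0,0}$.

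One small imprecision: your sentence ``Lemma~\ref{lem:N2} still concentrates all nonzero entries of $\mathcal N_{\eps}$ in those same positions'' overstates what that lemma gives for the finitely many exceptional eigenvalues. The projector identity $\mathcal N_{\eps}(\la_v)=\sum_{k=1}^{n-1} e_{k+1}e_{k+1}^T\mathcal N_{\eps}(\la_v)$ you actually use, however, needs only that the first row of $\mathcal N_{\eps}$ vanishes (strict lower-triangularity), so the argument goes through regardless.
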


Following the proof of Lemma~\ref{lem:series}, one
can easily show that the regularization constants $A_v$ for the series in \eqref{rectau1} equal zero.
The regularization constants in \eqref{rectau0} are omitted because of the differentiation. Finally, we arrive at the following algorithm for solving Problem~\ref{prob:sd-coef}.

\begin{alg}
Suppose that the spectral data $\{ \la_0, \mathcal N(\la_0) \}_{\la_0 \in \Lambda}$ of the problem $\mathcal L = \mathcal L(F(x), U_0, U_1) \in W$ are given. Here $F(x)$ is defined by \eqref{F3}, $U_0 = I$, $U_1 = [\de_{k,4-j}]_{k,j = 1}^3$. We have to find $\tau_1$ and $\tau_0$.

\begin{enumerate}
\item Find $\tilde \tau_1 = \int_0^1 \tau_1(x) \,dx$ from the eigenvalue asymptotics \eqref{asymptla3}.
\item Take the model problem $\tilde {\mathcal L} = \mathcal L(\tilde F(x), U_0, U_1)$, where $\tilde F(x)$ is defined by \eqref{F3t}.
\item Implement the steps 2--6 of Algorithm~\ref{alg:1} to obtain $\{ \vv_{l,k,\eps}(x) \}_{(l,k,\eps) \in V}$.
\item Using the problem $\tilde {\mathcal L}$ and the spectral data $\{ \la_0, \mathcal N(\la_0) \}_{\la_0 \in \Lambda}$, $\{ \tilde \la_0, \tilde{\mathcal N}(\tilde \la_0) \}_{\tilde \la_0 \in \tilde \Lambda}$, construct the functions $\{ \tilde \eta_{l,k,\eps}(x) \}_{(l,k,\eps) \in V}$ by \eqref{defeta}.
\item Construct $\tau_1(x)$ and $\tau_0(x)$ by \eqref{rectau1} and \eqref{rectau0}, respectively.
\end{enumerate}
\end{alg}

\subsection{Case of even $n$, $\tau_{\nu} \in L_2(0,1)$.} \label{sec:evenL}

Consider the differential expression \eqref{defl} with even $n$ and $\tau_{\nu} \in L_2(0,1)$, $\nu = \overline{0, n-2}$. The associated matrix $F(x) = [f_{k,j}(x)]_{k,j = 1}^n$ is given by the relations
\begin{align*}
& f_{n-k, k+1} = -\tau_{2k}, \quad k = \overline{0,\lfloor n/2\rfloor-1}, \\
& f_{n-k-1,k+1} = f_{n-k,k+2} = -\tau_{2k+1}, \quad k = \overline{0,\lfloor n/2 \rfloor - 2},
\end{align*}
and all the other elements are defined by $f_{k,j} = \de_{k,j-1}$. For instance, 
$$
\ell_6(y) = y^{(6)} + (\tau_4 y'')'' + [(\tau_3 y'')' + (\tau_3 y')''] + (\tau_2 y')' + [(\tau_1 y)' + \tau_1 y'] + \tau_0 y,
$$
and the corresponding associated matrix is
$$
F(x) = \begin{bmatrix}
            0 & 1 & 0 & 0 & 0 & 0 \\
            0 & 0 & 1 & 0 & 0 & 0 \\
            0 & 0 & 0 & 1 & 0 & 0 \\
            0 & -\tau_3 & -\tau_4 & 0 & 1 & 0 \\
            -\tau_1 & -\tau_2 & -\tau_3 & 0 & 0 & 1 \\
            -\tau_0 & -\tau_1 & 0 & 0 & 0 & 0 
        \end{bmatrix}.
$$

Suppose that $U_0 = I$, $U_1 = [\de_{k,n-j+1}]_{k,j = 1}^n$, $\mathcal L = (F(x), U_0, U_1) \in W$ and $\tilde{\mathcal L} = (\tilde F(x), U_0, U_1)$, where $\tilde F(x)$ is constructed in the same way as $F(x)$ by different coefficients $\tilde \tau_{\nu} \in L_2(0,1)$, $\nu = \overline{0, n-2}$. The following uniqueness theorem is proved similarly to Theorem~\ref{thm:uniq3}.

\begin{thm}
If $\Lambda = \tilde \Lambda$ and $\mathcal N(\la_0) = \tilde {\mathcal N}(\la_0)$ for all $\la_0 \in \Lambda$, then $\tau_{\nu}(x) = \tilde \tau_{\nu}(x)$ a.e. on $(0,1)$, $\nu = \overline{0,n-2}$. Thus, the spectral data $\{ \la_0, \mathcal N(\la_0) \}_{\la_0 \in \Lambda}$ uniquely specify $\tau_{\nu} \in L_2(0,1)$, $\nu = \overline{0,n-2}$.
\end{thm}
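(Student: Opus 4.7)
The plan is to follow the proof of Theorem~\ref{thm:uniq3}, with the bookkeeping adjusted to the block structure of the associated matrix in the even-order regular case. First, under the hypotheses $\Lambda = \tilde\Lambda$ and $\mathcal N(\la_0) = \tilde{\mathcal N}(\la_0)$, Lemma~\ref{lem:Pconst} gives that the matrix of spectral mappings is independent of the spectral parameter: $\mathcal P(x, \la) \equiv \mathcal P(x)$. The asymptotic relation \eqref{asymptP}, $\mathcal P(x,\la) = \Theta(\rho)(I + o(1))[\Theta(\rho)]^{-1}$ as $|\la|\to\infty$, then forces $\mathcal P(x)$ to be unit lower triangular, since otherwise the entries $\mathcal P_{k,j}$ with $k<j$ would grow like $\rho^{k-j}\to 0$ in a way inconsistent with $\la$-independence, while the diagonal entries must equal $1$.

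Second, from $\Phi(x,\la) = \mathcal P(x)\tilde\Phi(x,\la)$ together with the first-order systems \eqref{sys} satisfied by $\Phi$ (with matrix $F$) and $\tilde\Phi$ (with matrix $\tilde F$), one derives
\begin{equation*}
\mathcal P'(x) + \mathcal P(x)\bigl(\tilde F(x) + \Lambda\bigr) = \bigl(F(x) + \Lambda\bigr)\mathcal P(x).
\end{equation*}
Since $\mathcal P(x)$ does not depend on $\la$, collecting the $\la$-linear part yields $\mathcal P(x)\Lambda = \Lambda \mathcal P(x)$, which is already compatible with $\mathcal P(x)$ being unit lower triangular. The $\la$-independent part gives the fundamental relation
\begin{equation*}
\mathcal P'(x) = F(x)\mathcal P(x) - \mathcal P(x)\tilde F(x). \qquad (\ast)
\end{equation*}

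Third, one analyzes $(\ast)$ entry by entry. The strategy is to proceed along the subdiagonals of $\mathcal P(x)$ moving away from the main diagonal. On each subdiagonal, the contribution from the companion-type part of $F$ and $\tilde F$ (the $1$'s immediately above the diagonal) relates $\mathcal P'_{k,j}$ to $\mathcal P_{k+1,j}$ and $\mathcal P_{k,j-1}$, while the $\hat F := F - \tilde F$ terms (which live only at positions where some $\tau_\nu$ sits) feed in as forcing. Starting from the positions closest to the main diagonal and working downward, one shows inductively that each $\mathcal P_{k,j}(x)$ below the diagonal is forced to vanish, and that the corresponding entries of $\hat F$ vanish as $L_2$ functions, yielding $\tau_{n-2} = \tilde\tau_{n-2}$, $\tau_{n-3}=\tilde\tau_{n-3}$, \ldots, $\tau_0 = \tilde\tau_0$ in $L_2(0,1)$. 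Unlike the $n=3$ case of Theorem~\ref{thm:uniq3}, no constant ambiguity arises here because the $\tau_\nu$ appear directly in $F$ (rather than through a primitive $\sigma$).

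The main obstacle is organizing the induction: for general even $n$, the positions of the $\tau_\nu$'s in $F$ are concentrated around the anti-diagonal block, so one must pair each subdiagonal equation in $(\ast)$ with the correct $\hat\tau_\nu$ and verify that the off-diagonal entries of $\mathcal P$ encountered up to that step are already known to vanish. This is pure linear algebra but requires careful indexing; the analogous case of odd $n$ would require the same argument with a minor adjustment to the indices.
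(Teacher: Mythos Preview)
Your proposal is correct and follows exactly the approach the paper indicates: the paper states that this theorem ``is proved similarly to Theorem~\ref{thm:uniq3}'' without further detail, and your argument (Lemma~\ref{lem:Pconst} to get $\mathcal P(x,\la)\equiv\mathcal P(x)$, unit lower-triangularity from the asymptotics, then the relation $\mathcal P' + \mathcal P\tilde F = F\mathcal P$ analyzed entry-wise along subdiagonals) is precisely that adaptation. Your remark that no additive constant ambiguity arises here because the $\tau_\nu$ enter $F$ directly rather than through antiderivatives is the correct distinction from the $n=3$ distributional case.
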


Further, we need the following proposition, which is an immediate corollary of Theorems~1.2 and~6.4 from \cite{Bond22-asympt} for the problems $\mathcal L$, $\tilde{\mathcal L}$ defined above in this subsection and the sequence $\{ \xi_l \}$ defined by \eqref{defxi} (see also Example~5.2 in \cite{Bond22-asympt}).

\begin{prop}[\cite{Bond22-asympt}] \label{prop:difL}
Suppose that $\nu_0 \in \{ 1, 2, \ldots, n-1 \}$, $\tau_{\nu}(x) = \tilde \tau_{\nu}(x)$ a.e. on $(0,1)$ for $\nu = \overline{\nu_0, n-2}$, and $\int_0^1 \hat \tau_{\nu_0-1}(x) \, dx = 0$. Then $\{ l^{n - \nu_0} \xi_l \} \in l_2$.
\end{prop}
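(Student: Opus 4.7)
The plan is to read this off directly from the refined asymptotic formulas in \cite{Bond22-asympt}, term-by-term, after replacing $\mathcal N_{j,k}(\la_{l,k})$ by the weight numbers $\beta_{l,k}$ for sufficiently large $l$ via Corollary~\ref{cor:N}. Under that reduction, it suffices to estimate each summand $(|\la_{l,k}-\tilde\la_{l,k}| + |\beta_{l,k}-\tilde\beta_{l,k}|\, l^{p_{k,0}-p_{k+1,0}})\, l^{1-n}$ in \eqref{relxi} and check that after multiplying by $l^{n-\nu_0}$ the resulting sequence lies in $l_2$.

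First, I would recall the exact shape of Theorem~1.2 from \cite{Bond22-asympt}: it refines \eqref{asymptla} to an expansion of the form
\[
\la_{l,k} = \Bigl(\tfrac{\pi}{\sin(\pi k/n)}\Bigr)^n (-1)^{n-k}\Bigl[(l+\chi_k)^n + \sum_{r=1}^{n-1} a_{r,k}[\tau]\,(l+\chi_k)^{n-r} + (l+\chi_k)^{n-r}\varkappa_{l,k,r}\Bigr],
\]
where the coefficient $a_{r,k}[\tau]$ is a linear combination of $\int_0^1 \tau_{\nu}$ whose dominant contributor has index $\nu = n-1-r$, and $\{\varkappa_{l,k,r}\} \in l_2$. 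The crucial feature is that $a_{r,k}[\tau]$ depends only on $\tau_{n-2}, \tau_{n-3}, \ldots, \tau_{n-1-r}$ (plus, at the top order, the average of $\tau_{n-1-r}$). Under the hypotheses $\tau_\nu = \tilde\tau_\nu$ for $\nu \ge \nu_0$ and $\int_0^1 \hat\tau_{\nu_0-1} = 0$, the coefficients $a_{r,k}[\tau]$ and $a_{r,k}[\tilde\tau]$ coincide for $r = 1, \ldots, n-\nu_0$. Consequently $\la_{l,k} - \tilde\la_{l,k}$ has no $l^{n-r}$ terms for those $r$, so it has size $l^{n-1} \cdot l^{-(n-\nu_0)}\varkappa^{\star}_{l,k}$ with $\{\varkappa^{\star}_{l,k}\} \in l_2$, i.e. $l^{1-\nu_0}|\la_{l,k}-\tilde\la_{l,k}| \in l_2$.

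Second, I would run the parallel argument for the weight numbers using Theorem~6.4 from \cite{Bond22-asympt}, which refines \eqref{asymptbe} into an expansion
\[
\beta_{l,k} = l^{n-1+p_{k+1,0}-p_{k,0}}\Bigl[\beta_k^0 + \sum_{r=1}^{n-\nu_0} b_{r,k}[\tau] l^{-r} + l^{-(n-\nu_0)}\varkappa^0_{l,k}\Bigr],
\]
where again the low-order coefficients $b_{r,k}[\tau]$ depend only on $\tau_{n-2},\ldots,\tau_{n-1-r}$. The same matching hypothesis forces $b_{r,k}[\tau] = b_{r,k}[\tilde\tau]$ for $r \le n-\nu_0$, so $|\beta_{l,k} - \tilde\beta_{l,k}|\, l^{p_{k,0}-p_{k+1,0}}$ is $l^{n-1-(n-\nu_0)} = l^{\nu_0-1}$ times an $l_2$ sequence. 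Multiplying by $l^{n-\nu_0}\cdot l^{1-n} = l^{1-\nu_0}$ as in \eqref{relxi} gives $l_2$ exactly.

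Finally, I would combine both estimates, sum over $k = \overline{1,n-1}$ (a finite sum preserves $l_2$), and use Corollary~\ref{cor:N} to handle the finitely many indices where the off-diagonal entries of $\mathcal N(\la_0)$ do not yet reduce to $\beta_{l,k}$; those contribute only finitely many terms and are harmless. The only real obstacle is bookkeeping: one must verify that the scaling $l^{p_{k,0}-p_{k+1,0}}$ in the definition of $\xi_l$ precisely cancels the leading growth $l^{n-1+p_{k+1,0}-p_{k,0}}$ in $\beta_{l,k}$ so that the shared coefficients $b_{r,k}[\tau]$ cancel in the difference $\beta_{l,k}-\tilde\beta_{l,k}$ without leaving a residual polynomial. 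Once this cancellation is confirmed, the $l_2$ conclusion for $\{l^{n-\nu_0}\xi_l\}$ is immediate from \eqref{relxi}.
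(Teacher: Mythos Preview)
Your approach is correct and matches the paper's treatment: the paper does not prove this proposition but states that it ``is an immediate corollary of Theorems~1.2 and~6.4 from \cite{Bond22-asympt}'' (together with Example~5.2 there), which is precisely the route you outline --- reduce to the weight numbers $\beta_{l,k}$ via Corollary~\ref{cor:N} and \eqref{relxi}, then invoke the refined asymptotic expansions of \cite{Bond22-asympt} so that the leading coefficients cancel under the hypotheses on $\hat\tau_\nu$.
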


We will construct the solution of Problem~\ref{prob:sd-coef} step-by-step.

\medskip

\textsc{Step 1.} Take the model problem $\tilde {\mathcal L} = \tilde {\mathcal L}^{(1)} := (\tilde F^{(1)}(x), U_0, U_1)$, where $\tilde F^{(1)}(x)$ is the associated matrix for the differential expression $\tilde l_n^{(1)}(y)$ with the coefficients $\tilde \tau_{n-2} := \int_0^1 \tau_{n-2}(x) \, dx$, $\tilde \tau_\nu := 0$, $\nu = \overline{0,n-3}$. The coefficient $\int_0^1 \tau_{n-2}(x) \, dx$ can be found from the eigenvalue asymptotics similarly to the case of Subsection~\ref{sec:3}. Using the terms of \eqref{sml5} at $\tilde \phi^{(n-2)}(x, \la)$, we derive the reconstruction formula
$$
\tau_{n-2} = \tilde \tau_{n-2} - t_{n,n-2} - T_{0,1} =
\tilde \tau_{n-2} - n \sum_{v \in V'} (\phi_v' c_v \tilde g_v + \phi_v c_v \tilde g_v').
$$
By virtue of Proposition~\ref{prop:difL}, $\{ l \xi_l \} \in l_2$. Therefore, Lemma~\ref{lem:series} implies that the obtained series converges in $L_2(0,1)$ with the regularization constants $A_v = 0$.

\smallskip

\textsc{Step 2.} Take the model problem $\tilde {\mathcal L} = \tilde {\mathcal L}^{(2)} := (\tilde F^{(2)}(x), U_0, U_1)$, where $\tilde F^{(2)}(x)$ is the associated matrix for the differential expression $\tilde l_n^{(2)}(y)$ with the coefficients $\tilde \tau_{n-2} := \tau_{n-2}$, $\tilde \tau_{n-3} := 
\int_0^1 \tau_{n-3}(x) \, dx$, $\tilde \tau_\nu := 0$, $\nu = \overline{0,n-4}$. The coefficient $\int_0^1 \tau_{n-3}(x) \, dx$ can be found from the eigenvalue asymptotics. Using the terms of \eqref{sml5} at $\tilde \phi^{(n-2)}(x, \la)$, we show that $T_{0,0}'(x) = 0$. One can easily show that $T_{0,0}(0) = 0$, so $T_{0,0}(x) \equiv 0$. Consequently, grouping the terms of
\eqref{sml5} at $\tilde \phi^{(n-3)}(x, \la)$, we obtain
\begin{align*}
2 \tau_{n-3} & = 2 \tilde \tau_{n-3} - t_{n,n-3} + T_{0,2} \\
& = 2 \tilde \tau_{n-3} - \sum_{v \in V'} \left( \tfrac{n(n-1)}{2} \phi''_v c_v \tilde g_v + n(n-2) \phi'_v c_v \tilde g_v' + [\tfrac{(n-1)(n-2)}{2}-1] \phi_v c_v g_v''\right).
\end{align*}
By virtue of Proposition~\ref{prop:difL}, $\{ l^2 \xi_l \} \in l_2$. Lemma~\ref{lem:series} implies that the series converges in $L_2(0,1)$ with the zero regularization constants.

\smallskip

\textsc{Step $s$}. Take the model problem $\tilde{\mathcal L} = \tilde {\mathcal L}^{(s)} := (\tilde F^{(s)}(x), U_0, U_1)$, where $\tilde F^{(s)}(x)$ is the associated matrix for the differential expression $\tilde l^{(s)}_n(y)$ with 
\begin{equation} \label{tL}
\tilde \tau_{\nu} := \tau_{\nu}, \: \nu = \overline{n-s,n-2}, \quad \tilde \tau_{n-s-1} := \int_0^1 \tau_{n-s-1}(x) \, dx, \quad \tilde \tau_{\nu} := 0, \: \nu = \overline{0, n-s-2}. 
\end{equation}
For this model problem, we have $T_{j_1, j_2}(x) \equiv 0$ for all $j_1 + j_2 \le s-2$. Grouping the terms of \eqref{sml5} at $\tilde \phi^{(n-s-1)}(x,\la)$, we obtain
\begin{align} \nonumber
    \tau_{n-s-1} & = \tilde \tau_{n-s-1} - (t_{n,n-s-1} + (-1)^{s+1} T_{0,s}) \stackrel{if \: s \: is \: even}  {\times} \tfrac{1}{2} \\ \nonumber
    & = \tilde \tau_{n-s-1} - \sum_{v \in V'} \left(\sum_{r = n-s}^n C_n^r C_{r-1}^{n-s-1} \phi_v^{(n-r)} c_v \tilde g_v^{(r-n+s)} + (-1)^{s+1} \phi_v c_v \tilde g_v^{(s)} \right) \stackrel{if \: s \: is \: even}  {\times} \tfrac{1}{2} \\ \label{rectau}
    & = \tilde \tau_{n-s-1} - \sum_{v \in V'} \left(\sum_{r = n-s}^n C_n^r C_{r-1}^{n-s-1} \phi_v^{[n-r]} c_v \tilde g_v^{[r-n+s]} + (-1)^{s+1} \phi_v c_v \tilde g_v^{[s]} \right) \stackrel{if \: s \: is \: even}  {\times} \tfrac{1}{2}
\end{align}
Proposition~\ref{prop:difL} implies that $\{ l^s \xi_l \} \in l_2$. Therefore, it follows from Lemma~\ref{lem:series} that the series in \eqref{rectau} converges in $L_2(0,1)$. The regularization constants equal zero because
$$
\sum_{r = n-s}^n C_n^r C_{r-1}^{n-s-1} (-1)^r + (-1)^{s+1} = 0.
$$
Note that all functions $\{ \tau_{\nu} \}$ necessary for computation of the quasi-derivatives $\phi_v^{[n-r]}$ in \eqref{rectau} are computed at the previous steps, so the formula \eqref{rectau} can be used for finding $\tau_{n-s-1}$.
In terms of the notations \eqref{defeta}, the relation \eqref{rectau} can be written as follows:
\begin{equation} \label{rec}
\tau_{n-s-1} = \tilde \tau_{n-s-1} - \sum_{(l,k,\eps) \in V} \left(\sum_{r = n-s}^n C_n^r C_{r-1}^{n-s-1} \vv_{l,k,\eps}^{[n-r]} \tilde \eta_{l,k,\eps}^{[r-n+s]} + (-1)^{s+1} \vv_{l,k,\eps} \tilde \eta_{l,k,\eps}^{[s]} \right) \stackrel{if \: s \: is \: even}  {\times} \tfrac{1}{2}
\end{equation}

Thus, we obtain the following algorithm for solving Problem~\ref{prob:sd-coef} in the considered case.

\begin{alg} \label{alg:even}
Suppose that the spectral data $\{ \la_0, \mathcal N(\la_0) \}_{\la_0 \in \Lambda}$ of the problem $\mathcal L = (F(x),U_0,U_1) \in W$ are given. Here $F(x)$ is the matrix associated with the differential expression $\ell_n(y)$, $n$ is even, $\tau_{\nu} \in L_2(0,1)$, $\nu = \overline{0,n-2}$, $U_0 = I$, $U_1 = [\de_{k,n-j+1}]_{k,j=1}^n$. We have to find $\{ \tau_{\nu} \}_{\nu = 0}^{n-2}$. For simplicity, assume that the values $\int_0^1 \tau_{\nu}(x) \, dx$ are known. In fact, they can be found from the eigenvalue asymptotics.

For $s = 1, 2, \ldots, n-1$, we find $\tau_{n-s-1}$ implementing the following steps:

\begin{enumerate}
    \item Take the model problem $\tilde {\mathcal L} = \tilde {\mathcal L}^{(s)} = (\tilde F^{(s)}, U_0, U_1)$ induced by the differential expression $\tilde \ell^{(s)}_n(y)$ with the coefficients \eqref{tL}.
    \item Implement steps 2--6 of Algorithm~\ref{alg:1} to find $\{ \vv_{l,k,\eps}(x) \}_{(l,k,\eps) \in V}$.
    \item Using the problem $\tilde {\mathcal L}$ and the spectral data $\{ \la_0, \mathcal N(\la_0) \}_{\la_0 \in \Lambda}$, $\{ \tilde \la_0, \tilde{\mathcal N}(\tilde \la_0) \}_{\tilde \la_0 \in \tilde \Lambda}$, construct the functions $\{ \tilde \eta_{l,k,\eps}(x) \}_{(l,k,\eps) \in V}$ by \eqref{defeta}.
    \item Construct $\tau_{n-s-1}(x)$ by \eqref{rec}.
\end{enumerate}
\end{alg}

\subsection{Case of even $n$, $\tau_{\nu} \in W_2^{-1}(0,1)$} \label{sec:evenW}

Suppose that $n$ is even and $\tau_{\nu} \in W_2^{-1}(0,1)$ in \eqref{defl} for $\nu = \overline{0, n-2}$, that is, $\tau_{\nu} = \sigma_{\nu}'$, where $\sigma_{\nu} \in L_2(0,1)$ and the derivative is understood in the sense of distributions. Put $m := \lfloor n/2 \rfloor$ and define the matrix function
$$
Q(x) = [q_{r,j}(x)]_{r,j = 0}^m := \sum_{\nu = 0}^{n-2} (-1)^{\lfloor (\nu - 1)/2\rfloor} \chi_{\nu} \sigma_{\nu}(x), 
$$
where $\chi_{\nu} := [\chi_{\nu;r,j}]_{r,j = 0}^m$,
$$
\chi_{2k;k,k+1} = \chi_{2k;k+1,k} = 1, \quad
\chi_{2k+1;k,k+2} = -\chi_{2k+1;k+2,k} = 1,
$$
and all the other entries $\chi_{\nu;r,j}$ equal zero. The associated matrix $F(x) = [f_{k,j}(x)]_{k,j = 1}^n$ for $\ell_n(y)$ is defined as follows (see \cite{Bond22} for details):
\begin{gather*}
f_{m,j} := (-1)^{m+1} q_{j-1,m}, \: j = \overline{1, m}, \qquad
f_{k,m+1} := (-1)^{k+1} q_{m,2m-k}, \: k = \overline{m+1, 2m}, \\
f_{k,j} := (-1)^{k+1} q_{j-1,2m-k} + (-1)^{m+k} q_{j-1,m} q_{m,2m-k}, \quad k = \overline{m+1,2m}, \, j = \overline{1,m},
\end{gather*}  
and $f_{k,j} = \de_{k,j-1}$ for all the other indices. Clearly, $F \in \mathfrak F_n$. For example, if $n = 4$, then
$$
Q(x) = \begin{bmatrix}
        0 & -\sigma_0 & \sigma_1 \\
        -\sigma_0 & 0 & \sigma_2 \\
        -\sigma_1 & \sigma_2 & 0
      \end{bmatrix}, \quad
F(x) = \begin{bmatrix}
        0 & 1 & 0 & 0 \\
        -\sigma_1 & -\sigma_2 & 1 & 0 \\
        -\sigma_0 - \sigma_1 \sigma_2 & -\sigma_2^2 & \sigma_2 & 1 \\
        -\sigma_1^2 & \sigma_0 - \sigma_1 \sigma_2 & \sigma_1 & 0
    \end{bmatrix}
$$

Consider Problem~\ref{prob:sd-coef} for $\mathcal L = (F(x), U_0, U_1)$, $U_0 = I$, $U_1 = [\de_{k,n-j+1}]_{k,j=1}^n$. Let $\tilde{\mathcal L} = (\tilde F(x), U_0, U_1)$, where $\tilde F(x)$ is the associated matrix for the differential expression $\tilde l_n(y)$ with the coefficients
$\tilde \tau_{\nu} = \tilde \sigma_{\nu}' \in W_2^{-1}(0,1)$, $\nu = \overline{0,n-2}$. The following uniqueness theorem is proved analogously to Theorem~\ref{thm:uniq3}.

\begin{thm}
If $\Lambda = \tilde \Lambda$ and $\mathcal N(\la_0) = \tilde{\mathcal N}(\la_0)$ for all $\la_0 \in \Lambda$, then $\sigma_{\nu}(x) = \tilde \sigma_{\nu}(x) + const$ a.e. on $(0,1)$ for $\nu = \overline{0,n-2}$. Thus, the spectral data $\{ \la_0, \mathcal N(\la_0) \}_{\la_0 \in \Lambda}$ uniquely specify $\tau_{\nu} \in W_2^{-1}(0,1)$, $\nu = \overline{0,n-2}$.
\end{thm}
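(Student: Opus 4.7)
The plan is to mimic the proof of Theorem~\ref{thm:uniq3}, since the underlying mechanism (contour integration $\Longrightarrow$ matrix of spectral mappings independent of $\lambda$ $\Longrightarrow$ matrix ODE for the coefficients) is independent of the smoothness class of $\{\tau_\nu\}$. The only part that genuinely changes between Subsection~\ref{sec:3} and the present even-order $W_2^{-1}$ case is the bookkeeping inside the matrix equation $\mathcal P'(x) = F(x)\mathcal P(x) - \mathcal P(x)\tilde F(x)$, because now $F(x)$ depends on $\{\sigma_\nu\}$ nonlinearly (through the products $q_{j-1,m}\,q_{m,2m-k}$).

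First, assuming $\Lambda=\tilde\Lambda$ and $\mathcal N(\lambda_0)=\tilde{\mathcal N}(\lambda_0)$, I apply Lemma~\ref{lem:Pconst} to conclude $\mathcal P(x,\lambda)\equiv\mathcal P(x)$, independent of $\lambda$. Next, letting $\lambda\to\infty$ along $\mathcal H_\delta$, the asymptotics \eqref{asymptP} force $\mathcal P(x)$ to be unit lower-triangular: the off-diagonal upper entries vanish and the diagonal equals $1$. Then, differentiating the identity $\Phi(x,\lambda)=\mathcal P(x)\tilde\Phi(x,\lambda)$ and using that both $\Phi$ and $\tilde\Phi$ satisfy their respective first-order systems \eqref{sys} with matrices $F(x)+\Lambda$ and $\tilde F(x)+\Lambda$, I obtain
\begin{equation*}
\mathcal P'(x) = F(x)\mathcal P(x) - \mathcal P(x)\tilde F(x), \qquad x\in(0,1),
\end{equation*}
where the terms containing $\Lambda$ cancel because $\mathcal P(x)$ is lower-triangular with $(n,1)$-entry commuting properly with the single nonzero entry of $\Lambda$ (this is the standard calculation used in Theorem~2 of \cite{Bond21}).

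Now I would read this matrix equation entry-by-entry along successive superdiagonals/subdiagonals, starting from the top. Above the main diagonal the relation is automatic; on the main diagonal one gets $0 = f_{k,k} - \tilde f_{k,k}$ for indices where $f_{k,k}$ depends linearly on a single $\sigma_\nu$, forcing $\hat\sigma_\nu := \sigma_\nu-\tilde\sigma_\nu = 0$ in $L_2$ for those entries (this handles the $\sigma_\nu$'s of ``middle'' index $\nu = 2k+1$ appearing on the diagonal of $F$). Proceeding to the first subdiagonal, the relation becomes $\mathcal P'_{k+1,k} = \hat f_{k+1,k+1} - \hat f_{k,k} \cdot (\text{lower-triangular stuff}) + \hat f_{k+1,k}$, where $\hat f_{k+1,k}$ is (up to the already-controlled products) a linear combination of some $\hat\sigma_\nu$; integrating this first-order ODE for $\mathcal P_{k+1,k}$ and using the boundary value $\mathcal P_{k+1,k}(0)=0$ coming from $U_0=\tilde U_0=I$ together with the analogous condition at $x=1$ from $U_1=\tilde U_1$, I force $\hat\sigma_\nu = \mathrm{const}$. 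One then iterates down to the $(n,1)$-entry, at which the nonlinear products $q_{j-1,m}q_{m,2m-k}$ appear; but by then the $q$'s involved on upper rows have already been shown to agree up to constants, so the nonlinear terms differ by a function that is (in $W_2^{-1}$) the derivative of an $L_2$ function, and the same integration argument yields $\hat\sigma_\nu = \mathrm{const}$ for the remaining indices.

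The main obstacle I expect is the last paragraph: correctly handling the nonlinear terms $q_{j-1,m}q_{m,2m-k}$ in the distributional class $W_2^{-1}$. Two things make this delicate. First, one must justify that differences $\hat f_{k,j}$ genuinely lie in $W_2^{-1}$ (so that ``$\mathrm{const}$'' is the right notion of triviality), which uses the fact that each $\hat\sigma_\nu$ established at earlier stages is a constant and that multiplying an $L_2$ function by a constant preserves $L_2$. Second, the element-wise induction must be set up so that at each step only one genuinely new $\hat\sigma_\nu$ enters linearly, with all previously encountered quantities already known to be constants; this requires scheduling the subdiagonals in the order dictated by the explicit formulas for $f_{k,j}$ in Subsection~\ref{sec:evenW}, exactly as in \cite[Section 5]{Bond22}. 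Once this ordering is fixed, each stage is a linear first-order ODE with homogeneous boundary data and a right-hand side that is a derivative, forcing the relevant $\hat\sigma_\nu$ to be constant a.e. on $(0,1)$.
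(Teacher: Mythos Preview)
Your approach is correct and is exactly what the paper intends: it states only that the proof is ``analogous to Theorem~\ref{thm:uniq3}'', i.e., Lemma~\ref{lem:Pconst} followed by the entry-wise analysis of \eqref{PF}, and you supply precisely that outline together with a reasonable plan for the nonlinear products $q_{j-1,m}q_{m,2m-k}$. One small correction to your sketch: on the main diagonal the relation is not $0=\hat f_{k,k}$ but $\mathcal P_{k+1,k}-\mathcal P_{k,k-1}=-\hat f_{k,k}$ (the superdiagonal $1$'s of $F,\tilde F$ bring in the first-subdiagonal entries of $\mathcal P$), so the diagonal equations telescope with the subdiagonal ones rather than giving $\hat\sigma_\nu=0$ directly; this does not affect the induction you describe.
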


The functions $\{ \sigma_{\nu} \}_{\nu = 0}^{n-2}$ are specified uniquely up to a constant, so for simplicity we assume that $\int_0^1 \sigma_{\nu}(x) \, dx = 0$, $\nu = \overline{0,n-2}$. 

Theorems~1.2 and~6.4 of \cite{Bond22-asympt} (see also Example~5.3 in \cite{Bond22-asympt}) readily imply the following proposition for the problems $\mathcal L$ and $\tilde{\mathcal L}$ of the considered form and the sequence $\{ \xi_l \}$ defined by \eqref{defxi}.

\begin{prop}[\cite{Bond22-asympt}] \label{prop:difW}
Suppose that $\nu_0 \in \{ 1, 2, \ldots, n-1 \}$ and $\sigma_{\nu}(x) = \tilde \sigma_{\nu}(x)$ a.e. on $(0,1)$ for $\nu = \overline{\nu_0, n-2}$. Then $\{ l^{n - \nu_0 - 1} \xi_l \} \in l_2$.
\end{prop}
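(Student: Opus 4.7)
The plan is to deduce the estimate directly from the sharp asymptotic expansions for $\la_{l,k}$ and $\beta_{l,k}$ established in Theorems~1.2 and~6.4 of \cite{Bond22-asympt}, specialised to the $W_2^{-1}$-regularisation matrix $F(x)$ constructed at the start of this subsection, and then to substitute them into the definition \eqref{relxi} of $\xi_l$. Since the present proposition is asserted as an immediate corollary, the task is essentially verification rather than computation.

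First I would recall from \cite{Bond22-asympt} the structure of those expansions: both $\la_{l,k}$ and $\beta_{l,k}$ are represented as a main part in descending powers of $l$, whose coefficients are explicit integrals/moments of $\sigma_0,\sigma_1,\ldots,\sigma_{n-2}$, plus a remainder of the form $l^{\alpha}\varkappa_l$ with $\{\varkappa_l\}\in l_2$. The combinatorial content of Example~5.3 in \cite{Bond22-asympt}, applied to the $F(x)$ used here, is that for each power $l^{s}$ the coefficient depends only on those $\sigma_{\nu}$ with index $\nu\ge s$ (in the normalisation of this paper); higher-order corrections involve only higher-indexed $\sigma_\nu$'s.

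Second, under the hypothesis $\sigma_\nu = \tilde\sigma_\nu$ a.e.\ for $\nu=\overline{\nu_0,n-2}$, all polynomial coefficients in the expansions at powers $l^{s}$ with $s\ge\nu_0$ therefore agree between $\mathcal L$ and $\tilde{\mathcal L}$, and the subtraction leaves only terms of order $l^{\nu_0-1}$ and lower, plus $l_2$-tails. This gives
\[
|\la_{l,k}-\tilde\la_{l,k}|\le C\,l^{\nu_0-1}+l^{\nu_0-1}\varkappa_l^{(1)},\qquad |\beta_{l,k}-\tilde\beta_{l,k}|\le C\,l^{\nu_0-1+p_{k+1,0}-p_{k,0}}+l^{\nu_0-1+p_{k+1,0}-p_{k,0}}\varkappa_l^{(2)},
\]
with $\{\varkappa_l^{(j)}\}\in l_2$. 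Plugging these into \eqref{relxi} produces
\[
\xi_l\le C(n-1)\,l^{\nu_0-n}+l^{\nu_0-n}\bigl(\varkappa_l^{(1)}+\varkappa_l^{(2)}\bigr),
\]
so $l^{n-\nu_0-1}\xi_l=O(l^{-1})+l^{-1}\bigl(\varkappa_l^{(1)}+\varkappa_l^{(2)}\bigr)$, which is square summable since $\sum l^{-2}<\infty$ and $\{\varkappa_l^{(j)}\}\in l_2$.

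The only step that needs any attention is the bookkeeping: verifying that, in the labelling convention of \cite{Bond22-asympt}, matching $\sigma_\nu$ for $\nu\ge\nu_0$ indeed cancels every polynomial coefficient at powers $l^{s}$ with $s\ge\nu_0$ in both expansions. Once this indexing is matched against Example~5.3 in \cite{Bond22-asympt} (for instance on the explicit case $n=4$ displayed above, where $F(x)$ is written out in terms of $\sigma_0,\sigma_1,\sigma_2$), the general claim follows immediately with no further computation, which is why the author records it as a direct corollary.
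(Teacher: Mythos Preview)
Your proposal is correct and follows exactly the route the paper indicates: the paper does not give a proof but simply records the proposition as an immediate consequence of Theorems~1.2 and~6.4 (together with Example~5.3) of \cite{Bond22-asympt}, and your argument spells out precisely that deduction---subtract the sharp expansions for $\la_{l,k}$ and $\beta_{l,k}$, observe that the hypothesis $\sigma_\nu=\tilde\sigma_\nu$ for $\nu\ge\nu_0$ forces the leading polynomial coefficients to cancel, and insert the remainder into \eqref{relxi}. Your own caveat about checking the index bookkeeping against Example~5.3 is exactly right and is the only place any verification is needed.
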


The algorithm of recovering the coefficients $\{ \tau_{\nu} \}_{\nu = 0}^{n-2}$ from the spectral data is similar to Algorithm~\ref{alg:even}. At \textsc{Step $s$}, we take the model problem $\tilde {\mathcal L} = \tilde {\mathcal L}^{(s)}$ induced by the coefficients $\tilde \sigma_{\nu} := \sigma_{\nu}$, $\nu = \overline{n-s,n-2}$, and $\tilde \sigma_{\nu} := 0$, $\nu = \overline{0, n-s-1}$. Note that the series in \eqref{rectau} has the form
$$
a_0 T_{s,0} + a_1 T_{s-1,1} + \ldots + a_s T_{0,s} = (b_0 T_{s-1,0} + b_1 T_{s-2,1} + \ldots + b_{s-1} T_{0,s-1})',
$$
where
\begin{gather*}
a_j := C_n^{s-j} C_{n-s+j-1}^j \stackrel{if \: j = s}{+} (-1)^{s+1}, \quad \sum_{j = 0}^s a_j = 0, \\
b_j := \sum_{i = 0}^j (-1)^{j-i} a_i, \quad j = \overline{0,s-1}.
\end{gather*}
Using this idea, we derive
\begin{equation} \label{rec2}
\tau_{n-s-1} = -\frac{d}{dx} \sum_{v \in V'} \left( \sum_{j = 0}^{s-1} b_j \phi_v^{[s-j-1]} c_v \tilde g_v^{[j]}\right) \stackrel{if \: s \: is \: even}{\times} \tfrac{1}{2}.
\end{equation}
In view of Proposition~\ref{prop:difW}, we have $\{ l^{s-1} \xi_l \} \in l_2$. Hence, by virtue of Lemma~\ref{lem:series}, the series in \eqref{rec2} converges in $L_2(0,1)$ with some regularization constants. Because of the differentiation, we omit these constants. Thus, formula \eqref{rec2} induces a function of $W_2^{-1}(0,1)$,
and $\sigma_{n-s-1}$ can be found uniquely up to a constant. This constant is chosen so that $\int_0^1 \sigma_{n-s-1}(x) \, dx = 0$. Taking $s = 1, 2, \ldots, n-1$, we step-by-step construct all the coefficients $\tau_{n-2}$, $\tau_{n-3}$, \ldots, $\tau_1$, $\tau_0$.

Note that the algorithms of this section are valid for $\tilde{\mathcal L} \in W$. However, the case $\tilde{\mathcal L} \not\in W$ requires only technical modifications due to Remark~\ref{rem:mult}, which do not influence on the convergence of the series.

\section{Conclusion} \label{sec:concl}

Let us briefly summarize the results of this paper. We have studied the inverse spectral problem which consists in recovering the coefficients $\{ \tau_{\nu}\}_{\nu = 0}^{n-2}$ of the differential expression \eqref{defl} from the spectral data $\{ \la_0, \mathcal N(\la_0)\}_{\la_0 \in \Lambda}$. An approach to constructive solution of the inverse problem is developed. Our approach can be applied to a wide class of differential expressions $\ell_n(y)$, which admit regularization in terms of associated matrix. 

The inverse problem solution consists of the two steps. First, we consider the auxiliary problem of finding the Weyl solutions $\{ \Phi_k(x, \la)\}_{k = 1}^n$ by using the spectral data. This problem is reduced to the linear equation \eqref{main} in the Banach space $m$ of bounded infinite sequences. Theorem~\ref{thm:main} on the unique solvability of the main equation \eqref{main} is proved. Second, by using the solution of the main equation, we derive reconstruction formulas for the coefficients $\{ \tau_{\nu}\}_{\nu = 0}^{n-2}$ and investigate the convergence of resulting series.

\smallskip

Let us mention the most important \textbf{advantages} of our approach:
\begin{enumerate}
	\item The obtained results can be applied to a wide range of differential operators of arbitrary order with either integrable or distributional coefficients of various classes.
	\item Our approach does not require self-adjointness.
	\item Our method is constructive.
	\item The results of this paper can be used for studying existence and stability of the inverse problem solution as well as for developing numerical methods.
\end{enumerate}

\medskip

{\bf Funding.} This work was supported by Grant 21-71-10001 of the Russian Science Foundation, https://rscf.ru/en/project/21-71-10001/.

\medskip

\noindent Natalia Pavlovna Bondarenko \\
1. Department of Applied Mathematics and Physics, Samara National Research University, \\
Moskovskoye Shosse 34, Samara 443086, Russia, \\
2. Department of Mechanics and Mathematics, Saratov State University, \\
Astrakhanskaya 83, Saratov 410012, Russia, \\
e-mail: {\it BondarenkoNP@info.sgu.ru}

\end{document}